\newtheorem{Lem}{Lemma}
\newtheorem{Thm}[Lem]{Theorem}
\newtheorem{Cor}[Lem]{Corollary}
\newtheorem{Prop}[Lem]{Proposition}
\begin{document}

\begin{center}
\textbf{AN OBSTRUCTION TO DECOMPOSABLE EXACT LAGRANGIAN FILLINGS}
\end{center}

\vspace{.01cm}
\begin{center}
\small{WATCHAREEPAN ATIPONRAT}
\end{center}

\vspace{.5cm}
\noindent
    A\scriptsize{BSTRACT.}\normalsize~ We study some properties of decomposable exact Lagrangian cobordisms between Legendrian links in $\mathbb{R}^3$ with the standard contact structure. In particular, for any decomposable exact Lagrangian filling $L$ of a Legendrian link $K$, we may obtain a normal ruling of $K$ associated with $L$. We prove that the associated normal rulings must have even number of clasps. As a result, we give a particular Legendrian $(4,-(2n+5))$-torus knot, for each $n \geq 0$, which does not have a decomposable exact Lagrangian filling because it has only 1 normal ruling and this normal ruling has odd number of clasps.
    
\vspace{1.3cm}
\begin{center}
\normalsize{1. I}\footnotesize{NTRODUCTION}
\end{center}

The standard contact structure on $\mathbb{R}^3$ is the kernel of the smooth 1-form $dz-ydx$ where the coordinate of $\mathbb{R}^3$ is $(x,y,z)$. Alternatively, this is an everywhere non-integrable plane field of the 3-dimensional Euclidean space. In our work, we will focus on Legendrian links which are smooth links in $\mathbb{R}^3$ everywhere tangent to the standard contact structure. For an interested reader, a good introduction for this subject could be found in \cite{Etnyre} and \cite{Geiges}.

In the case of smooth links in $\mathbb{R}^3$, we can consider smooth links as boundaries of surfaces. Similarly, we may consider Legendrian links as boundaries of particular surfaces in $\mathbb{R}^4$. To start with, we look at the symplectization $(\mathbb{R}^4 = \mathbb{R}_{t} \times \mathbb{R}^3 ,d(e^t(dz-ydx)))$ of $\mathbb{R}^3$ (here $t$ is the coordinate coming from the first $\mathbb{R}$ factor), i.e. a symplectic manifold derived from $\mathbb{R}^3$ with the standard contact structure. Next, a surface $L$ in $\mathbb{R}^4$ is called \textbf{Lagrangian} if $d(e^t(dz-ydx))|_L = 0$. In particular, it is \textbf{exact Lagrangian} if the smooth 1-form $e^t(dz-ydx)|_L$ is exact.

Suppose we have two Legendrian links $K_+$ and $K_-$ in $\mathbb{R}^3$. An exact Lagrangian surface $L$ in $\mathbb{R}^4$ is an \textbf{exact Lagrangian cobordism from $K_-$ to $K_+$} if there exist $T > 0$ such that the following holds:

(1) $L \cap ((-\infty,-T]\times \mathbb{R}^3)=(-\infty,-T]\times K_-$;

(2) $L \cap ([T,\infty)\times \mathbb{R}^3)=[T,\infty)\times K_+$;

(3) $L \cap ([-T,T]\times \mathbb{R}^3)$ is compact with boundary $K_{+} \cup K_{-}$; and

(4) $f|_{L \cap ((-\infty,-T)\times \mathbb{R}^3)}$ and $f|_{L \cap ((T,\infty)\times \mathbb{R}^3)}$ are constant functions if $df = e^t(dz-ydx)|_L$.

Additionally, $L$ is said to be an \textbf{exact Lagrangian filling of $K_+$} if, in particular, $K_- = \emptyset$. Exact Lagrangian fillings of Legendrian links are of interest in a many aspects. They are studied in several papers, for example, see \cite{Chantraine1}, \cite{EHK} and \cite{Hayden}. As mentioned in \cite{EHK}, the existence of an exact Lagrangian filling of a Legendrian link provide an augmentation of the Legendrian Contact Homology DGA of the link itself. So, it is natural to ask whether or not each Legendrian link $K$ is a boundary of an exact Lagrangian filling. The complete answer to this problem is not obvious. However, we do have some partial results. Hayden and Sabloff show in \cite{Hayden} that all positive knots possess Legendrian representatives with exact Lagrangian fillings. Furthermore, they give a conjecture that a smooth knot type has a Legendrian representative with exact Lagrangian fillings if and only if the knot type is quasi-positive and its HOMFLY bound is sharp. An example of a Legendrian knot which does not have exact Lagrangian filling is given in their paper as well.

In many cases, the main tool which is used to construct exact Lagrangian cobordisms, and hence exact Lagrangian fillings, is the following theorem.

\begin{Thm}
\label{movie}
\emph{(See \cite{Bourgeois}, \cite{Chantraine1}, \cite{EHK}, \cite{Rizell}).}
Let $K_+$ and $K_-$ be Legendrian links. Suppose that the front diagram of $K_+$ is obtained from the front diagram of $K_-$ via one of the following three moves:

(1) Legendrian isotopy, including regular isotopy and Legendrian Reidemeister moves;

(2) 0-handle, represented by the first row of Figure \ref{handlep};

(3) 1-handle, represented by the second row of Figure \ref{handlep}.

Then there exists an embedded exact Lagrangian cobordism from $K_-$ to $K_+$.
\end{Thm}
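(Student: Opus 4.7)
The plan is to treat each of the three moves separately, constructing in each case an exact Lagrangian cobordism in the symplectization $(\mathbb{R}_t \times \mathbb{R}^3, d(e^t(dz - y\,dx)))$ whose ends match the required cylinders $(-\infty,-T] \times K_-$ and $[T,\infty) \times K_+$. The general template is: write down a model surface supported in a compact $t$-slab; verify that its restriction is Lagrangian by computing $d(e^t(dz-y\,dx))$ on tangent vectors; verify exactness of $\lambda|_L = e^t(dz-y\,dx)|_L$ by exhibiting a primitive; and finally check that this primitive is locally constant on each cylindrical end. Outside the model's compact support, the cobordism is cylindrical over unchanged portions of the diagram, which are automatically exact Lagrangian with zero primitive, so the local models concatenate to a global cobordism.

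For move (1), given a smooth family $\{K_s\}_{s\in[0,1]}$ of Legendrian links from $K_-$ to $K_+$, I would produce the generating time-dependent contact Hamiltonian $H_s\colon \mathbb{R}^3\to\mathbb{R}$ whose contact vector field integrates to the isotopy, and lift it to a homogeneous Hamiltonian $\widetilde H_s(t,x,y,z) = e^t H_s(x,y,z)$ on the symplectization. Reparametrizing $s$ as a smooth nondecreasing function $\sigma(t)$ equal to $0$ for $t\le -T$ and to $1$ for $t\ge T$, I would define $L$ as the union over $t\in\mathbb{R}$ of $\{t\} \times K_{\sigma(t)}$. For moves (2) and (3), it suffices to exhibit explicit local models: for the 0-handle, a compactly-supported exact Lagrangian disk whose positive end is the cylinder over the small standard Legendrian unknot introduced by the handle; for the 1-handle, an exact Lagrangian saddle in a neighborhood of the pinched crossing, interpolating between two parallel strands and two cusped arcs. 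Both local models can be written down explicitly using the front-projection, following any of the cited references.

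The main obstacle I expect is condition (4), the constancy of the primitive on the cylindrical ends, rather than the Lagrangian or exactness conditions themselves. For the isotopy move, a naive lift of the contact flow yields a primitive whose derivative in the $t$-direction carries a term proportional to $\sigma'(t)H_{\sigma(t)}$; even though $\sigma'$ is supported in $[-T,T]$, one must argue that this term integrates to zero along the ends and does not introduce a jump between the constant values on the two ends. For the local handle models, the analogous issue is ensuring that the two boundary circles of the saddle (respectively, the boundary of the disk) sit at the same $\lambda$-primitive level. Once these constancy checks are arranged, either by explicit choice of cutoffs as in \cite{Chantraine1} and \cite{EHK} or by the Hamiltonian formulation of \cite{Rizell}, the three local constructions concatenate to yield the desired global exact Lagrangian cobordism from $K_-$ to $K_+$.
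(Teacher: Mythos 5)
The paper does not actually prove Theorem \ref{movie}: it is imported wholesale from \cite{Bourgeois}, \cite{Chantraine1}, \cite{EHK} and \cite{Rizell}, so there is no in-paper argument to compare yours against. Measured against those references, your outline has the right architecture --- concatenation of cylindrical pieces with local models for the isotopy, the 0-handle (an exact Lagrangian disk filling the small standard unknot) and the 1-handle (the exact Lagrangian saddle) --- and your treatment of moves (2) and (3) is a fair summary of what the references do.

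There is, however, one step that fails as written, and it is not the step you flag as the main obstacle. For move (1) you define $L=\bigcup_{t}\{t\}\times K_{\sigma(t)}$, the naive trace of the isotopy. This surface is in general not Lagrangian at all, independently of exactness or of condition (4): if $v$ is tangent to $K_{\sigma(t)}$ inside the slice and $X_{H_s}$ is the contact vector field generating the isotopy, the tangent plane to the trace is spanned by $v$ and $\partial_t+\sigma'(t)X_{H_{\sigma(t)}}$, and a direct computation gives
\begin{equation*}
d\bigl(e^t(dz-y\,dx)\bigr)\bigl(\partial_t+\sigma'(t)X_{H_{\sigma(t)}},\,v\bigr)=-e^t\,\sigma'(t)\,dH_{\sigma(t)}(v),
\end{equation*}
which does not vanish unless $H_{\sigma(t)}$ is locally constant along the link. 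The cited constructions repair this by adding an explicit correction term to the trace (a shift of the $y$- and $z$-coordinates built from $H_{\sigma(t)}$ restricted to the link), and they take $\sigma'$ small so that the corrected surface remains embedded --- embeddedness being part of the assertion of the theorem and not addressed in your sketch. So the primary gap is the Lagrangian condition itself for the isotopy piece; the constancy of the primitive on the ends, which you identify as the main difficulty, is a genuine but secondary bookkeeping point once the corrected model is in place.
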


\begin{figure}
\begin{center}
\includegraphics[height=2in]{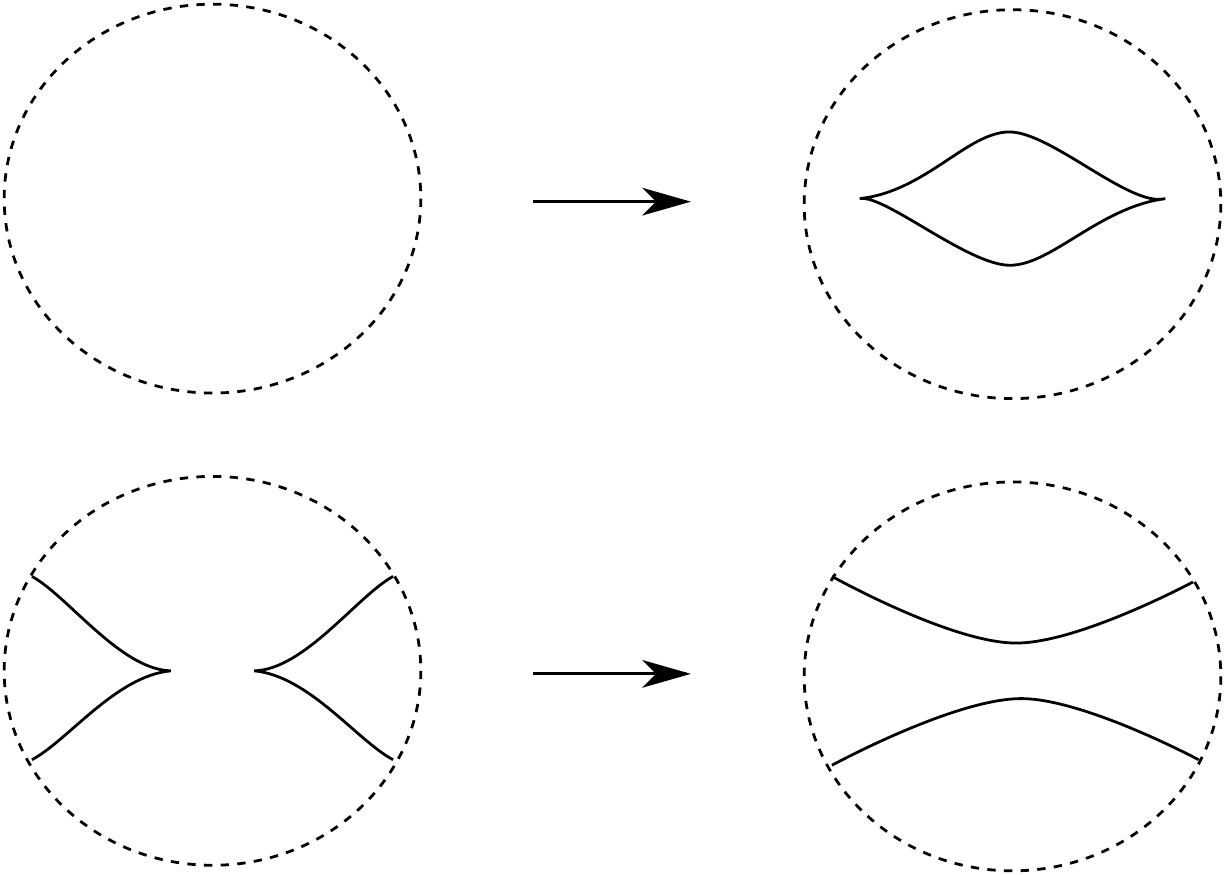}
\end{center}
\caption{Local modifications of front diagrams corresponding to 0-handle attachment and 1-handle attachment.}
\label{handlep}
\end{figure}

In the language of Theorem \ref{movie}, if there is a finite sequence\footnote{We will only consider sequences with length $\geq 1$.} of moves taking the front diagram of $K_-$ to the front diagram of $K_+$, then there exists an embedded exact Lagrangian cobordism from $K_-$ to $K_+$ arising from the composition of cobordisms associated with the moves. We say that this exact Lagrangian cobordism is \textbf{decomposable}. Throughout this paper, we will mainly study decomposable exact Lagrangian fillings since everything can be visualized as a finite sequence of moves in Theorem \ref{movie}.

Next, normal rulings are combinatorial objects which provide an invariant of Legendrian links, see $\cite{Chekanov Pushkar'}$. They can be easily obtained by considering front diagrams of Legendrian links. It will be very useful if we can establish the existence of decomposable exact Lagrangian fillings by just looking at normal rulings of Legendrian links because they are a lot easier to deal with.

The very first glimpse of the connection between exact Lagrangian fillings and normal rulings of Legendrian links has been revealed by \cite{EHK} and \cite{Sabloff}. As stated above, if a Legendrian link has an exact Lagrangian filling, then its contact homology DGA has an augmentation. Sabloff shows that if the contact homology DGA of a Legendrian link has an augmentation, then the link has a normal ruling (see \cite{Sabloff}). However, the converse is not guaranteed. That is, it is an open question whether a normal ruling implies the existence of a possibly non-orientable exact Lagrangian filling. In this work, we would like to partially answer this question by restricting ourselves to the case of decomposable exact Lagrangian fillings. In particular, we give a proof of the following lemma at the end of Section 2.

\begin{Lem}
\label{l5}
Given a decomposable exact Lagrangian filling of a Legendrian link $K$, there is a canonical normal ruling of $K$ associated with it.
\end{Lem}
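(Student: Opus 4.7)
\medskip
\noindent\textbf{Proof plan.} The natural approach is induction on the length $N$ of the decomposition sequence defining the filling $L$. Let $K_0 = \emptyset, K_1, \ldots, K_N = K$ be the Legendrian front diagrams produced along the way, so that each $K_j$ arises from $K_{j-1}$ by one of the three moves of Theorem~\ref{movie}. By induction on $j$, I will construct a normal ruling $\rho_j$ of $K_j$, and take $\rho_L := \rho_N$ to be the canonical ruling associated with $L$. The base case $j=0$ is trivial: $K_0 = \emptyset$ carries the empty ruling.

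For the inductive step, assume $\rho_{j-1}$ has been defined. If $m_j$ is a Legendrian isotopy, then the strand-pairing comprising $\rho_{j-1}$ transforms via the well-known local behavior of normal rulings under regular isotopy and the three Legendrian Reidemeister moves, yielding $\rho_j$ uniquely. If $m_j$ is a 0-handle, then $K_j = K_{j-1} \sqcup U$ where $U$ is a newly-created standard Legendrian unknot joining a single left cusp to a single right cusp; I set $\rho_j$ to be $\rho_{j-1}$ together with the unique switch-free ruling of $U$.

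The main obstacle is the 1-handle case. Locally the move replaces a right cusp and a nearby left cusp of $K_{j-1}$ with two parallel horizontal strands. In $\rho_{j-1}$ each of these cusps pairs its upper strand with its lower strand; after the move, the former upper strands become a single strand of $K_j$ and the former lower strands become another. I will define $\rho_j$ to coincide with $\rho_{j-1}$ away from the handle site, and at the handle site to pair the two upper strands with each other and the two lower strands with each other. What must be verified is that this recipe yields a bona fide normal ruling of $K_j$. Normality at each crossing is inherited from $\rho_{j-1}$, since no crossing of $K_j$ lies in the region affected by the 1-handle and the pairing is unchanged elsewhere. The subtler point is that the global strand-pairing must still partition the strands of $K_j$ into consistent ruling pairs meeting at left and right cusps. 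This reduces to analyzing how the ruling pairs at the handle site reorganize: typically, if the two affected cusps belonged to distinct ruling pairs of $\rho_{j-1}$, those pairs merge into a single pair in $\rho_j$; if they belonged to a common small eye, that eye splits, and one has to check that the new pieces close up consistently with the rest of the link. This combinatorial bookkeeping at the 1-handle is where the argument needs the most care, but it is ultimately a matter of chasing strand endpoints through the local modification.
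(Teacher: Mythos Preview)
Your approach is essentially the paper's: induct on the length of the decomposition, invoke the Chekanov--Pushkar' bijection (Theorem~\ref{Chekanov}) for the isotopy step, and keep the same set of switches for the handle steps. The paper handles the 1-handle in a single phrase (``preserving the set of switches''), and in fact the case you flag as delicate---both affected cusps lying on a common eye, forcing a split---cannot occur: in the 1-handle configuration the left cusp sits strictly to the right of the right cusp, whereas any single eye has its left cusp strictly to the left of its right cusp, so the two cusps necessarily belong to distinct eyes with disjoint $x$-supports, which then merge cleanly into one eye with no self-intersections.
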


By Lemma \ref{l5}, we can say that there is a specific normal ruling associated with every Legendrian link with a decomposable exact Lagrangian filling. Now, we can investigate some important characteristics of these associated normal rulings.

In order to do this, we introduce the notion of clasps of normal rulings in Section 3. This object allows us to prove the following theorem in Section 3.5.

\begin{Thm}
\label{key}
Given a decomposable exact Lagrangian filling $L$ of a Legendrian link $K$, the normal ruling associated with $L$ must have even number of clasps.
\end{Thm}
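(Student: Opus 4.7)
The natural plan is to argue by induction on the length $n$ of the sequence of moves realizing the decomposable filling $L$, tracking the parity of clasps in the canonically associated ruling at each intermediate stage furnished by Lemma \ref{l5}. For the base case $n=1$, the only move that can begin with the empty link is a $0$-handle attachment, which produces a standard Legendrian unknot; its canonical ruling has a single ruling disk, no crossings, and therefore zero clasps, which is even.

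For the inductive step, let $K'$ denote the Legendrian link after the first $n-1$ moves, equipped with its associated ruling $\rho'$, and let $K$ be obtained from $K'$ by one further move. I would treat the three move types separately. Legendrian isotopy reduces to checking each Legendrian Reidemeister move; the associated ruling is either untouched or altered in a controlled way, and in each case I expect the number of clasps to change by an even integer, with the triple-point move deserving the most attention since it can locally permute crossing types. A $0$-handle appends an unknot supported in a disjoint new ruling disk that contains no crossings, so no clasps are created or destroyed. The $1$-handle case is the substantive one: the pinch merges two strands of $K'$, can join two ruling disks into one or split one into two, and may swap a switch with a return in the neighborhood of the pinch. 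Here I would carry out a local case analysis organized by whether the two pinched strands lie in the same ruling disk or in different ones, and by the local configuration of companion strands on either side, and verify in each case that the net change in clasp count is even.

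The main obstacle will be this $1$-handle step. The heuristic reason I expect parity to be preserved is that a clasp records a mutual interaction between two distinct ruling disks, while the pinch operates symmetrically on the pair of strands it joins; this symmetry ought to force any created or destroyed clasps to appear in pairs, so that the local change is $0$ or $\pm 2$. Turning this heuristic into a proof requires identifying the correct local model for how the pinch transforms $\rho'$ into the ruling associated with $L$, enumerating the finitely many admissible configurations, and computing the change in clasps modulo $2$ in each. Once this local parity lemma is established, the induction closes and Theorem \ref{key} follows.
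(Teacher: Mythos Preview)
Your inductive skeleton matches the paper's proof, but you have inverted where the work lies. The $1$-handle is in fact the trivial case: by construction (see the proof of Lemma~\ref{l5}) the associated ruling after a $1$-handle has \emph{the same} set of switches as before, so there is no ``swap of a switch with a return.'' Moreover, the pinch acts on two strands that are adjacent in the front with nothing between them, so in the resolution it never creates or destroys a block whose two eyes sit in a non-nested configuration; since clasps live only in such blocks, the clasp count is preserved exactly, not merely in parity. The same holds for the $0$-handle. Your proposed local case analysis for the $1$-handle is therefore unnecessary.

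All of the real content is hidden in your sentence ``Legendrian isotopy reduces to checking each Legendrian Reidemeister move.'' First, this is not quite true: a regular isotopy that interchanges the $x$-coordinates of two crossings, one of which is a switch, can genuinely alter the ruling (cf.\ the proof of Theorem~\ref{Chekanov}), and verifying that parity survives this move is Proposition~\ref{p1}, one of the two substantial arguments in the paper, requiring the enhanced-cut machinery of Lemma~\ref{lc}. Second, among the Reidemeister moves only R3 with exactly one switch is delicate, but that case demands a lengthy enumeration (Lemma~\ref{l6}) organized by the normality configurations of the three eyes involved. Your outline is correct; redirect the case analysis from the $1$-handle to these two isotopy lemmas.
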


One of important consequences of Theorem \ref{key} is that every Legendrian link with a decomposable exact Lagrangian filling must have at least one normal ruling with an even number of clasps. Thus, if we want to show that there is a Legendrian link with no decomposable exact Lagrangian filling, if there is any, we just need to show that the link does not have a normal rulings with an even number of clasps. We employ this idea to answer the open problem in the following theorem.

\begin{Thm}
	\label{counter}
	For each $n \geq 0$, there is a particular Legendrian $(4,-(2n+5))$-torus knot which has a normal ruling but none of decomposable exact Lagrangian filling.
\end{Thm}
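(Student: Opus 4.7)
The plan is to invoke the contrapositive of Theorem~\ref{key}, exactly as indicated in the discussion following that theorem: it suffices to exhibit, for each $n\geq 0$, a specific Legendrian representative of the $(4,-(2n+5))$-torus knot whose set of normal rulings is nonempty but consists only of rulings with an odd number of clasps. Nonemptiness of this set gives the ``has a normal ruling'' part of the statement, while Theorem~\ref{key} rules out any decomposable exact Lagrangian filling.

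First, I would write down an explicit front diagram for the chosen Legendrian $(4,-(2n+5))$-torus knot. The natural candidate is a maximal Thurston--Bennequin representative realized as the closure of a negative $4$-braid built out of $2n+5$ identical twist blocks, drawn with the minimum possible number of cusps on each of the four strands. This presentation makes every crossing visible and allows the normal rulings and their clasps to be analyzed combinatorially.

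Second, I would show that this diagram admits exactly one normal ruling. One approach is to compute (or quote) the Chekanov--Pushkar' ruling polynomial of the $(4,-(2n+5))$-torus knot and check that the relevant specialization equals $1$; for torus knots of braid index $4$ this reduces to a finite bookkeeping of allowed switching patterns. A more hands-on alternative is to argue directly from the front diagram that the normal ruling axioms force a unique choice of switch/non-switch at each crossing, leaving no freedom in the construction.

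Third, I would count the clasps of this unique ruling using the notions developed in Section~3 and check that the count is odd for every $n\geq 0$. Because the diagram is assembled from $2n+5$ repeated twist blocks glued to a fixed cusp region, I expect the number of clasps to depend affinely on $n$, of the form $\alpha(2n+5)+\beta$, and I would select the Legendrian representative so that $\alpha(2n+5)+\beta$ is odd for all $n\geq 0$. This parity computation is the step I expect to be the main obstacle: one must unambiguously locate every clasp contributed by the twist blocks and by the cusp caps, track which pairs of ruling disks are involved, and confirm that no cancellation in the count disturbs the parity as $n$ varies. Once this is established, Theorem~\ref{key} forbids any decomposable exact Lagrangian filling, while the normal ruling we exhibited proves the ``has a normal ruling'' clause.
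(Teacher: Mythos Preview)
Your proposal is correct and follows essentially the same route as the paper: exhibit an explicit front for the Legendrian $(4,-(2n+5))$-torus knot, establish that it has exactly one normal ruling, count the clasps of that ruling, and then invoke Theorem~\ref{key} (via Corollary~\ref{ckey}). The paper defers the uniqueness of the normal ruling to a companion paper and reads off directly from the resolution that the clasp count is exactly $2n+5$, so your expected affine form $\alpha(2n+5)+\beta$ is realized with $\alpha=1$, $\beta=0$.
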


Finally, we note here that all results in this paper are coming from \cite{Met}.

\vspace{0.3cm}
\noindent
\textbf{Acknowledgments.} The author would like to acknowledge William Menasco for his support, which makes this work possible. In addition, the author would like to thank Lenhard Ng for an introduction to this topic. Finally, Chiang Mai University research funding provides financial support for this paper.

\vspace{0.5cm}
\begin{center}
\normalsize{2. N}\footnotesize{ORMAL RULINGS}
\end{center}

Suppose we have a front diagram $K$. By regular isotopy, we may assume from now on that its cusps and crossings have distinct $x$-coordinates. We consider a subset $\rho$ of the set of all crossings of $K$. Then we perform resolution, see Figure \ref{resolution}, at each crossing in $\rho$ so that we obtain a resulting front diagram $K'$. We call $\rho$ a \textbf{normal ruling} if the followings hold:

(1) each component of $K'$ has one left cusp, one right cusp and no self-intersections;

(2) horizontal strands at each resolution belong to different components in $K'$; and

(3) vertical slice at each resolution must appear in $K'$ as one of Figure \ref{normality}.

\begin{figure}
\begin{center}
\includegraphics[width=2.5in]{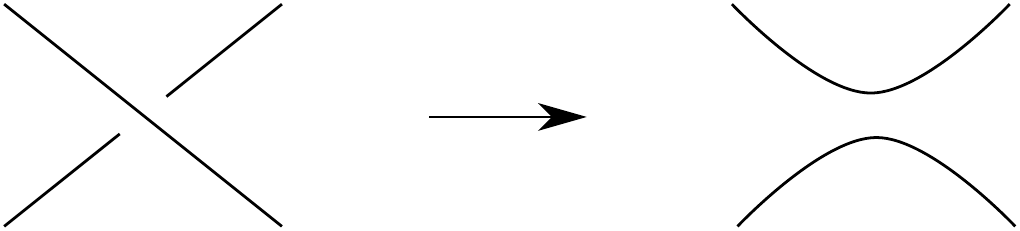}
\end{center}
\caption{Resolution at a crossing.}
\label{resolution}
\end{figure}

\begin{figure}
\begin{center}
\vspace{1cm}
\includegraphics[width=3.2in]{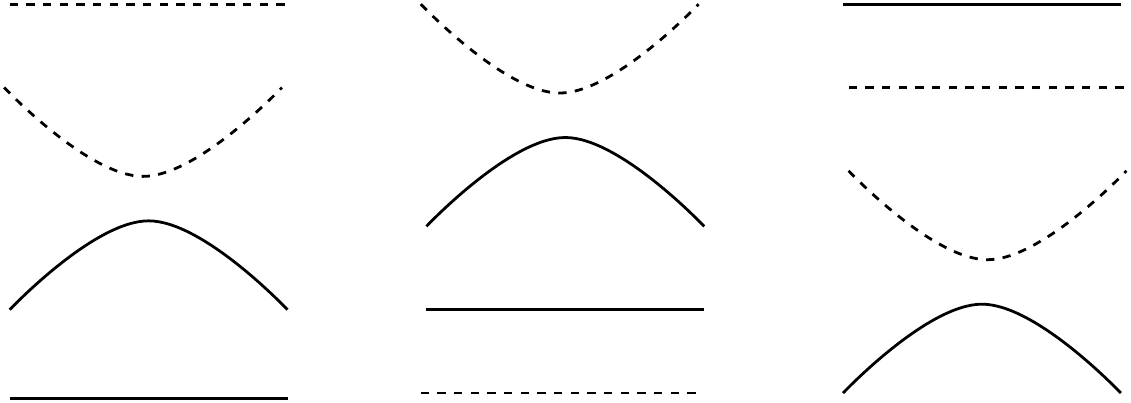}
\end{center}
\caption{Possible vertical slices at resolution when considering only two resulting components involved.}
\label{normality}
\end{figure}

If $\rho$ is a normal ruling, then all crossings in $\rho$ are called \textbf{switches} and $K'$ is the \textbf{resolution of $\rho$} while each component of $K'$ is named an \textbf{eye}. Moreover, (3) is the \textbf{normality condition}, and we say a Legendrian link has a normal ruling if its front diagram admitting a normal ruling.

For example, the knot $3_1$, as in Figure \ref{31}, has a normal ruling with all 3 crossings are switches. This is easy to verify by looking at its resolution in the right side of Figure \ref{31}. 

\begin{figure}
\begin{center}
\includegraphics[height=0.8in]{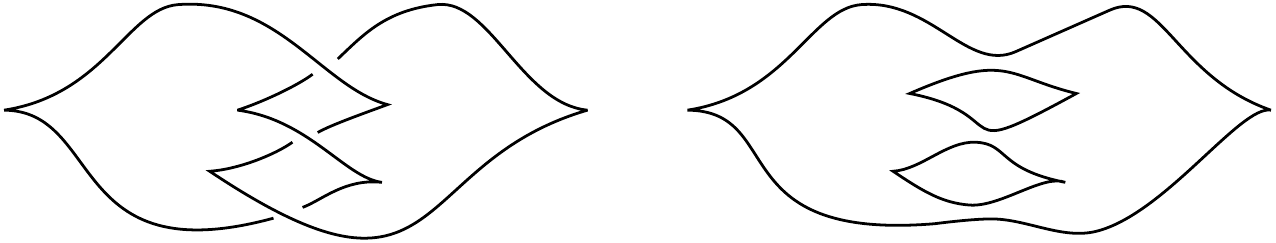}
\end{center}
\caption{The knot $3_1$ (left) and the resolution of its normal ruling (right).}
\label{31}
\end{figure}

Next, we state the fact that normal rulings offer us an invariant for Legendrian links.

\begin{Thm}
\label{Chekanov}
\emph{(See \cite{Chekanov Pushkar'}).}
Let $K$, $\widetilde{K}$ be Legendrian links. If they are Legendrian isotopic, then there is a one-to-one correspondence between their normal rulings. In particular, the number of normal rulings is invariant under Legendrian isotopy.
\end{Thm}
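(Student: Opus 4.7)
The plan is to reduce the claim to checking invariance under each elementary move generating Legendrian isotopy in the front projection, and then to construct an explicit bijection move by move. By the Legendrian analogue of Reidemeister's theorem for fronts, every Legendrian isotopy between front diagrams can be written as a finite composition of ambient regular isotopies that preserve the combinatorial data, ``exchange'' moves that reorder the $x$-coordinates of two features (cusps or crossings) whose $y$-coordinates are far apart, and the three Legendrian Reidemeister moves on fronts: cusp birth/death (RI), strand-through-cusp (RII), and the triangle move (RIII). With this reduction in hand, the proof becomes a case-by-case verification.

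For the ambient isotopies and exchange moves I would verify that the combinatorial data on which a normal ruling depends --- the set of crossings together with, for each choice of $\rho$, the cyclic ordering of strands in every vertical slice of the resolution --- is preserved; hence the identity map on crossings is tautologically a bijection between the sets of normal rulings. For RI the local picture contains no crossings, and the two new cusps bound a single new eye; I would extend the ruling by adjoining (or deleting) this trivial eye. For RII two new crossings are created simultaneously, and the normality condition together with the one-pair-of-cusps-per-component requirement forces them to be either both switches or both non-switches, giving a canonical pairing with the unique corresponding ruling on the opposite side of the move.

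The bulk of the work, and the main obstacle, will be the RIII case. Each of the three crossings in the triangle can independently be a switch or not, so a priori there are eight local switch-patterns on each side, but the normality condition couples the relative $y$-heights of the three strands to the permitted patterns. I would organize the case check by how many of the three crossings are switched ($0,1,2,3$) and by the relative heights of the strands entering and leaving the triangle; in every case one verifies that exactly one switch-pattern on the ``after'' side is compatible with the given ``before'' pattern, thereby yielding the required local bijection. The argument is finite but delicate, since altering one crossing's status in the ``after'' configuration typically forces a compensating change elsewhere to maintain a valid eye decomposition.

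Finally, composing the local bijections along any decomposition of the given Legendrian isotopy produces a bijection between the normal rulings of $K$ and those of $\widetilde K$; in particular the cardinality is an isotopy invariant. To promote this to a well-defined bijection (independent of the chosen decomposition) I would check that the local bijections respect the natural commutation relations between elementary moves that act on disjoint parts of the diagram --- a routine locality verification once the RIII case has been handled.
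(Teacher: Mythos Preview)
Your overall strategy---reduce to elementary front moves and build the bijection move by move---is exactly what the paper does. However, your analysis of several individual moves is incorrect, and these are not minor slips: they would make the bijection fail.

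First, the exchange of two crossings that share a strand is \emph{not} handled by the identity map on switch sets. Your list of generators only allows exchanges of features ``whose $y$-coordinates are far apart,'' but a regular isotopy can slide two adjacent crossings past one another even when they share strands. In that situation the normality condition can flip: if exactly two eyes are involved, the correct correspondence sends $\{a\}\leftrightarrow\{b'\}$ and $\{b\}\leftrightarrow\{a'\}$, not $\{a\}\leftrightarrow\{a'\}$. So ``the identity map on crossings is tautologically a bijection'' is false here, and you need a separate case analysis.

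Second, your description of RI is wrong for Legendrian fronts. The front RI move (the zigzag) creates two cusps \emph{and one crossing}; it does not produce a new standalone eye. The correct local correspondence is $\emptyset\leftrightarrow\{a\}$: the new crossing is forced to be a switch, and resolving it simply absorbs the zigzag back into the existing eye.

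Third, for RII (strand through cusp) your claim that the two new crossings are ``either both switches or both non-switches'' is too generous. In fact the only valid option is that neither is a switch: the correspondence is $\emptyset\leftrightarrow\emptyset$. If both were switches, the resolved picture near the cusp would violate either the one-left-cusp/one-right-cusp condition or the normality condition.

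Finally, for RIII your instinct that the two-switch cases are the delicate ones is right, but ``exactly one pattern on the after side is compatible'' is only true once you fix the global eye structure outside the triangle: the same local two-switch set on the before side (say $\{b',c'\}$ on the right) can correspond to different two-switch sets on the left ($\{a,b\}$ or $\{a,c\}$) depending on how the companion strands sit. You need to verify that these ambient configurations are mutually exclusive, which is the actual content of the RIII check.
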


\begin{proof}
By induction, we only need to consider the case that $\widetilde{K}$ is obtained from $K$ by a single move of regular isotopy, R1, R2, or R3. 

First, any regular isotopy that could change a ruling must interchange $x$-coordinates of two crossings (see Figure \ref{reg}) because this might violate the normality condition. We may assume further that there is no other crossings/cusps in the vertical strips of Figure \ref{reg}. Then the correspondence would depend on the following two cases (we consider only two crossings appeared in Figure \ref{reg}. The rest of crossings are the same under the correspondence).
\vspace{0.2cm}

(1) Exactly two eyes are involved: $\emptyset \leftrightarrow \emptyset$, $\{a, b\} \leftrightarrow \{a', b'\}$, $\{a\} \leftrightarrow \{b'\}$, $\{b\} \leftrightarrow \{a'\}$.

(2) More than two eyes are involved: $\emptyset \leftrightarrow \emptyset$, $\{a, b\} \leftrightarrow \{a', b'\}$, $\{a\} \leftrightarrow \{a'\}$,  $\{b\} \leftrightarrow \{b'\}$. 
\vspace{-0.2cm}

Next, for Reidemeister moves, see Figure \ref{Rmoveid}, we have the following identifications under an assumption that there is no other crossings in every vertical strip, i.e. all the moves occur in very thin vertical strip (again, we consider only crossings appeared in Figure \ref{reg}. The rest of crossings are the same under the correspondence).

R1: $\emptyset \leftrightarrow \{a\}$;

R2: $\emptyset \leftrightarrow \emptyset$;

R3: $\emptyset \leftrightarrow \emptyset$, $\{a\} \leftrightarrow \{a'\}$, $\{b\} \leftrightarrow \{b'\}$, $\{c\} \leftrightarrow \{c'\}$, $\{a, b, c\} \leftrightarrow \{a', b', c'\}$, and

2-switches case A: $\{a, b\} \leftrightarrow \{b', c'\}$ as in Figure \ref{R3a},
 
2-switches case B: $\{a, c\} \leftrightarrow \{b', c'\}$ as in Figure \ref{R3b}, 

2-switches case C: $\{b, c\} \leftrightarrow \{a', b'\}$ as in Figure \ref{R3c}, 

2-switches case D: $\{b, c\} \leftrightarrow \{a', c'\}$ as in Figure \ref{R3d}.

Notice that case A and B cannot happen for the same Legendrian link with the rest switches identical. Similarly, case C and D cannot happen for the same Legendrian link with the rest switches identical. Hence the correspondence is bijective.
\end{proof}

\begin{figure}
\begin{center}
\vspace{0.7cm}
\includegraphics[width=4.8in]{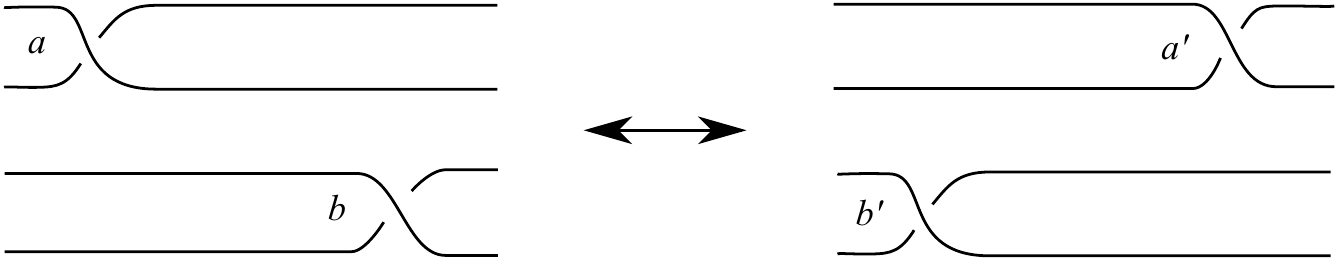}
\end{center}
\caption{Regular isotopy interchanging crossings.}
\label{reg}
\end{figure}

\begin{figure}[h]
\begin{center}
\includegraphics[width=4.9in]{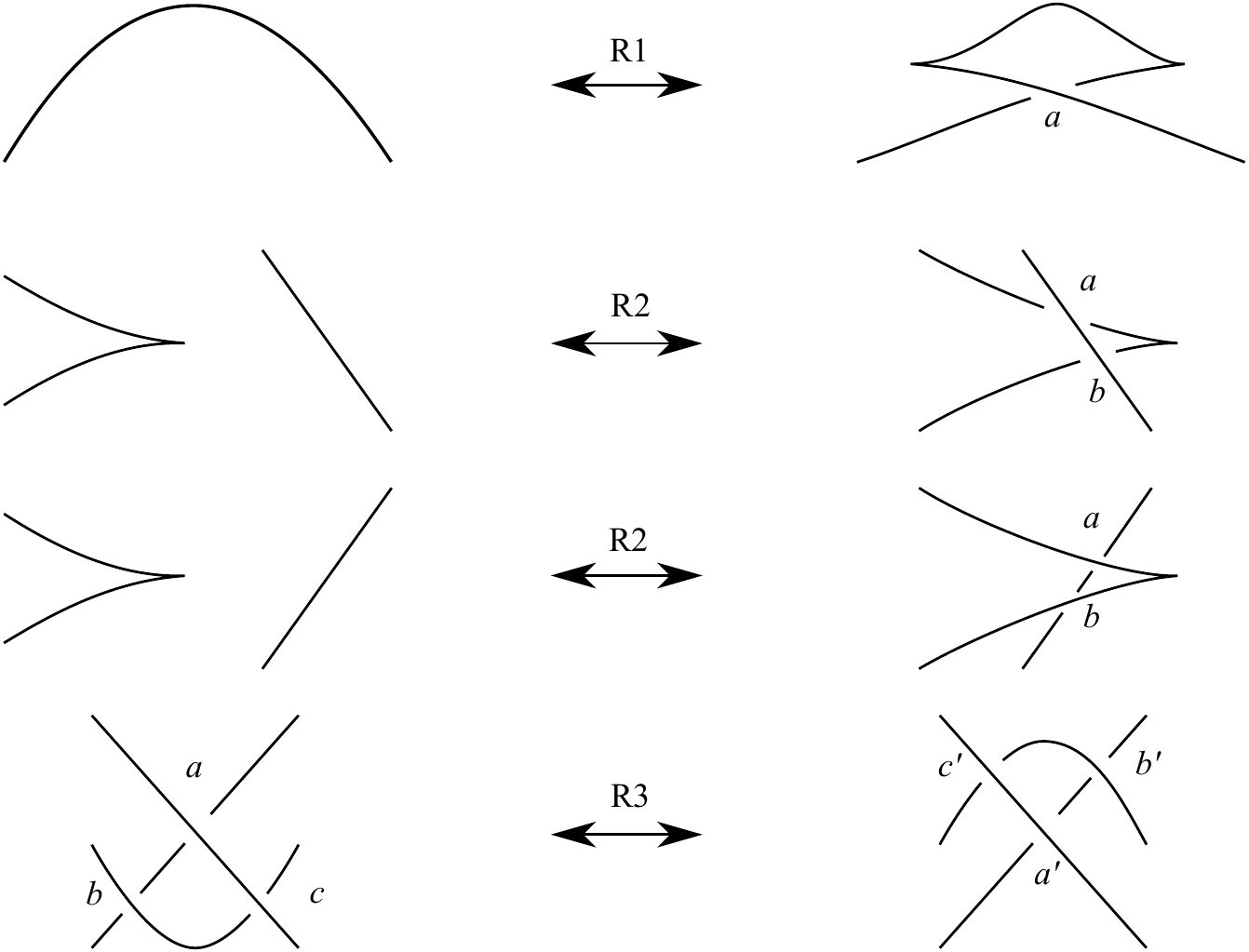}
\end{center}
\caption{Legendrian Reidemeister moves including the rotation of R1 and R2 by 180$^\circ$.}
\label{Rmoveid}
\end{figure}

\begin{figure}
\begin{center}
\includegraphics{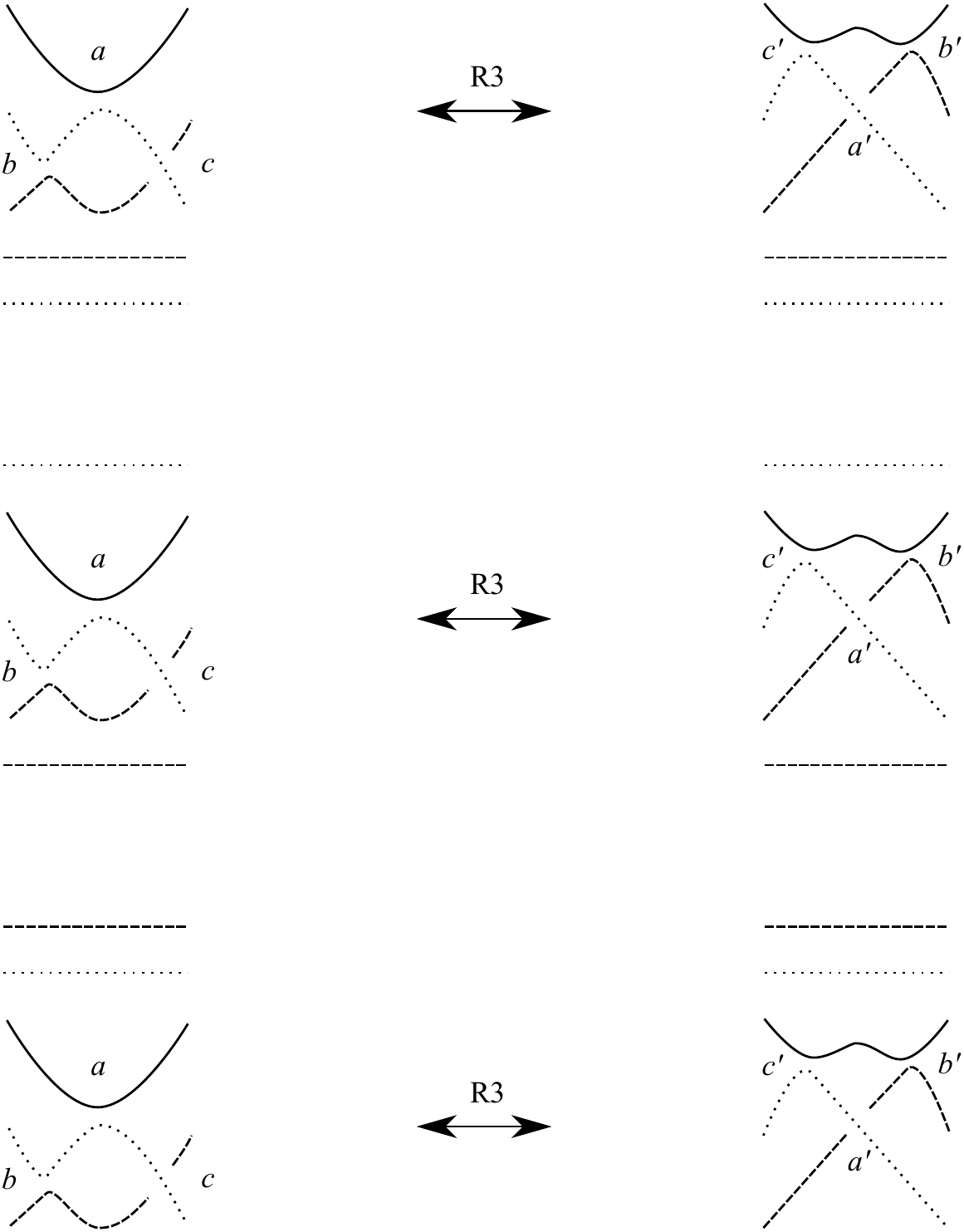}
\end{center}
\caption{Case A.}
\label{R3a}
\end{figure}

\begin{figure}
\begin{center}
\includegraphics{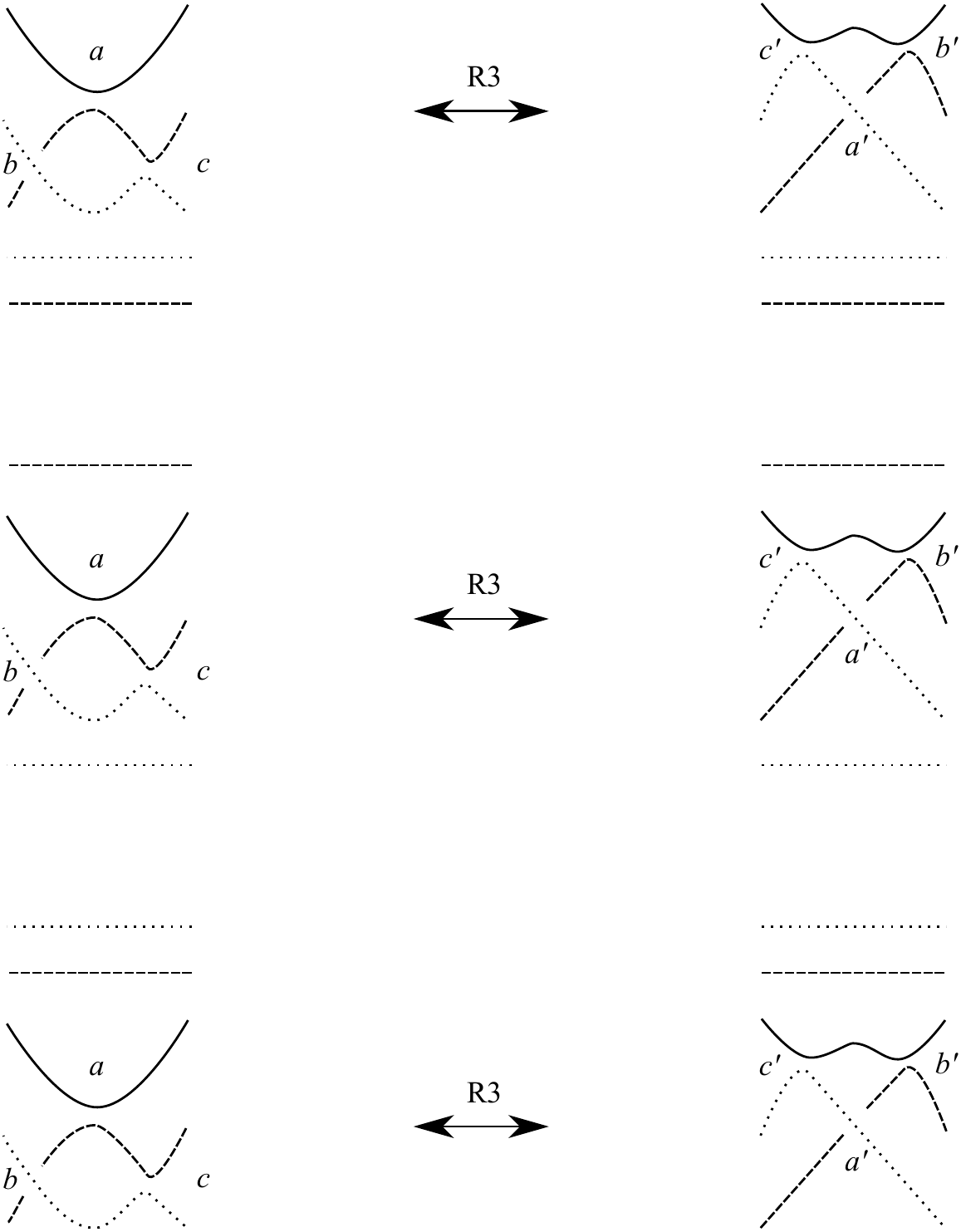}
\end{center}
\caption{Case B.}
\label{R3b}
\end{figure}

\begin{figure}
\begin{center}
\includegraphics{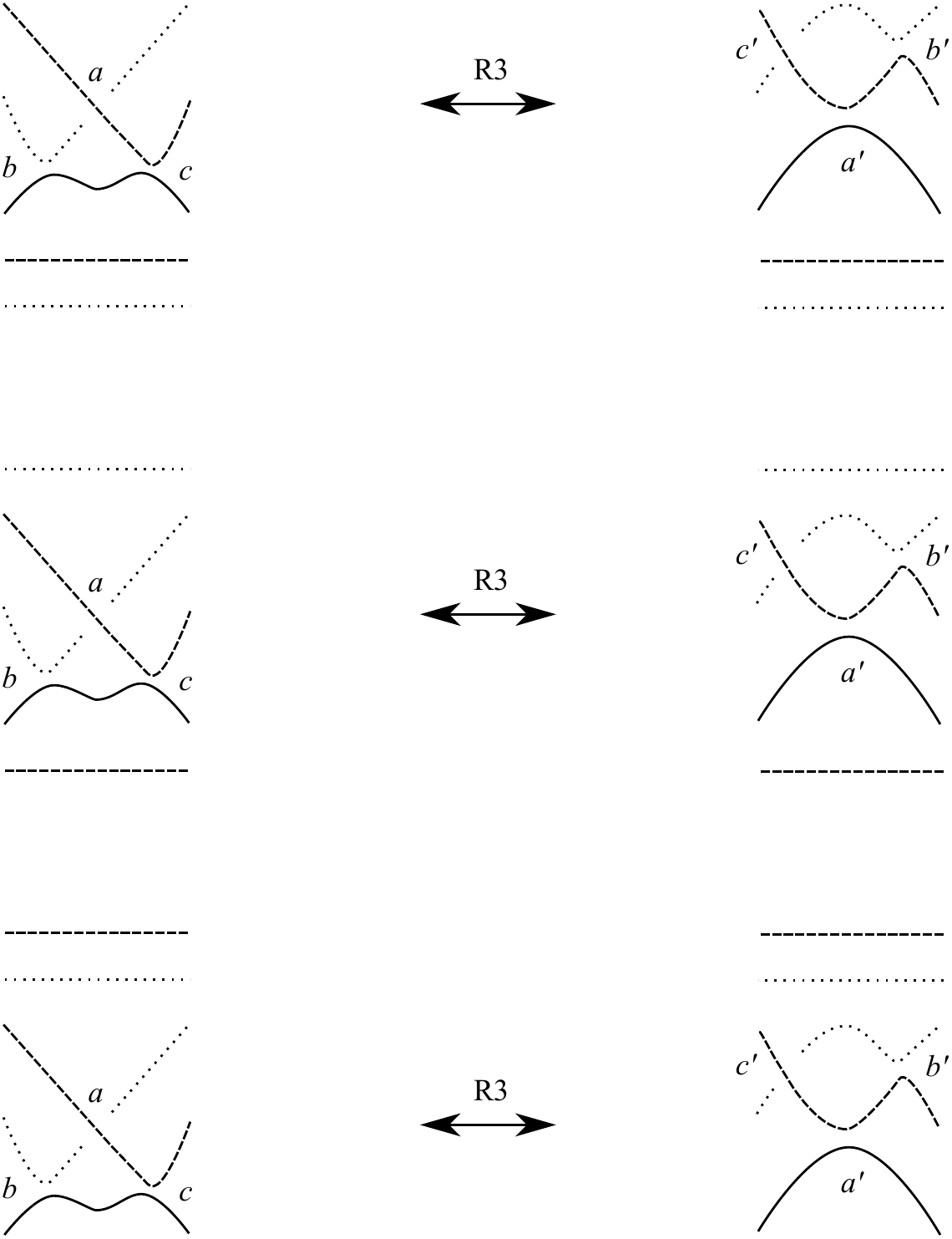}
\end{center}
\caption{Case C.}
\label{R3c}
\end{figure}

\begin{figure}
\begin{center}
\includegraphics{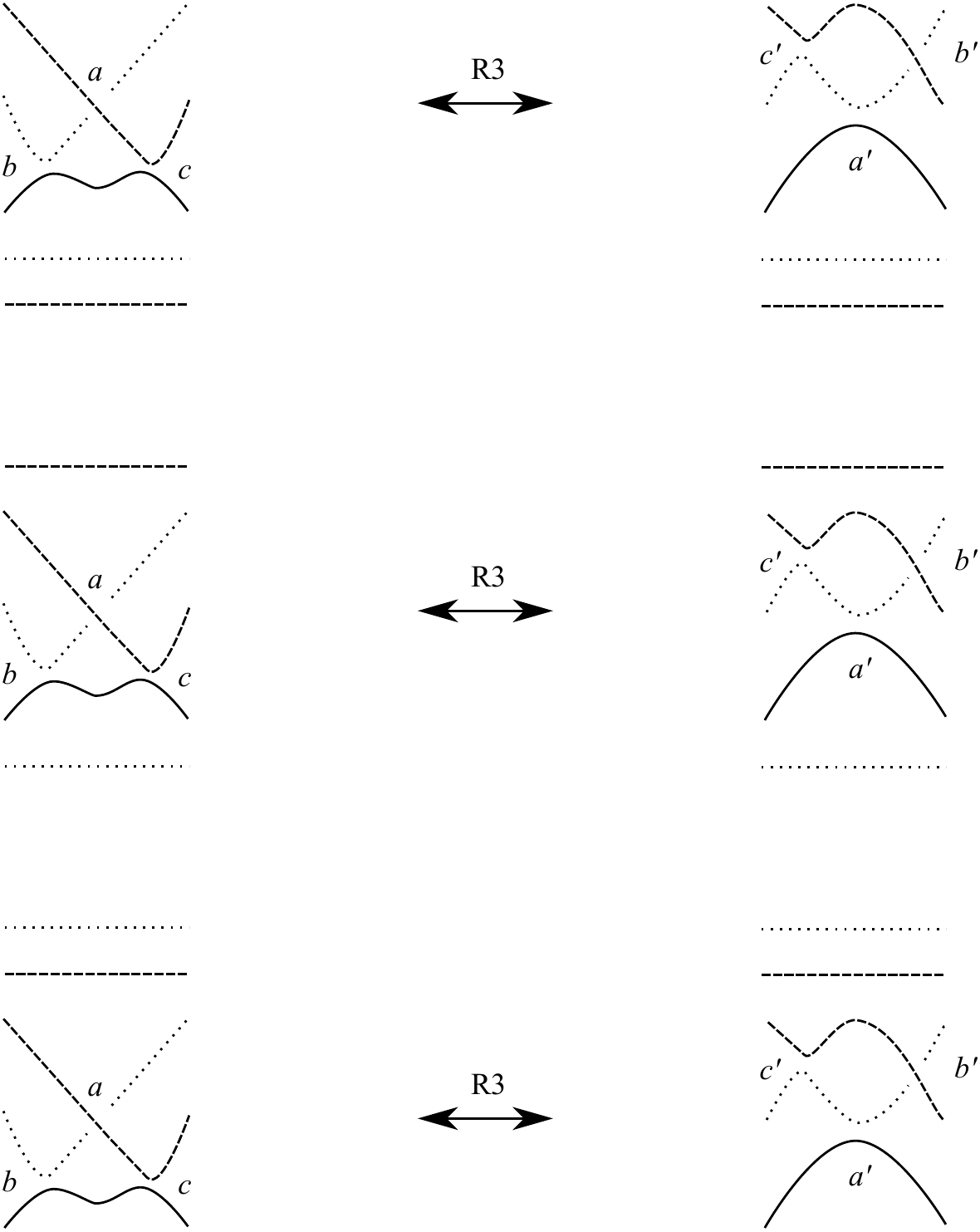}
\end{center}
\caption{Case D.}
\label{R3d}
\end{figure}

Next, we use Theorem \ref{Chekanov} to prove Lemma \ref{l5}.

\begin{proof}[Proof of Lemma \ref{l5}]
We prove by induction on the number of moves applied to $\emptyset$ to obtain $K$. For base case, the first move that could be applied to $\emptyset$ is 0-handle, which is obviously giving us a normal ruling (the standard Legendrian unknot has 1 normal ruling). Next, suppose that the statement is true for any sequence of moves with length $N$ or less. If the $(N+1)^{th}$ move is Legendrian isotopy, then by the proof of Theorem \ref{Chekanov} we obtain an associated normal ruling via the one-to-one correspondence. If the $(N+1)^{th}$ move is 0-handle or 1-handle, then the move can give a normal ruling by preserving the set of switches.
\end{proof}
\newpage
\begin{center}
\normalsize{3. C}\footnotesize{LASPS AND THEIR APPLICATIONS}
\end{center}
\noindent
\textbf{3.1 Blocks and Clasps.}
Notice that if the resolution of a normal ruling has 2 components, then it could be built (may need regular isotopy) from blocks by patching the ends of blocks at overlapping areas. For example, a front diagram in Figure \ref{be1} is built from 3 blocks illustrated at the bottom.

\begin{figure}[h]
	\begin{center}
		\vspace{0.8cm}
		\includegraphics[width=4.5in]{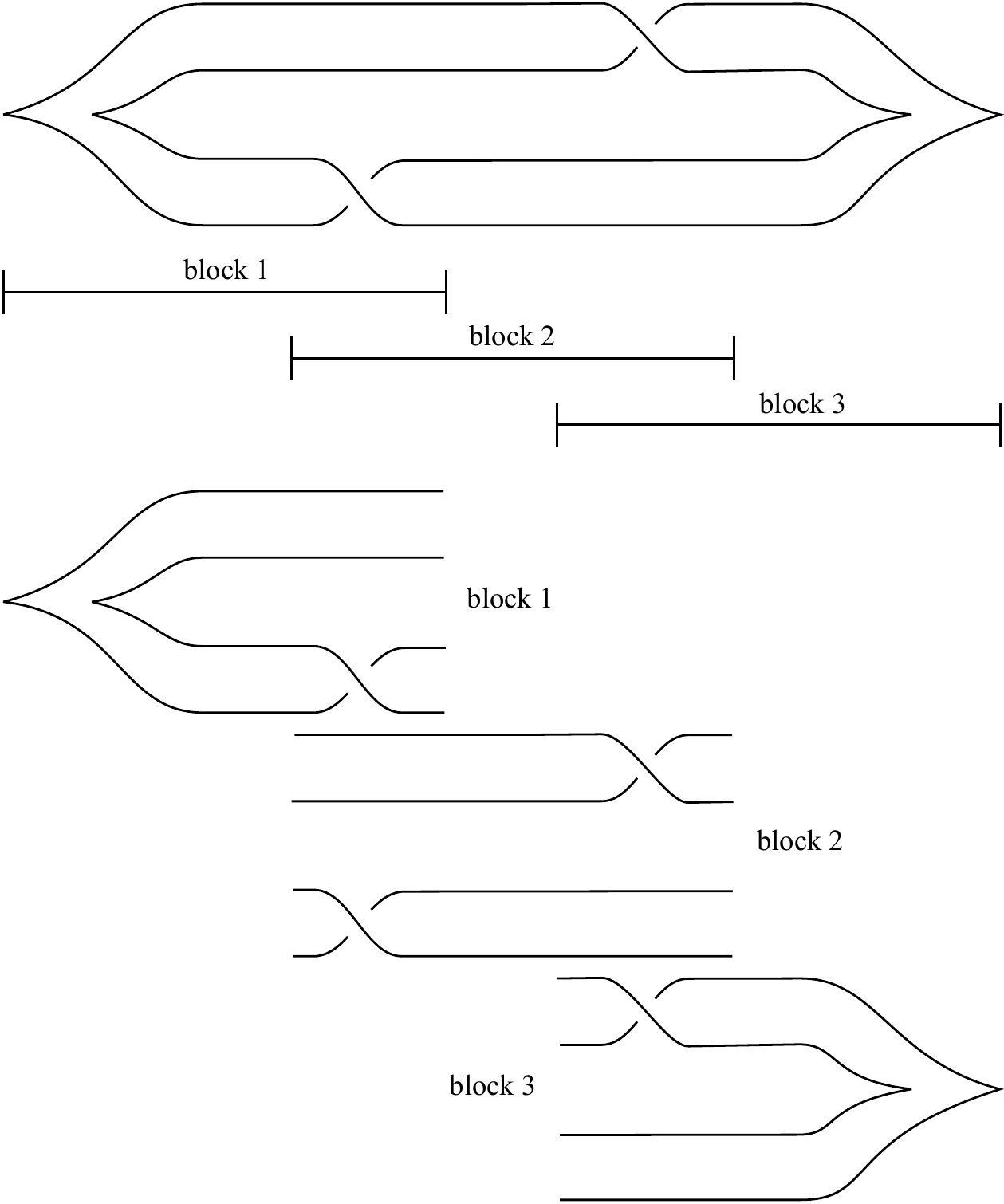}
	\end{center}
	\caption{A front diagram built from 3 blocks.}
	\label{be1}
\end{figure}

\begin{figure}
	\begin{center}
		\includegraphics[width=6in]{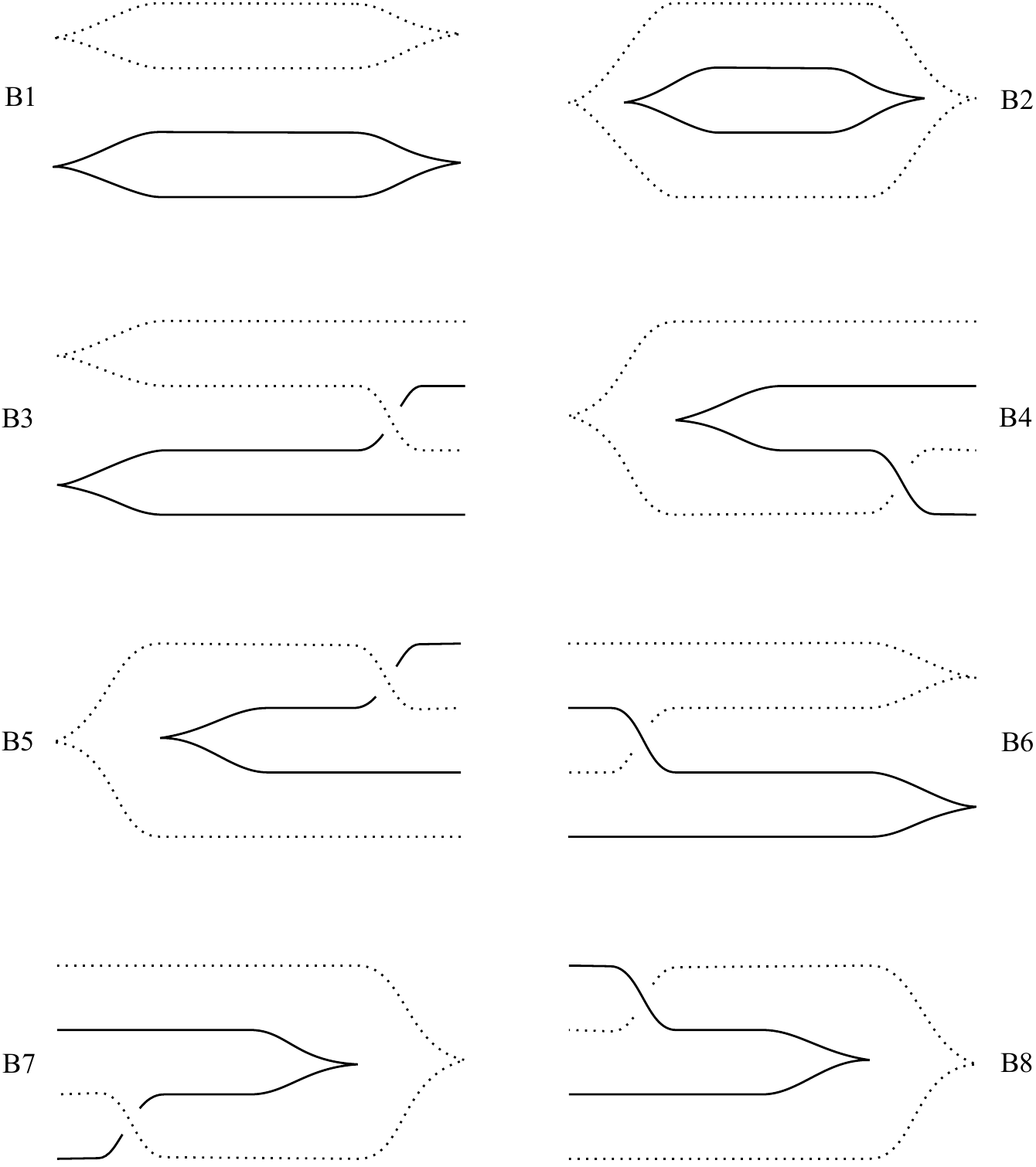}
	\end{center}
	\caption{Blocks of 2-component resolution.}
	\label{b1}
\end{figure}

\begin{figure}
	\begin{center}
		\includegraphics[width=6in]{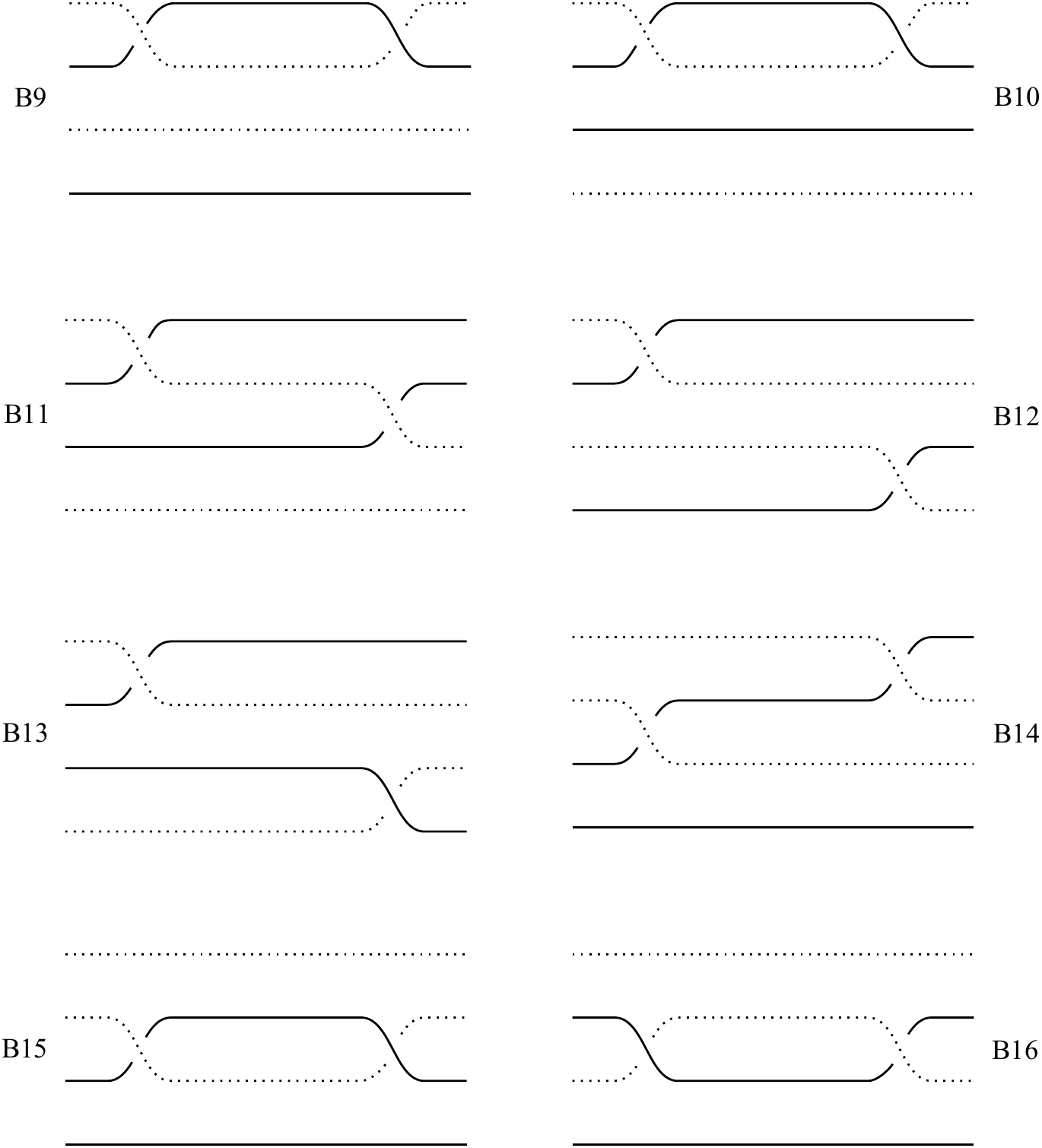}
	\end{center}
	\caption{Blocks of 2-component resolution (continue).}
	\label{b2}
\end{figure}

\begin{figure}
	\begin{center}
		\includegraphics[width=6in]{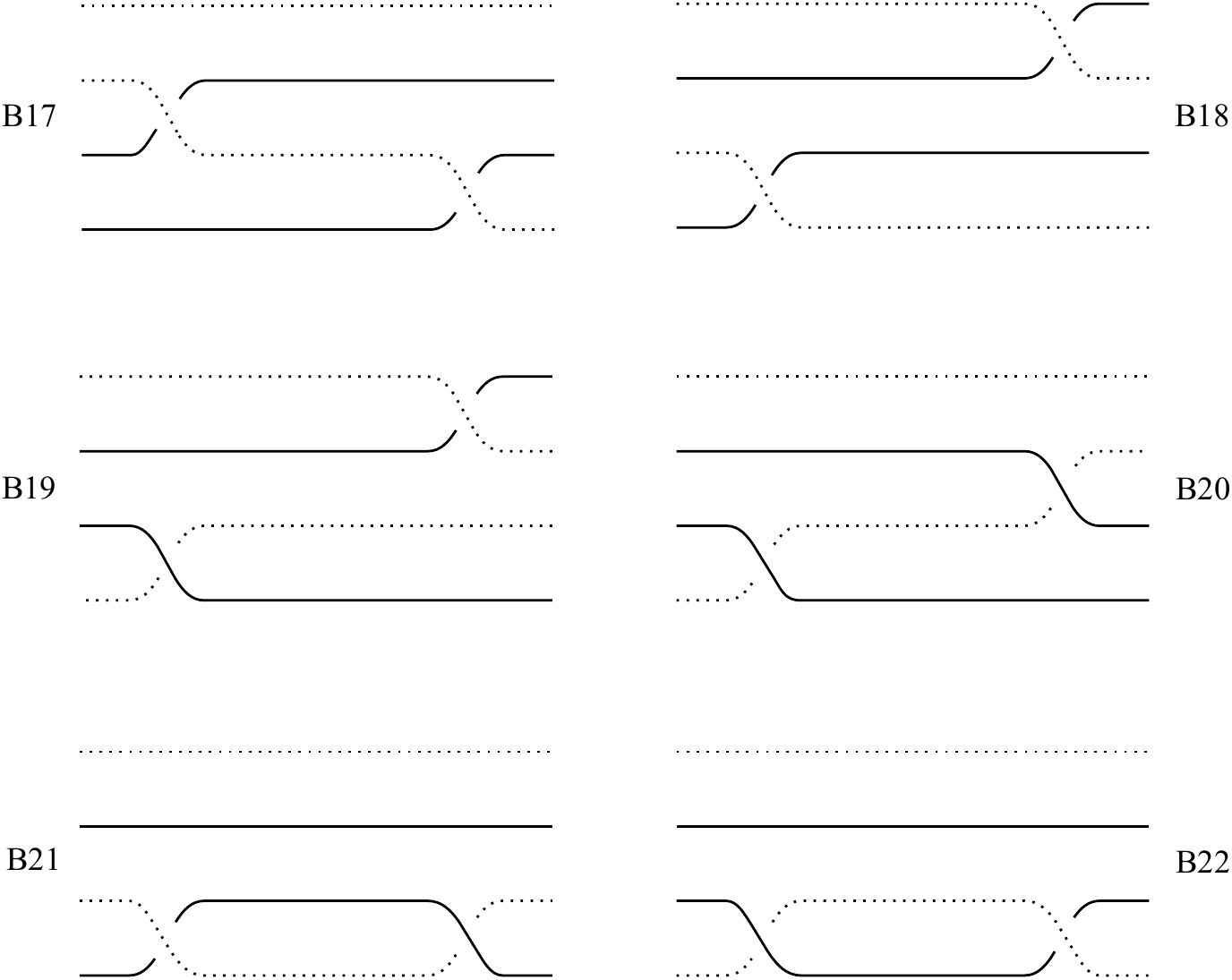}
	\end{center}
	\caption{Blocks of 2-component resolution (continue).}
	\label{b3}
\end{figure}

To be precise, we define a \textbf{block} to be a part of a 2-component resolution (up to regular isotopy) as in Figure \ref{b1} - \ref{b3}. Dotted lines and solid lines indicate how two components of eyes are positioned.

Now, some blocks are special for us. All the blocks in Figure \ref{clasp} are said to have a \textbf{clasp}. We also use a vertical dash line segment to represent a clasp. In a sense, the clasps can represent clasp intersections of disks bounded by eyes  (considered in $\mathbb{R}^3$) of the resolution of a normal ruling. In addition, a graph representing relationship between blocks as in Figure \ref{graph}.

\begin{figure}[h]
	\begin{center}
		\vspace{2.5cm}
		\includegraphics[width=6in]{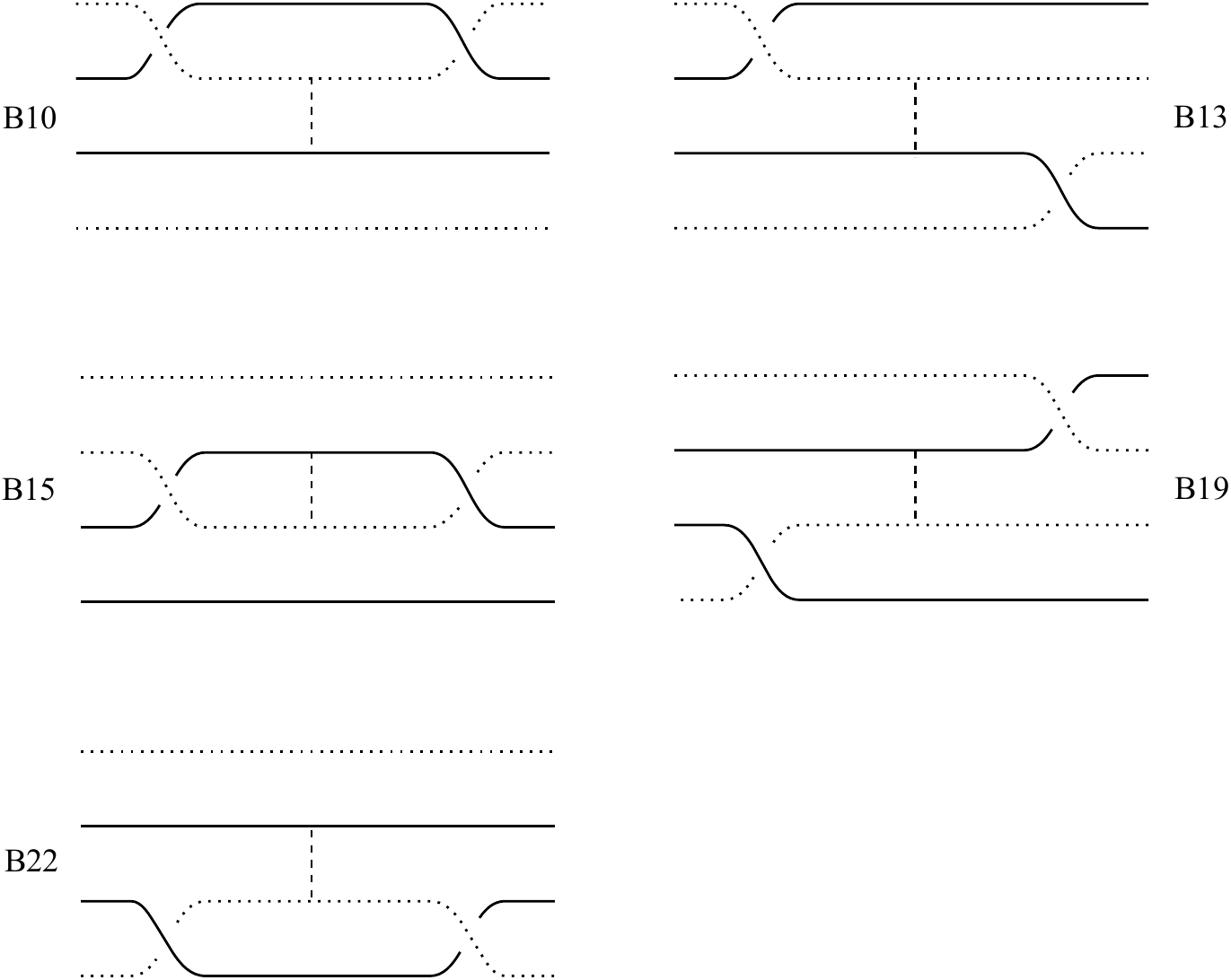}
	\end{center}
	\caption{Blocks with a clasp.}
	\label{clasp}
\end{figure}

\begin{figure}
	\begin{center}
		\includegraphics[height=6in]{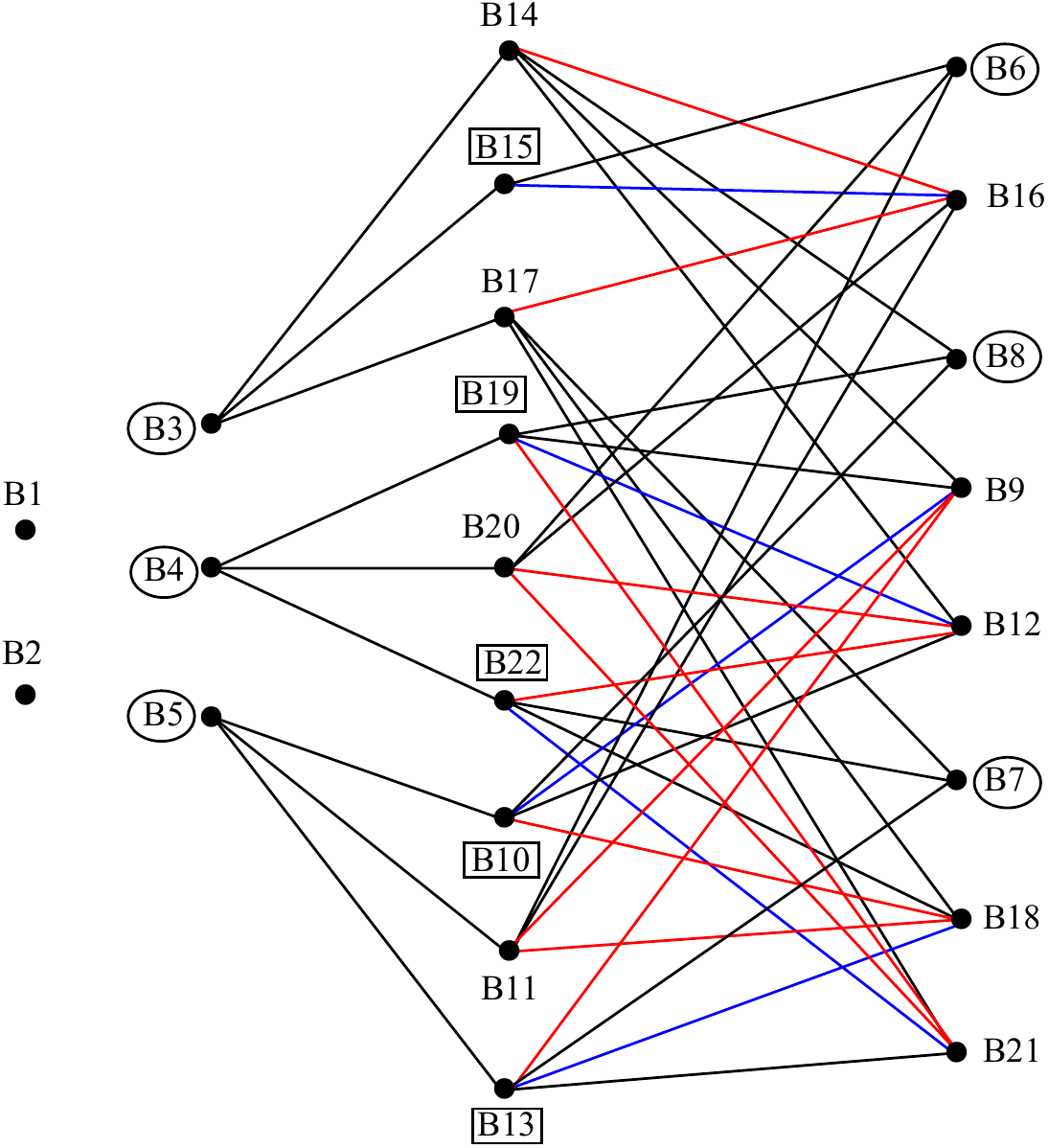}
	\end{center}
	\caption{A graph showing relationship between blocks. Boxed vertices are blocks with a clasp. Circled vertices are blocks which could only be the ends. Left vertex of a black edge is a block that can be placed on the left side of the block represented by right vertex of the same black edge. Left vertex of a red edge is a block that can be placed on the right side of the block represented by right vertex of the same red edge. Blue edges mean they are both black and red.}
	\label{graph}
\end{figure}

\newpage
For example, the front diagram in Figure \ref{be1} consists of 3 blocks, which are B4, B19 and B8. The middle block B19 has a clasp as shown in Figure \ref{bec}.

\begin{figure}[h]
	\begin{center}
		\vspace{1.5cm}
		\includegraphics[width=5.5in]{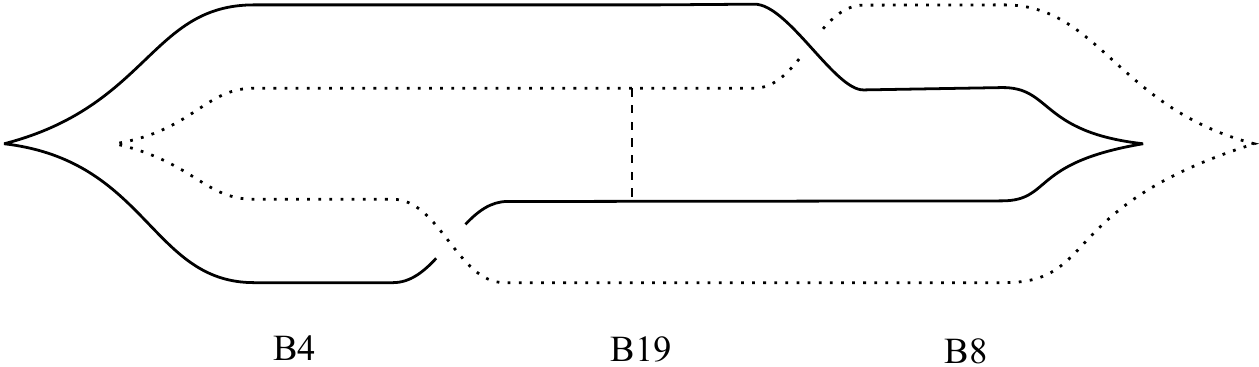}
	\end{center}
	\caption{The middle block has a clasp.}
	\label{bec}
\end{figure}

In a sense, the clasps can represent clasp intersections of disks bounded by eyes of the resolution of a normal ruling. Also, notice that, for those blocks with a clasp, two components must be non-nested\footnote{If two components are disjoint, we say they are nested.} in the middle part. In addition, a block with non-nested components could have no clasp if its two crossings all come from two horizontal strands intersecting with a single strand, e.g. B11, B14, B17 and B20.
\vspace{0.4cm}
\noindent
\\\textbf{3.2 Parity of Normal Rulings.}
Suppose we have the resolution of a normal ruling $\rho$. We may count the number of clasps of each pair of eyes. After that, we sum them up and refer to this amount as the \textbf{number of clasps of $\rho$}. We define the \textbf{parity of $\rho$} by saying that $\rho$ is \textbf{odd} (or \textbf{even}) if its number of clasps is odd (or even).

For example, in Figure \ref{c20a}, it is easy to check that the only normal ruling for the top-left front diagram is $\{a\}$. Its resolution is the top-right. The pair of eye 1 and eye 2 is shown in the bottom-left. This pair consists of 3 blocks, which are B3, B15 and B6. The middle block B15 has a clasp. So this pair contributes 1 clasp to the normal ruling. Next, the pair of eye 1 and eye 3 is shown in the bottom-middle. This pair is the block B1 so it gives no clasp. Finally, the pair of eye 2 and eye 3 is shown in the bottom-right. This pair contains B3, B15 and B6. So it gives a clasp. Hence the normal ruling has 2 clasps in total.

\begin{figure}
\begin{center}
\includegraphics{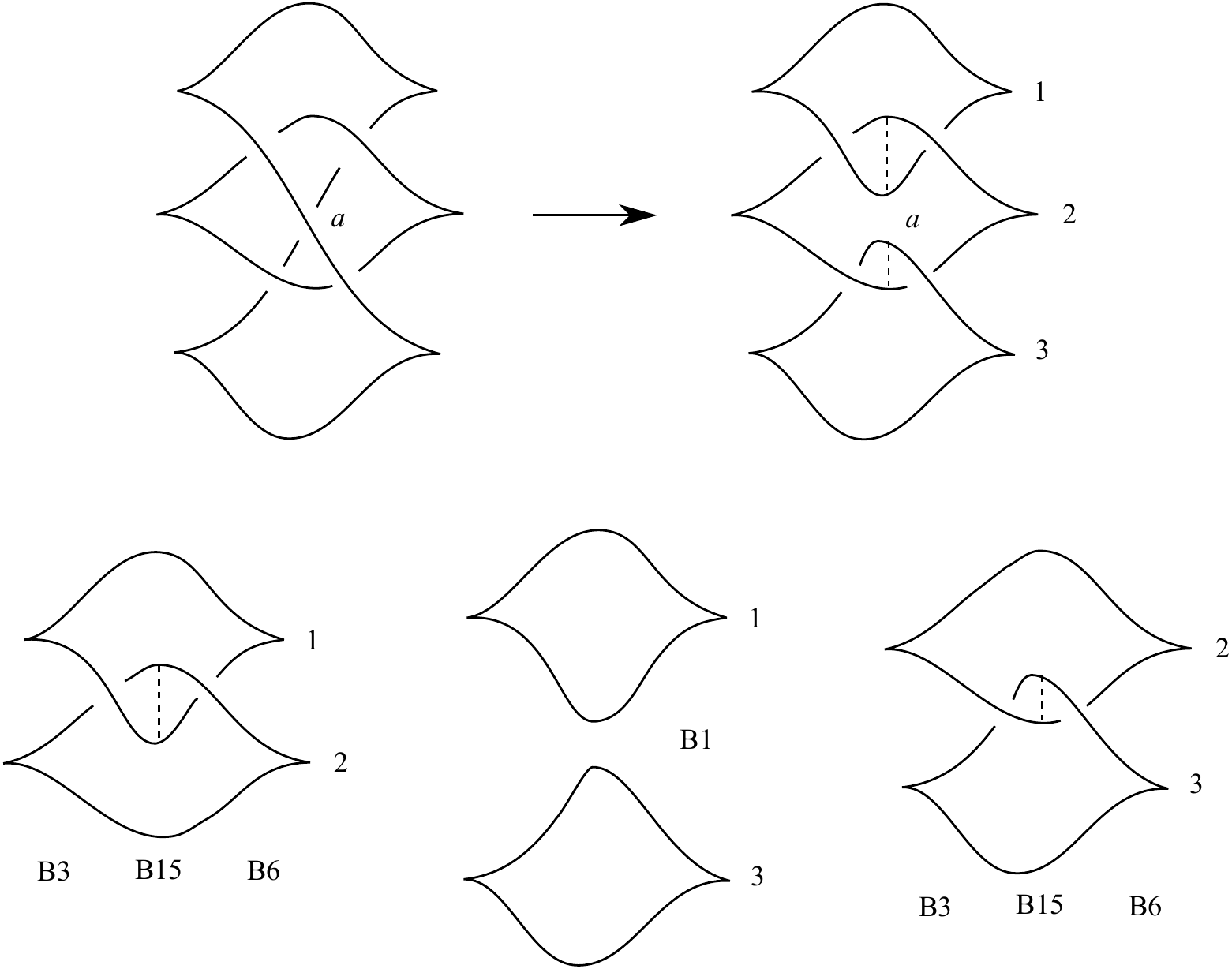}
\end{center}
\caption{A normal ruling with 2 clasps.}
\label{c20a}
\end{figure}

Next, in Figure \ref{c20b}, the only normal ruling is $\{a\}$. Its resolution is the top-right. The pair of eye 1 and eye 2 is shown in the bottom-left. This pair consists of 5 blocks, which are B3, B17, B21, B20 and B6. So this pair contributes 0 clasp to the normal ruling. Next, the pair of eye 1 and eye 3 is shown in the bottom-middle. This pair is the block B1 so it gives no clasp. Finally, the pair of eye 2 and eye 3 is shown in the bottom-right. This pair is also B1. So it gives 0 clasp. Hence, the normal ruling has 0 clasp in this case.

\begin{figure}
\begin{center}
\includegraphics{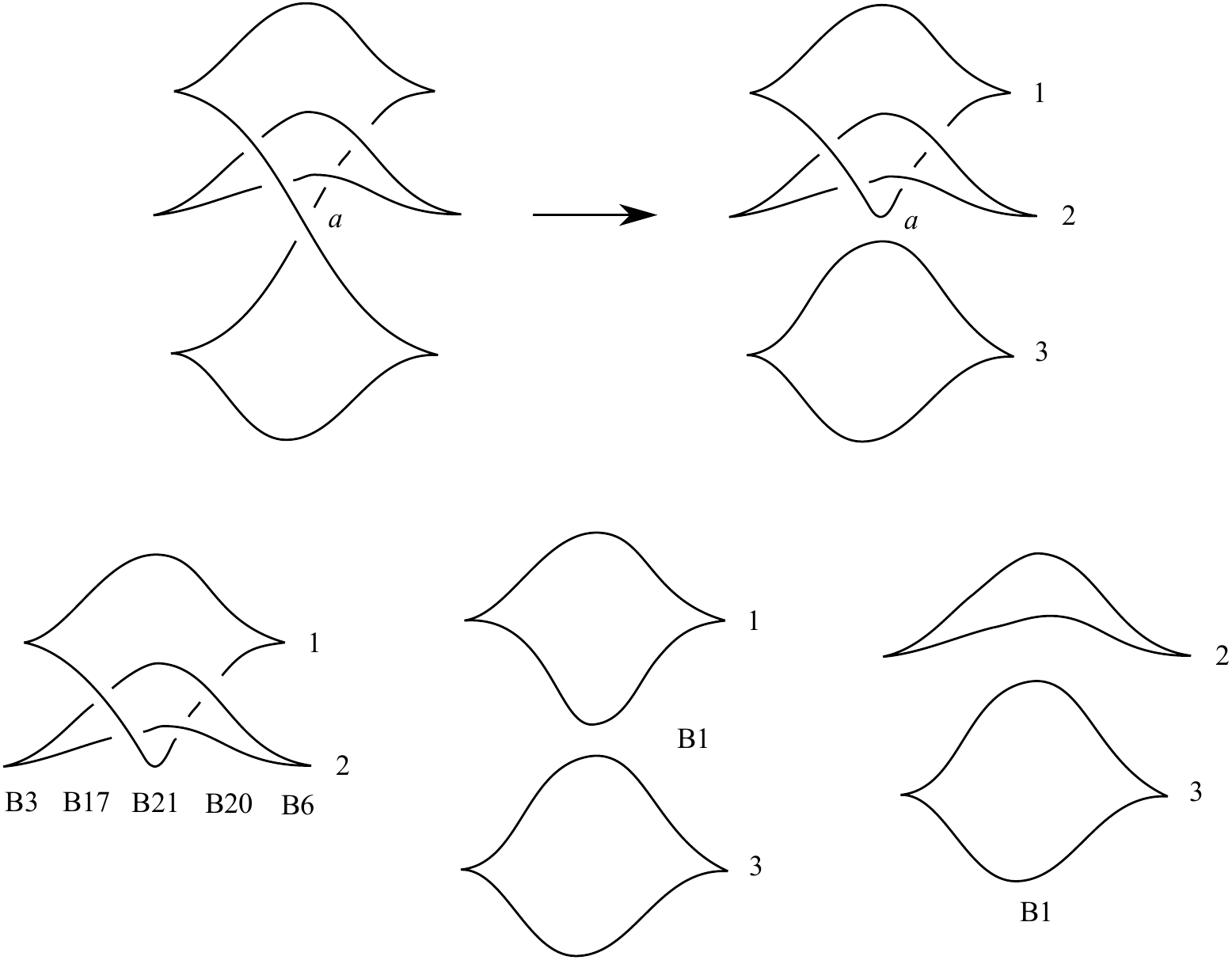}
\end{center}
\caption{A normal ruling with 0 clasp.}
\label{c20b}
\end{figure}

Notice that the top-left front diagram of Figure \ref{c20b} is the result of applying one R3 to the top-left front diagram of Figure \ref{c20a}. Their normal rulings have different number of clasps, but the parity is the same, i.e. both normal rulings are even. Next, we prove that the number of clasps of a resolution is invariant under regular isotopy, R2 and R3 of the resolution.

\begin{Lem}
\label{lres}
Suppose we have the resolution of a normal ruling. Then the number of clasps of the resolution is invariant under regular isotopy, R2 and R3 of the resolution.
\end{Lem}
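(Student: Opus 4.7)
The plan is to use additivity: the total clasp count equals the sum, over unordered pairs of distinct eyes, of the clasps in the 2-component subresolution they form. So it suffices to check that each local move preserves the clasp count on every affected pair. A conceptual sanity check, as noted in the text just before the lemma, is that a clasp represents an unavoidable clasp-intersection of the disks bounded by two eyes in $\mathbb{R}^3$, and regular isotopy, R2, and R3 of the resolution are realized by ambient isotopies of those disks; so the pairwise count ought to be invariant. The combinatorial verification is what follows.

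For R3 of the resolution, I first observe that since no eye has self-intersections (condition (1) of a normal ruling), every crossing of the resolution joins strands of two different eyes. In an R3 each pair of the three participating strands meets in a crossing, so if two of these strands belonged to the same eye we would create a forbidden self-intersection; hence the three strands lie on three distinct eyes $E_1, E_2, E_3$. For each pair $\{E_i, E_j\}$, R3 exchanges a single crossing for a single crossing, and no block containing only one crossing carries a clasp, since clasp blocks in Figure \ref{clasp} all involve two interacting crossings. So each pairwise clasp count, and hence the total, is unchanged.

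For R2 the same no-self-intersection argument forces the two strands into two distinct eyes $E_1, E_2$; all other pairs are untouched. Locally, R2 inserts or deletes a two-crossing bubble between the two eye strands. Matching this bubble against Figures \ref{b1}–\ref{b3} identifies it with one of the non-clasp two-crossing blocks such as B11 or B20 — equivalently, the two strands can be pulled apart within the bubble, so no clasp intersection is created. Hence R2 preserves the clasp count.

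For regular isotopy, the argument already used in the proof of Theorem \ref{Chekanov} reduces us to an interchange of the $x$-coordinates of two adjacent crossings. If the two crossings share no strand they commute freely and every pair of eyes is unaffected. If they share a strand, they involve either two or three distinct eyes, and I would stratify according to the block adjacency graph of Figure \ref{graph}: for every legal left-to-right pair of blocks whose join contains the two swapped crossings, check directly that the swap yields another legal pair of blocks and compare clasp counts. The main obstacle is this finite but sizable enumeration — one must traverse every admissible edge of the graph in Figure \ref{graph} and verify either that a clasp block is swapped to a clasp block (and non-clasp to non-clasp) or that two blocks exchange with the same combined number of clasp marks. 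Once this check is complete, additivity over pairs of eyes delivers the lemma.
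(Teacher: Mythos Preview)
Your overall plan---reduce to a single move and use additivity over pairs of eyes---is exactly the paper's approach. Two of your three local checks, however, are not actually established.

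For R3 you assert that for each pair $\{E_i,E_j\}$ the move ``exchanges a single crossing for a single crossing, and no block containing only one crossing carries a clasp.'' That last clause is a non sequitur: the lone crossing inside the R3 window can pair with a crossing of the \emph{same} two eyes lying outside the window to form a block, and a priori sliding it could change that block's type (clasp to non-clasp or vice versa). The correct observation is that once the third strand $E_k$ is deleted, the R3 move restricted to the pair $\{E_i,E_j\}$ does not reorder that crossing with respect to any other crossing of the pair, so the 2-component picture and its block decomposition are literally unchanged. (The paper absorbs this into the sentence that R2 and R3 ``never create/destroy a block with clasp.'') Your R2 argument has a milder version of the same issue: identifying the inserted bubble with a non-clasp block such as B11 or B20 handles the new middle block, but you must also note that the blocks adjacent to the bubble keep their type, which is again immediate once one restricts to the pair $\{E_1,E_2\}$.

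The more serious gap is the regular-isotopy case. You describe the enumeration (``for every legal left-to-right pair of blocks whose join contains the two swapped crossings, check directly\ldots'') and then write ``Once this check is complete\ldots''. That check \emph{is} the content of the lemma here; the paper carries it out in Figure~\ref{regc}, verifying for each possible swap of two adjacent crossings of a 2-component resolution that the clasp count is preserved. Describing the case split without performing it leaves the heart of the argument undone.
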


\begin{proof}
By induction, it is enough to consider only when we apply a single move of regular isotopy, R2 or R3 to the resolution. The only kind of regular isotopy that could change blocks is the one that interchanges $x$-coordinates of two crossings of a block as in Figure \ref{reg}. It is easy to see that the number of clasps is invariant under this move as in Figure \ref{regc}. Next, it is not hard to see that the number of clasps is also the same under an application of a Legendrian Reidemeister move R2 or R3 since none of them will create/destroy a block with clasp.
\end{proof}

\begin{figure}
\begin{center}
\includegraphics[width=6in]{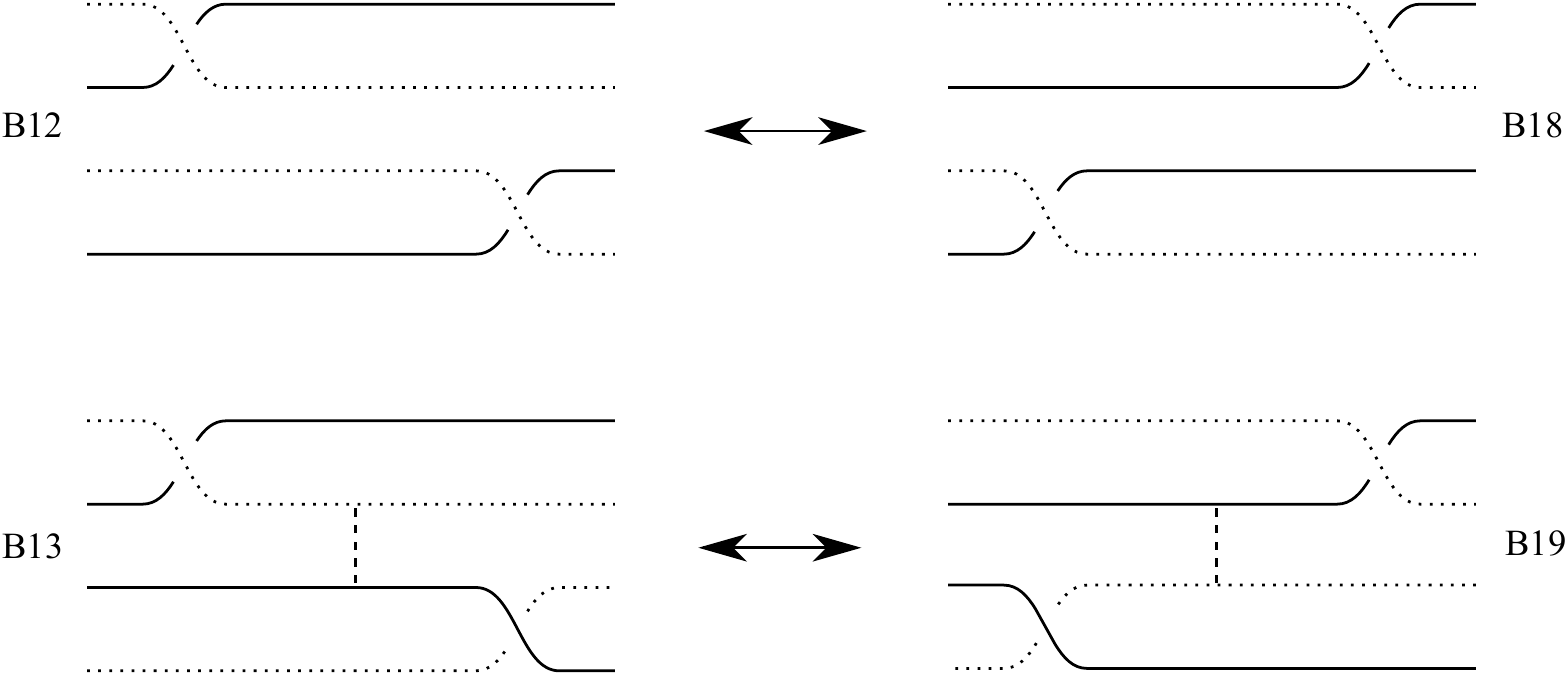}
\end{center}
\caption{The number of clasps is invariant under regular isotopy of resolutions.}
\label{regc}
\end{figure}

\noindent
\textbf{3.3 Enhanced Cuts.}
Let $\rho$ be a normal ruling. An \textbf{enhanced cut} is a modification on an eye of the resolution of $\rho$ that could cut through obstructing horizontal strands, as in Figure \ref{ec1}. Note that under regular isotopy, we may assume that there is no other crossings and cusps in the vertical strips we performing enhanced cut.

\begin{figure}[h]
	\begin{center}
		\vspace{0.6cm}
		\includegraphics[width=5in]{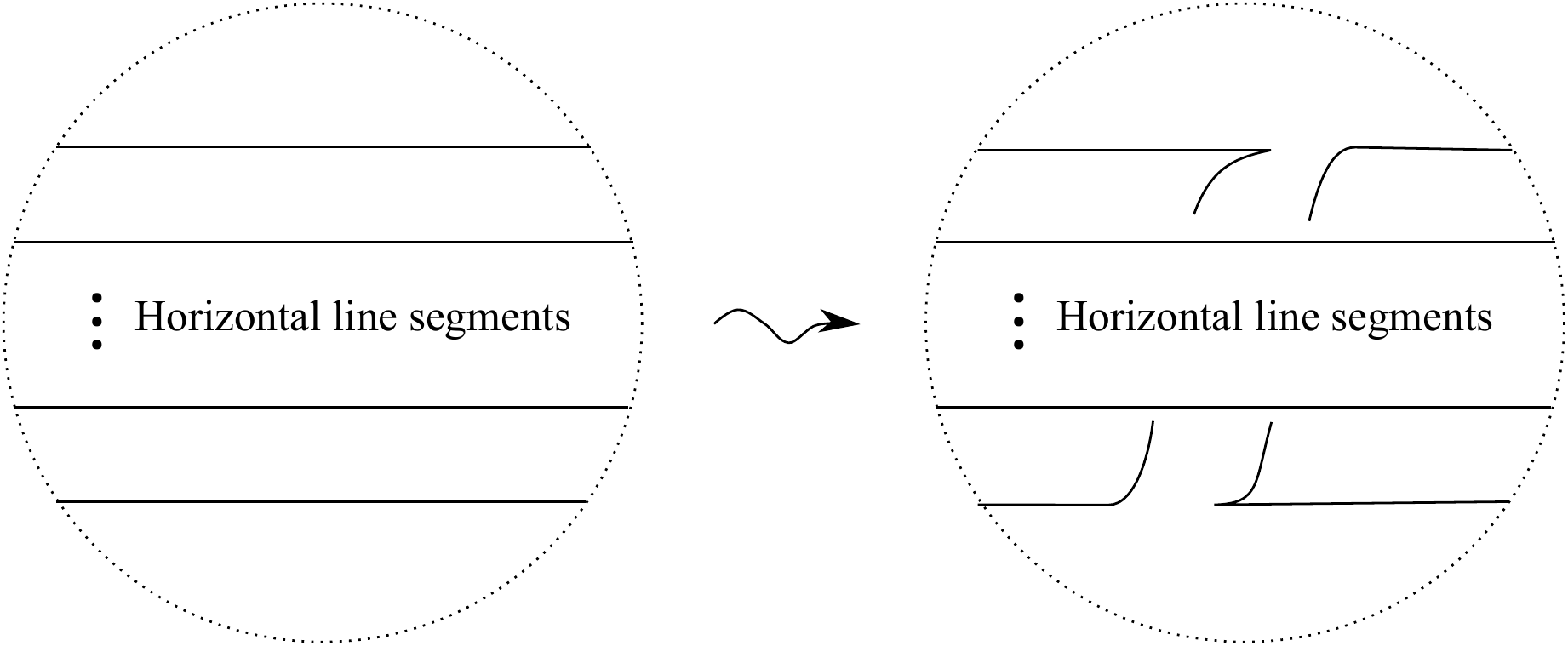}
	\end{center}
	\caption{Enhanced cut.}
	\label{ec1}
\end{figure}

Enhanced cuts have an interesting effect on the number of clasps of a normal ruling.

\begin{Lem}
\label{lc}
Let $\rho$ be a normal ruling. Applying an enhanced cut to an eye of the resolution of $\rho$ does not change the parity of $\rho$.
\end{Lem}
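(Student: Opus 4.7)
The plan is to argue locally. An enhanced cut is supported in a vertical strip $V$, and by regular isotopy (together with Lemma \ref{lres}) I may assume that $V$ contains no cusps and no crossings other than the ones the cut creates. Outside $V$ the block picture of the resolution is unchanged, and any pair of eyes not involving the modified eye $E$ has the same blocks before and after the cut. Consequently, the total change in the number of clasps is a sum, over all horizontal strands cut by the modification, of the change in the clasp count between $E$ and the eye $E'$ that owns that strand. So it suffices to show that this per-strand change is always even.

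Fix a horizontal strand of $E'$ cut by the enhanced cut. The cut replaces a segment of $E$ that avoids this strand by one that crosses it twice; both new crossings are switches, and they sit at the top and bottom of a ``pocket'' that $E$ now encloses around the strand. I would then enumerate the possible local block pictures for the pair $(E,E')$ at these two new switches, using the catalogue in Figures \ref{b1}--\ref{b3} and the clasp list in Figure \ref{clasp}. The enumeration splits according to whether the cut opens $E$ upward or downward and according to the vertical position of the cut strand among the other horizontal strands of $E'$ present in $V$. In every case the two new switches come as a vertically symmetric pair, and I expect to see that either both of the resulting blocks contain a clasp or neither does, giving a per-strand change of $0$ or $2$.

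Summing these even per-strand changes over all strands cut by the enhanced cut shows that the total change in the number of clasps of $\rho$ is even, and hence the parity of $\rho$ is preserved. The main obstacle, as usual for arguments of this flavour, is the case analysis at the last step: confirming that every entry/exit configuration of a new switch pair really does fall into the ``$0$ or $2$'' dichotomy, and that no additional clasps can be introduced between $E$ and strands of a \emph{third} eye $E''$ passing through $V$ that are not themselves cut (here one checks that such a strand sees the same relative nesting data with respect to $E$ inside $V$ before and after the cut). Once this local bookkeeping is done, the parity statement follows from additivity.
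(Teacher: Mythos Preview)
Your high-level plan---localize, reduce to pairs of eyes involving the cut eye $E$, and then enumerate local configurations---is exactly what the paper does.  The trouble is the granularity you choose for the enumeration.

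You propose to sum ``over all horizontal strands cut by the modification'' and to attribute to each strand a change in the clasp count between $E$ and the eye owning that strand.  But clasps are not attached to strands: they are attached to \emph{blocks} in the $2$-component picture of a pair $(E,E')$, and a block records the position of \emph{both} strands of $E'$ relative to both strands of $E$.  When the cut lands inside a block where both strands of $E'$ lie between the strands of $E$ (e.g.\ $E$ nests $E'$ locally), both strands are obstructing and the effect on the $(E,E')$ clasp count is not a sum of two independent ``per-strand'' contributions.  So the quantity you want to sum is not well defined, and the dichotomy ``both new blocks have a clasp or neither does'' cannot be checked strand-by-strand.  The paper handles this correctly by working block-by-block: it first disposes of cusp-ended blocks (Figure~\ref{b1}) in one line, and then runs through the fourteen remaining block types (Figures~\ref{c1}--\ref{c14}), showing in each that the change in clasp count is $0$ or $+2$.

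There is a second, smaller issue in your description of the move itself.  The enhanced cut does not leave $E$ as a single eye ``enclosing a pocket'': it splits $E$ into two eyes by inserting a right cusp and a left cusp, whose strands cross the obstructing horizontals.  The new crossings are therefore ordinary crossings in the resolution, not switches, and after the cut one must compare the old $(E,E')$ count with the combined $(E_{\mathrm{left}},E')$, $(E_{\mathrm{right}},E')$ and $(E_{\mathrm{left}},E_{\mathrm{right}})$ counts.  Once you adopt the per-block viewpoint this bookkeeping is exactly what the fourteen cases encode; with the per-strand viewpoint it is invisible.
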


\begin{proof}
The statement is clearly true when the resolution of the normal ruling has 1 component. So we may assume that the resolution has 2 or more components. Since we count the number of clasps between each pair of eyes, we only need to perform an enhanced cut to the middle part of each block with a rule that the cut must happen between the upper and the lower strands of an eye. Then, we observe changes in the number of clasps. First, applying an enhanced cut (with respect to eyes) to the middle part of blocks with one end having cusps (as in Figure \ref{b1}) will not change the number of clasps.

Next, we prove case by case as in Figure \ref{c1} - \ref{c14}.

Case 1 (see Figure \ref{c1}): The original block has no clasp. After modifications, there are 6 blocks for the top-right diagram. None of them has a clasp. Thus, the top-right diagram has the same number of clasps. The bottom diagram also has the same number of clasps.

Case 2 (see Figure \ref{c2}): The original block has 1 clasp. After modifications, there are 4 blocks for the top-right diagram. One of them has a clasp. Thus, the top-right diagram has the same number of clasps. Also, there are 4 blocks for the bottom. One of them has a clasp. Hence, the number of clasps is preserved.

Case 3 (see Figure \ref{c3}): The original block has no clasp. After modifications, there are 4 blocks for the top-right diagram. None of them has a clasp. Thus, the top-right diagram has the same number of clasps. Also, there are 4 blocks for the bottom. Two of them have a clasp. Hence, the number of clasps is increased by 2.

Case 4 (see Figure \ref{c4}): The original block has no clasp. After modifications, there are 6 blocks for the top-right diagram. None of them has a clasp. Thus, the top-right diagram has the same number of clasps. The bottom diagram also has the same number of clasps.

Case 5 (see Figure \ref{c5}): The original block has 1 clasp. After modifications, there are 4 blocks for the top-right diagram. One of them has a clasp. Thus, the top-right diagram has the same number of clasps. Also, there are 4 blocks for the bottom. One of them has a clasp. Hence, the number of clasps is preserved.

Case 6 (see Figure \ref{c6}): The original block has no clasp. After modifications, there are 4 blocks for the top-right diagram. Two of them have a clasp. Thus, the top-right diagram has the number of clasps increased by 2. Also, there are 4 blocks for the bottom. None of them has a clasp. Hence, the number of clasps is preserved.

Case 7 (see Figure \ref{c7}): The original block has 1 clasp. After modifications, there are 4 blocks for the top-right diagram. One of them has a clasp. Thus, the top-right diagram has the same number of clasps. Also, there are 4 blocks for the bottom. One of them has a clasp. Hence, the number of clasps is preserved.

Case 8 (see Figure \ref{c8}): The original block has no clasp. After modifications, it is clear that both the top-right and the bottom diagrams also have the same number of clasps.

Case 9 (see Figure \ref{c9}): The original block has no clasp. After modifications, there are 4 blocks for the top-right diagram. Two of them have a clasp. Thus, the top-right diagram has the number of clasps increased by 2. Also, there are 4 blocks for the bottom. None of them has a clasp. Hence, the number of clasps is preserved.

Case 10 (see Figure \ref{c10}): The original block has no clasp. After modifications, there are 6 blocks for the top-right diagram. None of them has a clasp. Thus, the top-right diagram has the same number of clasps. The bottom diagram also has the same number of clasps.

Case 11 (see Figure \ref{c11}): The original block has 1 clasp. After modifications, there are 4 blocks for the top-right diagram. One of them has a clasp. Thus, the top-right diagram has the same number of clasps. Also, there are 4 blocks for the bottom. One of them has a clasp. Hence, the number of clasps is preserved.

Case 12 (see Figure \ref{c12}): The original block has no clasp. After modifications, there are 4 blocks for the top-right diagram. None of them has a clasp. Thus, the top-right diagram has the same number of clasps. Also, there are 4 blocks for the bottom. Two of them have a clasp. Hence, the number of clasps is increased by 2.

Case 13 (see Figure \ref{c13}): The original block has no clasp. After modifications, there are 6 blocks for the top-right diagram. None of them has a clasp. Thus, the top-right diagram has the same number of clasps. The bottom diagram also has the same number of clasps.

Case 14 (see Figure \ref{c14}): The original block has 1 clasp. After modifications, there are 4 blocks for the top-right diagram. One of them has a clasp. Thus, the top-right diagram has the same number of clasps. Also, there are 4 blocks for the bottom. One of them has a clasp. Hence, the number of clasps is preserved. 
\end{proof}

\begin{figure}
	\begin{center}
		\includegraphics[width=6in]{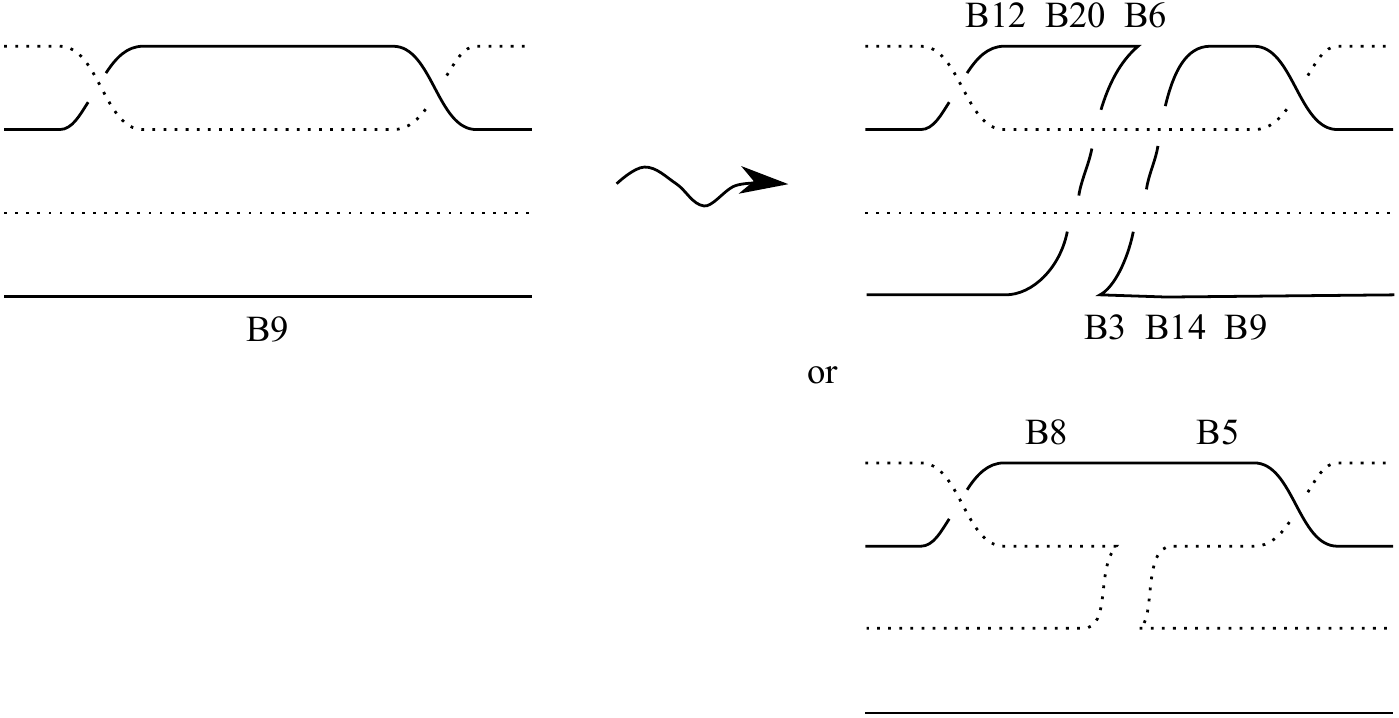}
	\end{center}
	\caption{Case 1.}
	\label{c1}
\end{figure}

\begin{figure}
	\begin{center}
		\includegraphics[width=6in]{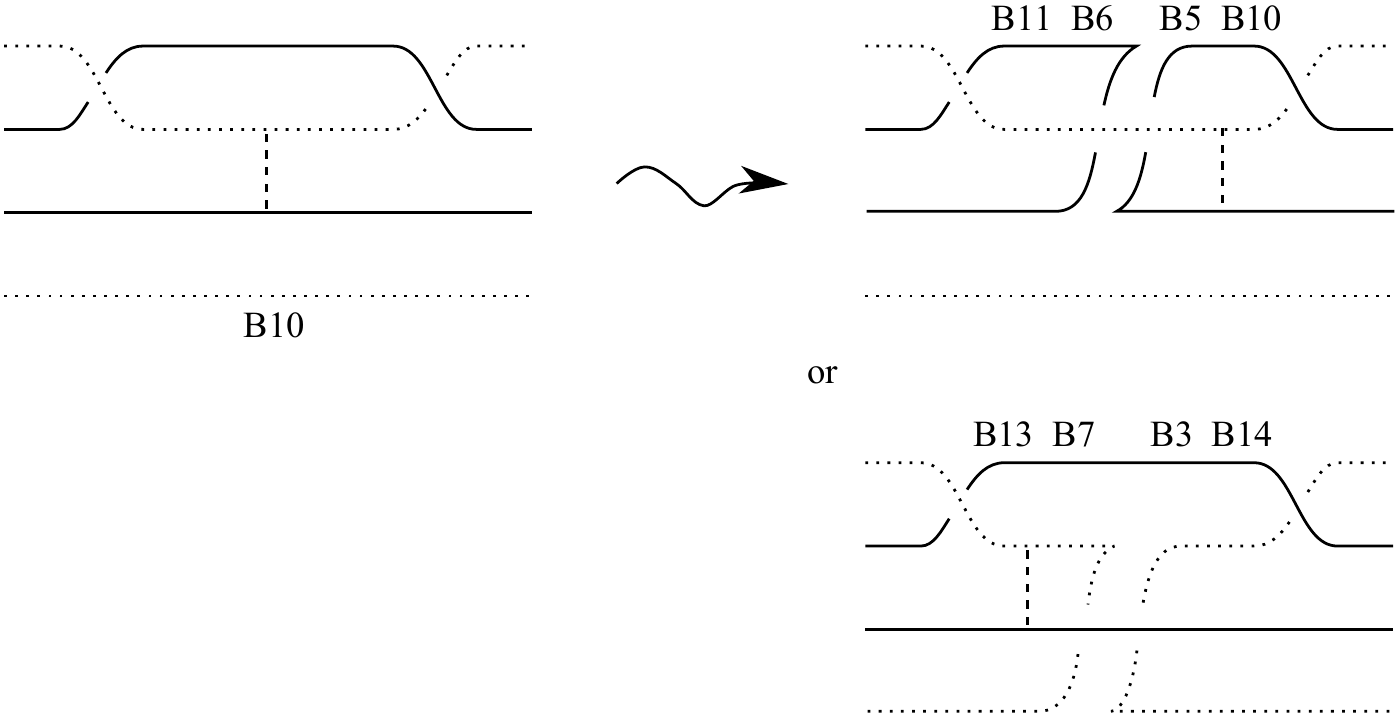}
	\end{center}
	\caption{Case 2.}
	\label{c2}
\end{figure}

\begin{figure}
	\begin{center}
		\includegraphics[width=6in]{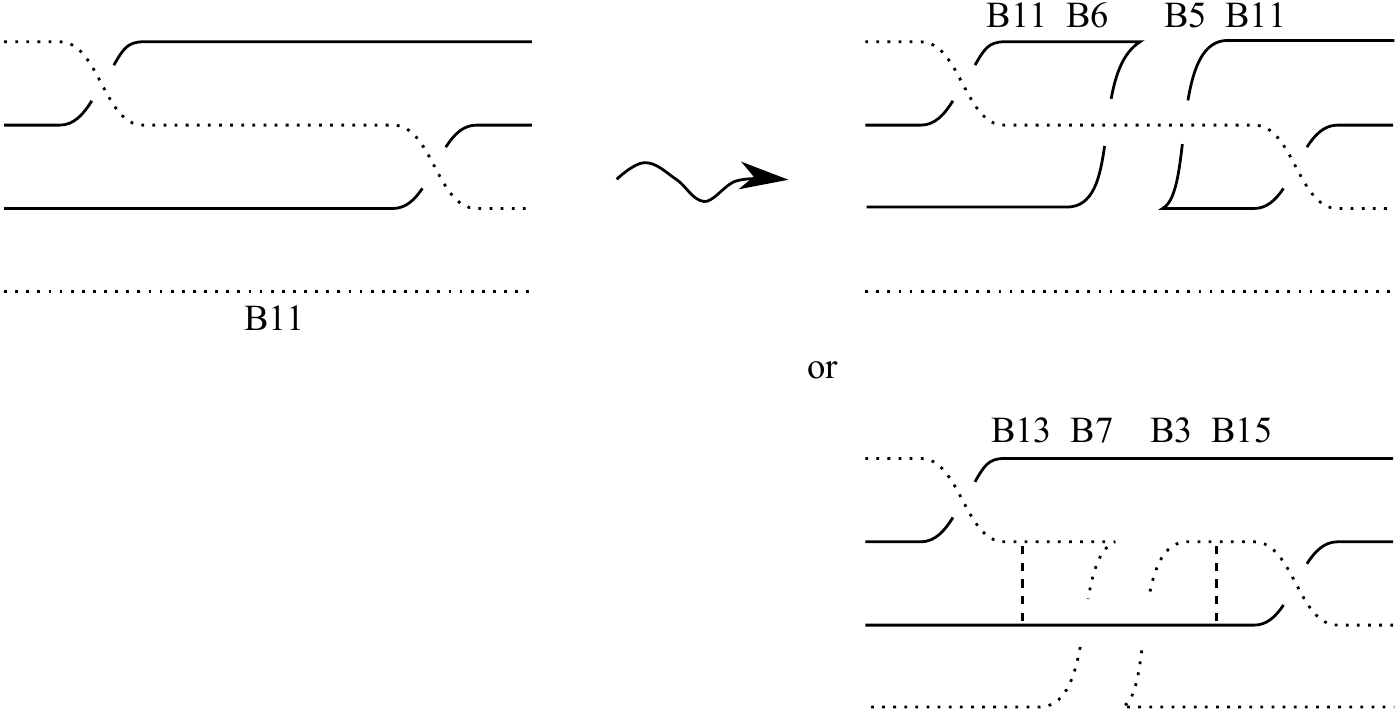}
	\end{center}
	\caption{Case 3.}
	\label{c3}
\end{figure}

\begin{figure}
	\begin{center}
		\includegraphics[width=6in]{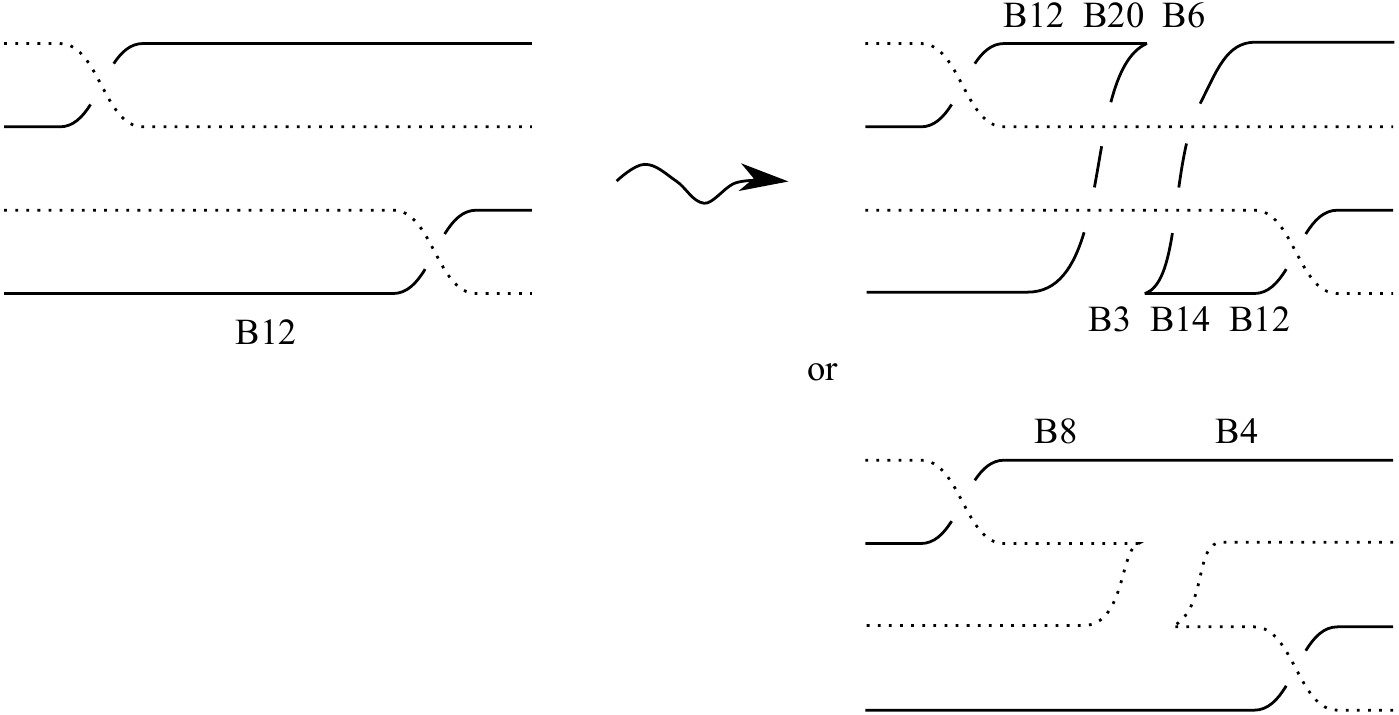}
	\end{center}
	\caption{Case 4.}
	\label{c4}
\end{figure}

\begin{figure}
	\begin{center}
		\includegraphics[width=6in]{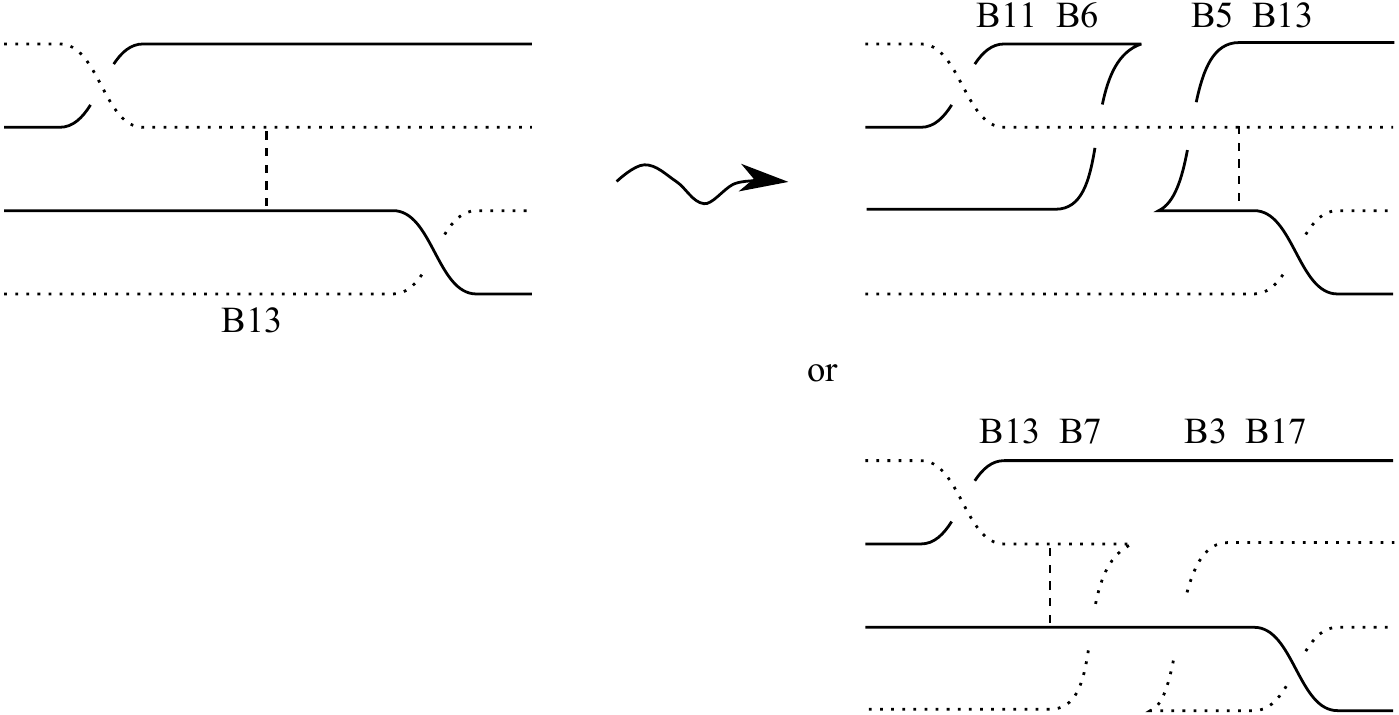}
	\end{center}
	\caption{Case 5.}
	\label{c5}
\end{figure}

\begin{figure}
	\begin{center}
		\includegraphics[width=6in]{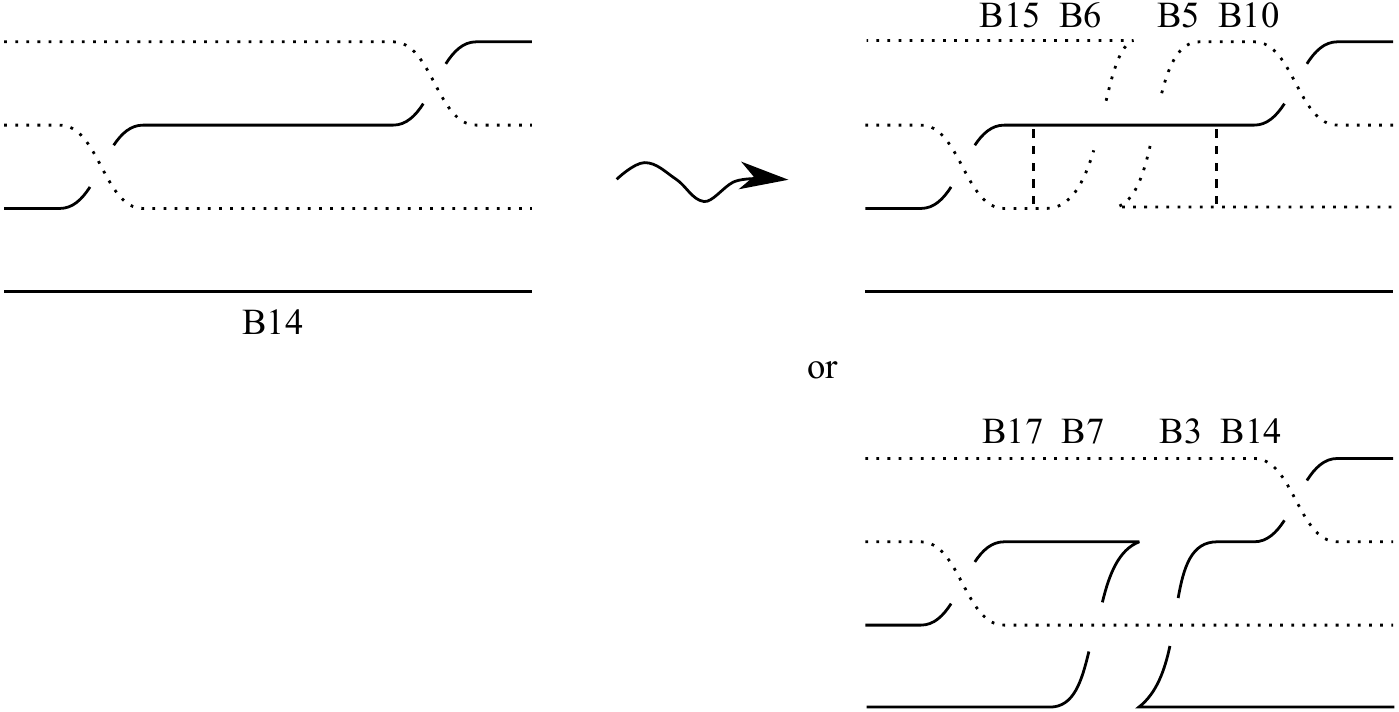}
	\end{center}
	\caption{Case 6.}
	\label{c6}
\end{figure}

\begin{figure}
	\begin{center}
		\includegraphics[width=6in]{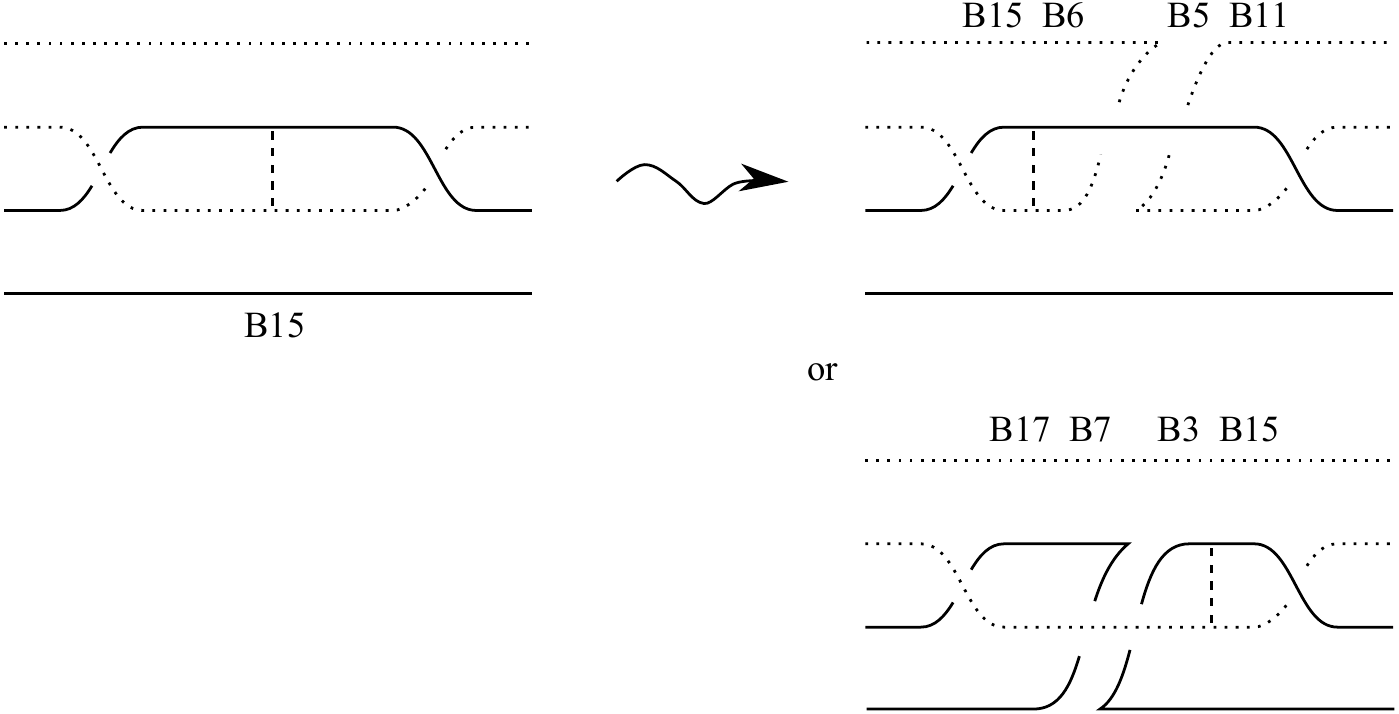}
	\end{center}
	\caption{Case 7.}
	\label{c7}
\end{figure}

\begin{figure}
	\begin{center}
		\includegraphics[width=6in]{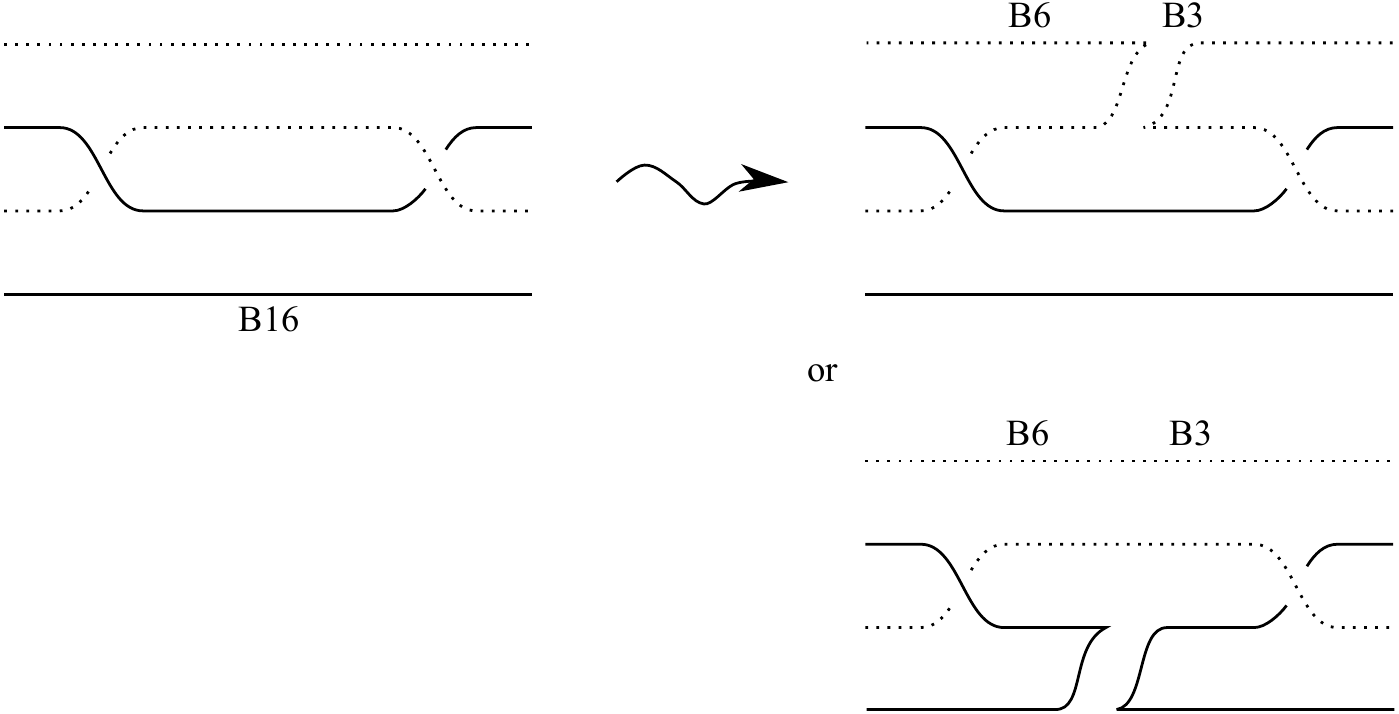}
	\end{center}
	\caption{Case 8.}
	\label{c8}
\end{figure}

\begin{figure}
	\begin{center}
		\includegraphics[width=6in]{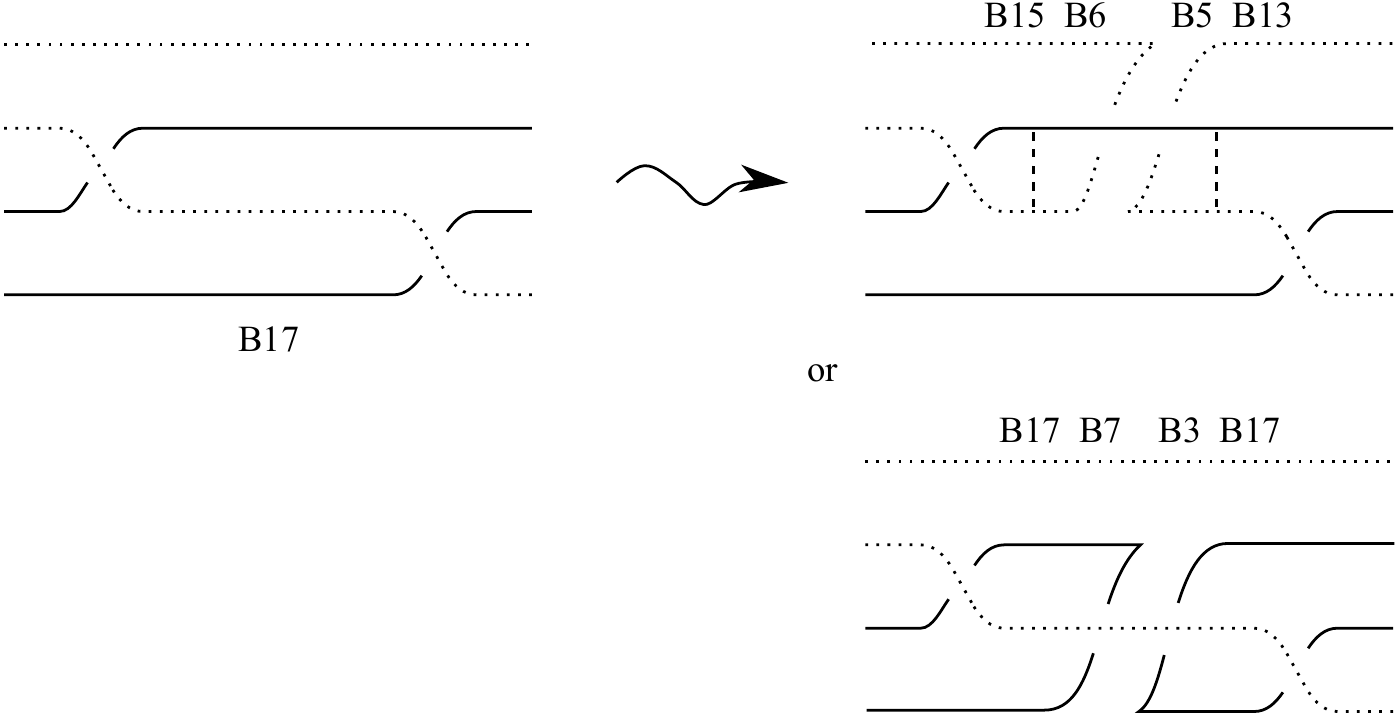}
	\end{center}
	\caption{Case 9.}
	\label{c9}
\end{figure}

\begin{figure}
	\begin{center}
		\includegraphics[width=6in]{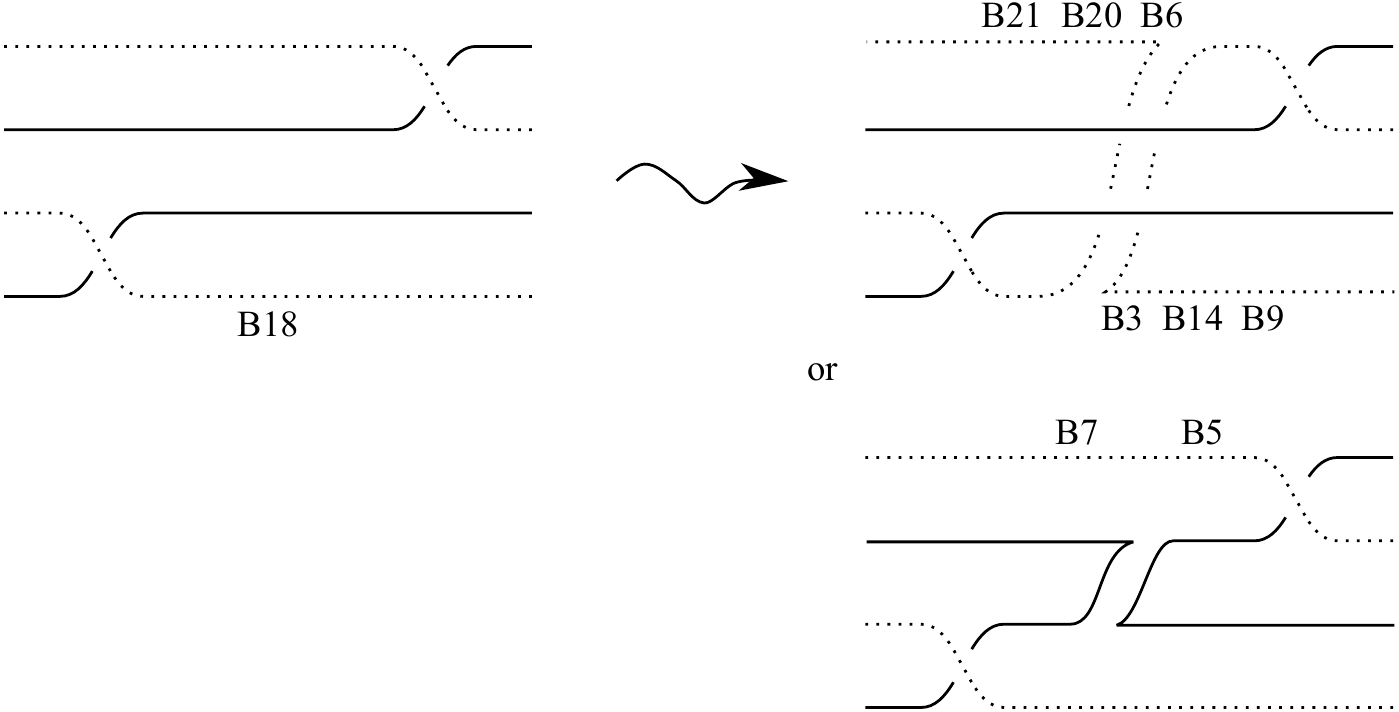}
	\end{center}
	\caption{Case 10.}
	\label{c10}
\end{figure}

\begin{figure}
	\begin{center}
		\includegraphics[width=6in]{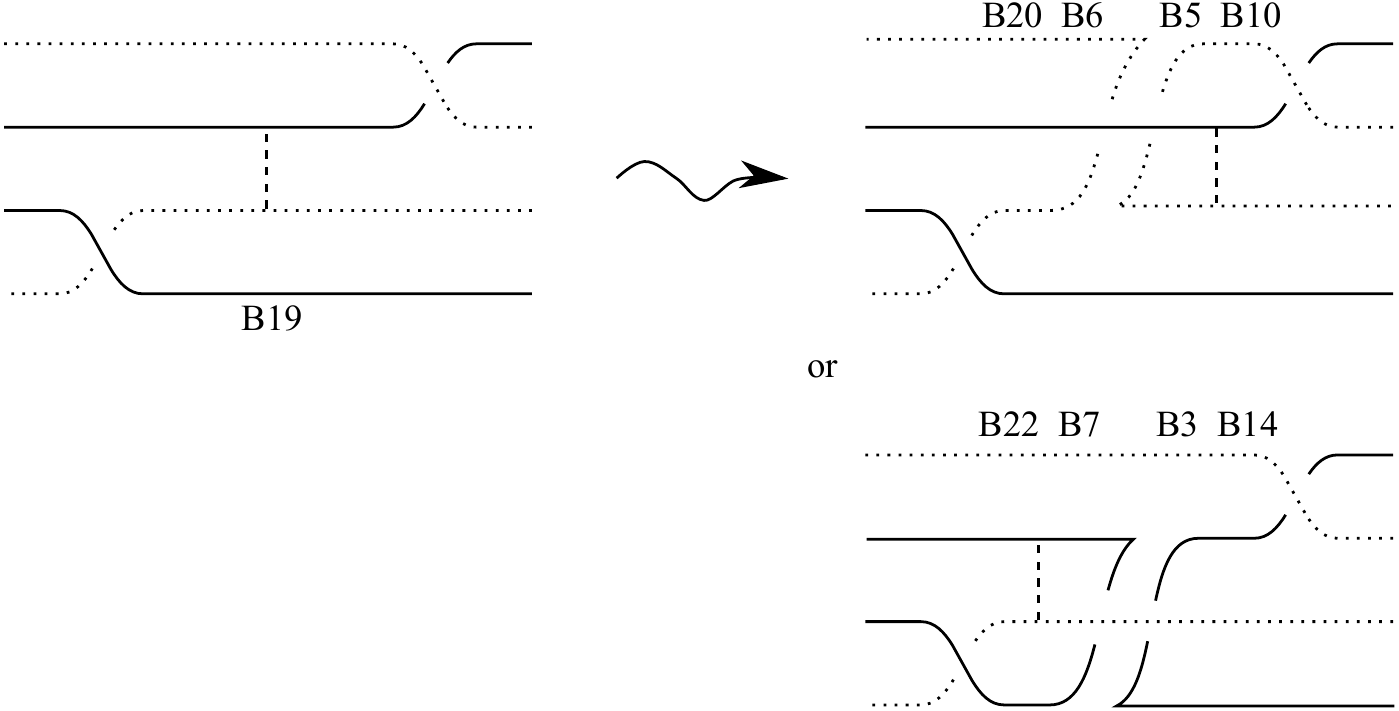}
	\end{center}
	\caption{Case 11.}
	\label{c11}
\end{figure}

\begin{figure}
	\begin{center}
		\includegraphics[width=6in]{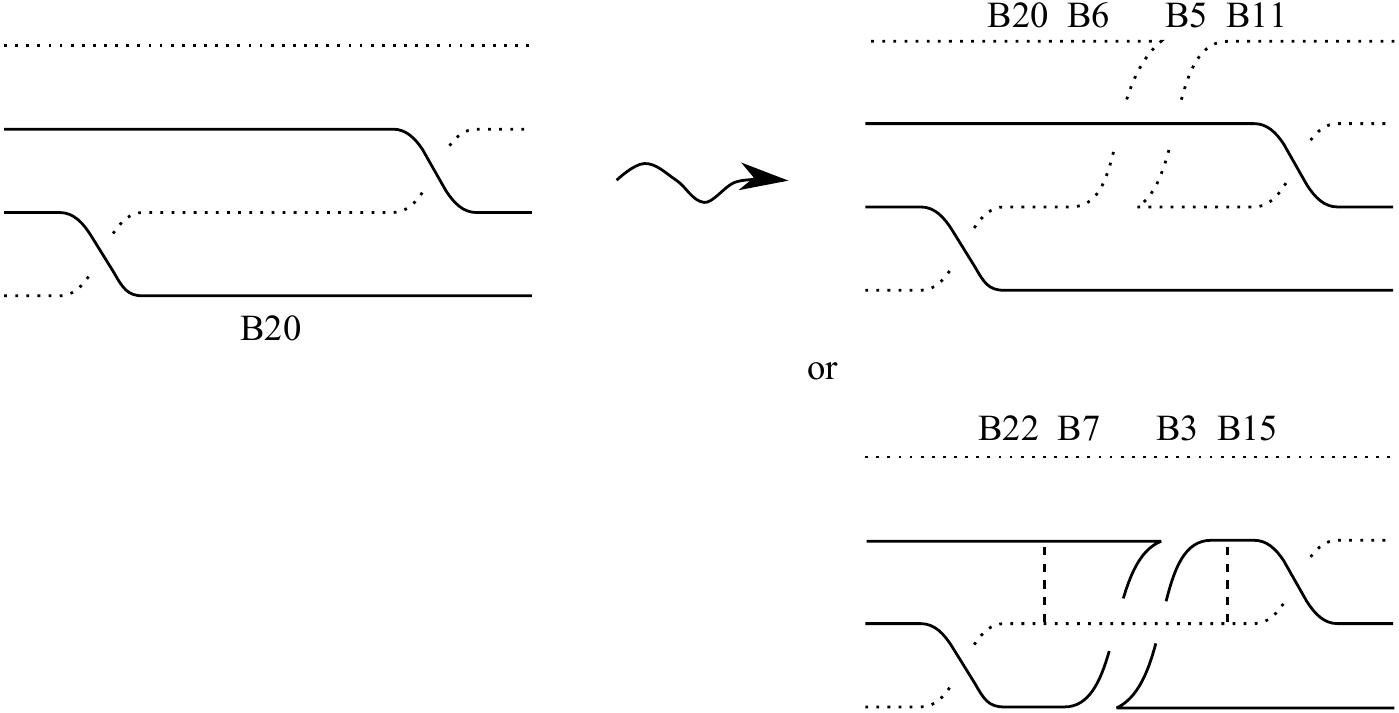}
	\end{center}
	\caption{Case 12.}
	\label{c12}
\end{figure}

\begin{figure}
	\begin{center}
		\includegraphics[width=6in]{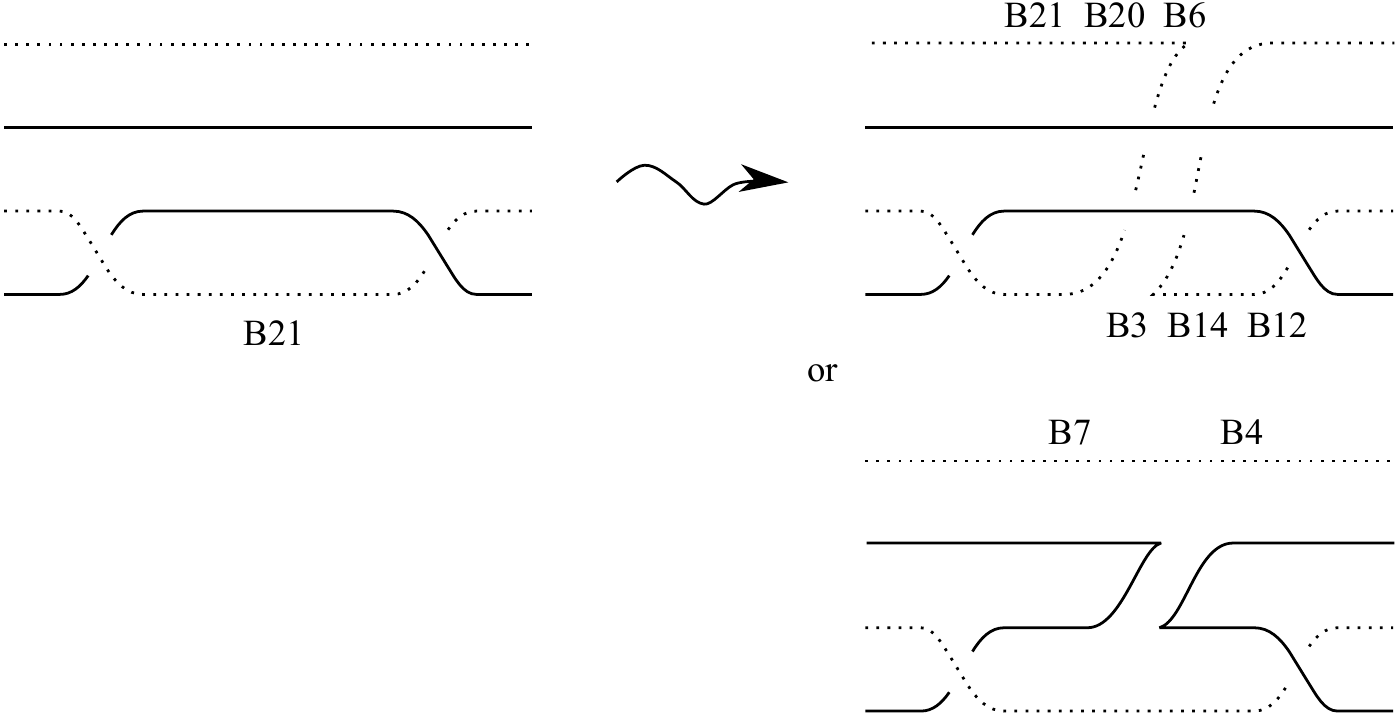}
	\end{center}
	\caption{Case 13.}
	\label{c13}
\end{figure}

\begin{figure}
	\begin{center}
		\includegraphics[width=6in]{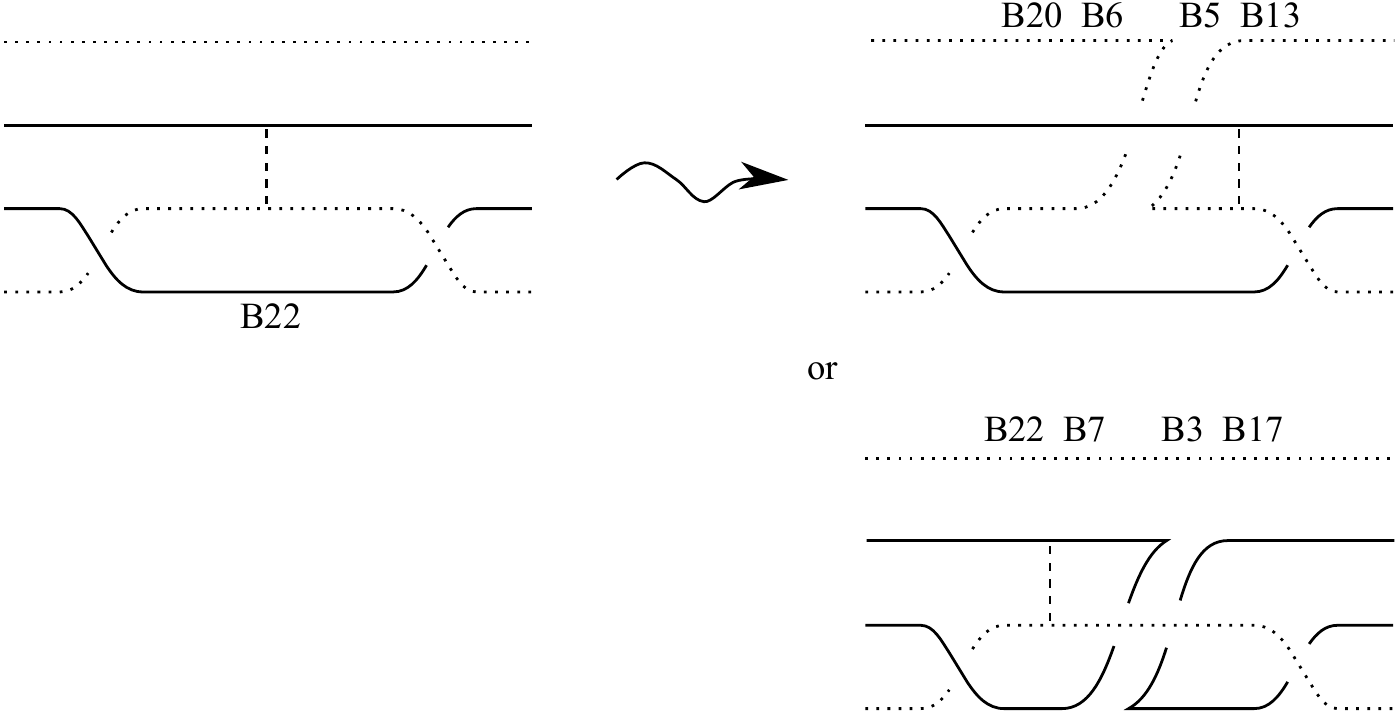}
	\end{center}
	\caption{Case 14.}
	\label{c14}
\end{figure}
\newpage
\noindent
\textbf{3.4 Parity of Normal Rulings under Legendrian isotopy.}
In this section, we show that the parity of normal rulings is invariant under Legendrian isotopy.

\begin{Prop}
\label{p1}
Suppose we have a normal ruling of a Legendrian link $K$. Applying regular isotopy to the front projection of $K$ will not change the parity of the corresponding normal ruling (under the one-to-one correspondence of Theorem \ref{Chekanov}).
\end{Prop}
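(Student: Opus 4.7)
The plan is to reduce the proposition to an application of Lemma \ref{lres}. By induction, it suffices to consider the case in which the regular isotopy of $K$ consists of a single elementary move. A regular isotopy of $K$ that does not alter the cyclic $x$-order of crossings (and does not create or destroy crossings or cusps) induces a genuine regular isotopy of the resolution: every remaining crossing and every resolved horizontal smooth strand is simply pushed along with the strands of $K$. In this situation Lemma \ref{lres} immediately yields that the number of clasps, and hence the parity, is preserved.

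By the proof of Theorem \ref{Chekanov}, the only regular isotopy that can alter the switch set under the correspondence is one which interchanges the $x$-coordinates of two crossings $a$ and $b$. I would split this into the two sub-cases from that proof. In case (2), where more than two eyes are involved, the correspondence acts as the identity on individual crossings ($\{a\} \leftrightarrow \{a'\}$, $\{b\} \leftrightarrow \{b'\}$), so each crossing is resolved in the new ruling iff it was resolved in the old; the new resolution thus differs from the old only by interchanging the $x$-coordinates of two features (two crossings, two smooth regions, or a crossing and a smooth region), which is a regular isotopy of the resolution. Lemma \ref{lres} then applies.

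In case (1), where exactly two eyes are involved, the nontrivial sub-cases $\{a\} \leftrightarrow \{b'\}$ and $\{b\} \leftrightarrow \{a'\}$ need care. My argument is that the correspondence is chosen precisely so that the spatial pattern of smooth regions versus crossings is preserved: if, originally, the left-hand crossing was resolved and the right-hand crossing was not, then after the swap the new left-hand crossing (which is the relabelled original right crossing, now a switch by the correspondence) is again resolved, and the new right-hand crossing is not. Consequently the new resolution is once more obtained from the old by a regular isotopy of the resolution, and Lemma \ref{lres} finishes this case as well.

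The main obstacle I expect is carrying out case (1) rigorously. Because only two eyes are involved, the local configuration is highly constrained, but one still has to enumerate how the strands of these two eyes can sit between the crossings (two parallel strands, two nested strands, an intervening cusp, and so on) and verify in each that the correspondence truly turns into a regular isotopy of the resolution. This enumeration is in the same spirit as the case analyses in the proof of Lemma \ref{lc}, so the techniques and pictures developed there should suffice; no new machinery beyond Lemma \ref{lres} is anticipated.
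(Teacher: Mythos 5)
There is a genuine gap in your treatment of case (1), and it is precisely the case that carries all the content of the proposition. When exactly two eyes are involved and exactly one of the two interchanged crossings is a switch (the sub-cases $\{a\}\leftrightarrow\{b'\}$, $\{b\}\leftrightarrow\{a'\}$), the switch genuinely migrates from one crossing to a \emph{different} crossing of the diagram, lying between a different pair of local strands of the two eyes. The new resolution is therefore \emph{not} obtained from the old one by a regular isotopy (or R2/R3) of resolutions, so Lemma \ref{lres} cannot be invoked. Indeed, if it could be, the number of clasps would be exactly preserved; but the paper's analysis of this very case shows the number of clasps can change (it is ``either preserved or increased by 2''), and only the parity survives. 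Your heuristic that ``the spatial pattern of smooth regions versus crossings is preserved'' conflates the positions of the resolved crossings with the identity of the strands being resolved, which is what breaks down here.

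The paper closes this gap with different machinery: it applies enhanced cuts (Lemma \ref{lc}) to both resolutions, shows that the two cut-open diagrams become identical up to regular isotopy, R2 and R3 (so Lemma \ref{lres} applies to \emph{those}), and then transports the conclusion back through Lemma \ref{lc}, which preserves parity but not the clasp count. Your case (2) and the trivial sub-cases of case (1) are handled correctly and agree with the paper's footnote, but to repair case (1) you would need to bring in Lemma \ref{lc} (or an equivalent parity-only argument); a direct reduction to Lemma \ref{lres} alone cannot work.
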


\begin{proof}
The only case that a regular isotopy might change the number of clasps is when the regular isotopy alter the normal ruling. This is exactly when 2 crossings are interchanged as in Figure \ref{rege}. Note that we assume that there is no other crossings/cusps in the same vertical strips in the Figure \ref{rege}. Moreover, exactly one of the two crossings must be a switch\footnote{For other cases, the number of clasps does not change by Lemma \ref{lres}.}, and there are exactly 2 eyes involved in the regular isotopy. Hence, in this situation, we may obtain their resolutions as in Figure \ref{resedd} (we use color to distinguish 2 eyes). We want to show that their numbers of clasps have the same parity. First, we prove this for case 1. After applying enhanced cuts to both front diagrams of case 1, we have the resulting objects as in the right side of Figure \ref{res1dd}. Furthermore, under regular isotopy, R2 and R3 of resolutions, they are the same, see Figure \ref{rem1dd}. Thus, the resulting diagrams after enhanced cuts must have the same number of clasps by Lemma \ref{lres}. Also, by Lemma \ref{lc}, the two diagrams before applying enhanced cuts must have the same parity for their numbers of clasps.

In Figure \ref{res1dd} - \ref{rem1dd}, local pictures show that our process produces a clasp between red eyes and blue eyes, which can be seen explicitly in Figure \ref{resm1dd}. Notice that the left blocks of local pictures will never have a clasp since they have nested components. On the other hand, depending on which blocks they are at the right side of local pictures, the number of clasps between red eyes and blue eyes is either preserved or increased by 2, see Figure \ref{resm1b1} - \ref{resm1b3}.

For case 2, analogous ideas can be applied as illustrated in Figure \ref{res2dd} - \ref{resm2b3}. In this case, we consider the left blocks of local pictures in stead of the right blocks.
\end{proof}

\begin{figure}
	\begin{center}
		\includegraphics[width=6.3in]{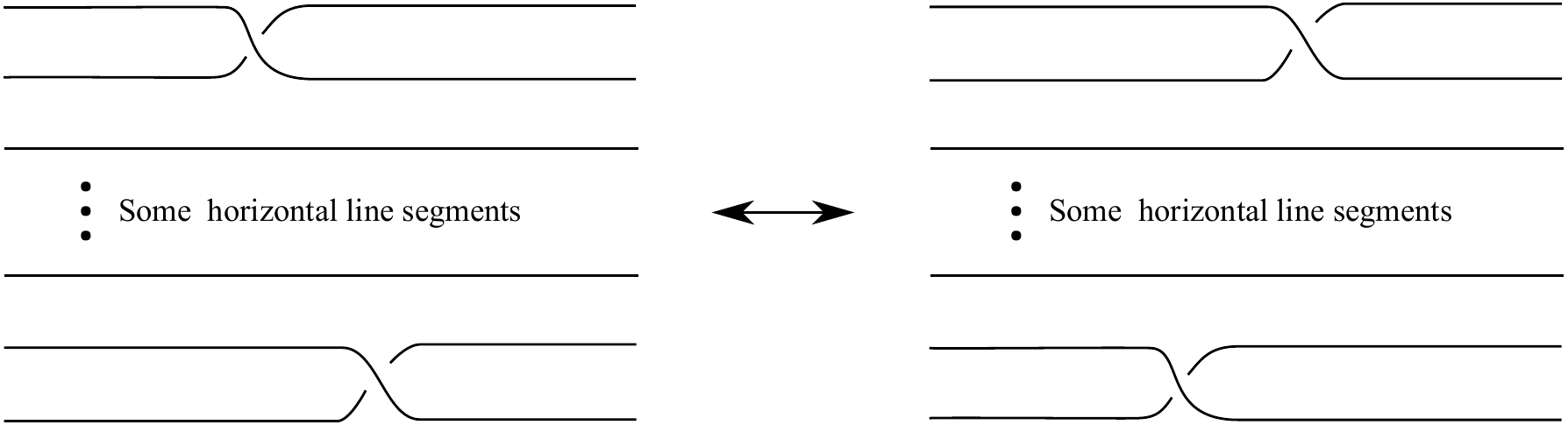}
	\end{center}
	\caption{Regular isotopy that may change the number of clasps.}
	\label{rege}
\end{figure}

\begin{figure}
	\begin{center}
		\vspace{0.9cm}
		\includegraphics[width=6.3in]{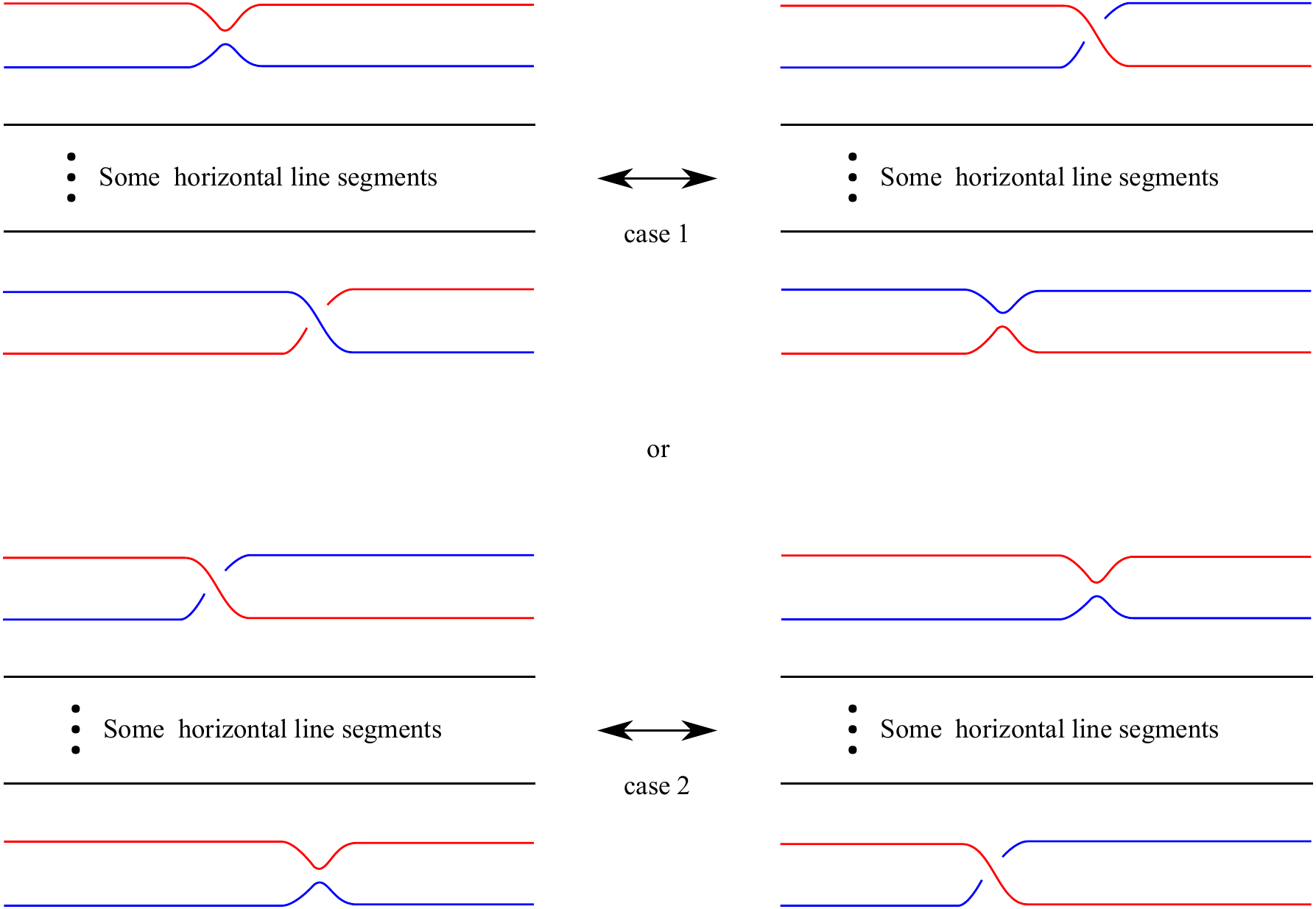}
	\end{center}
	\caption{Possible resolutions of regular isotopy that may change the number of clasps.}
	\label{resedd}
\end{figure}

\begin{figure}
	\begin{center}
		\includegraphics[width=6.3in]{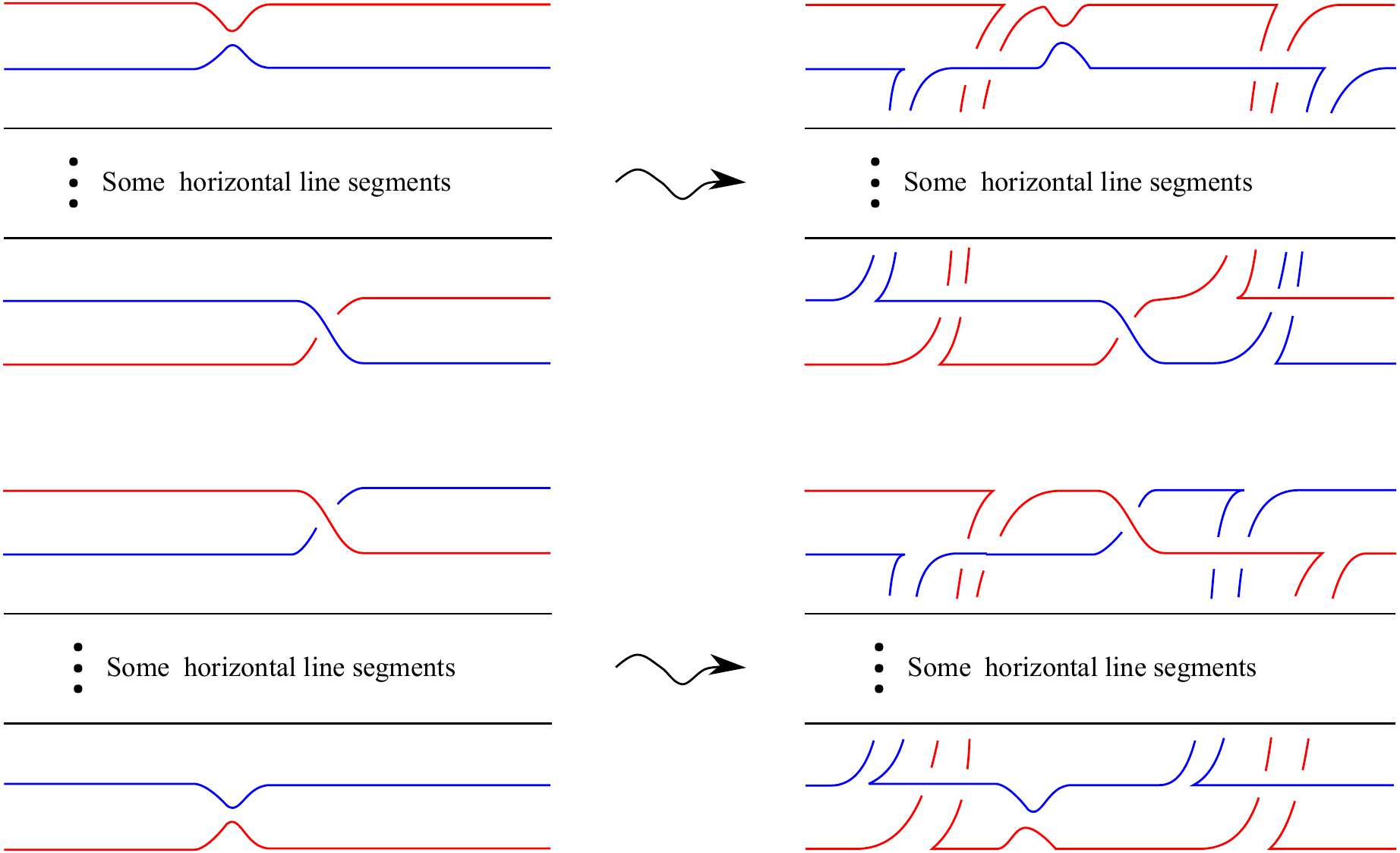}
	\end{center}
	\caption{Applying enhanced cuts to case 1.}
	\label{res1dd}
\end{figure}

\begin{figure}
	\begin{center}
		\includegraphics[width=6.3in]{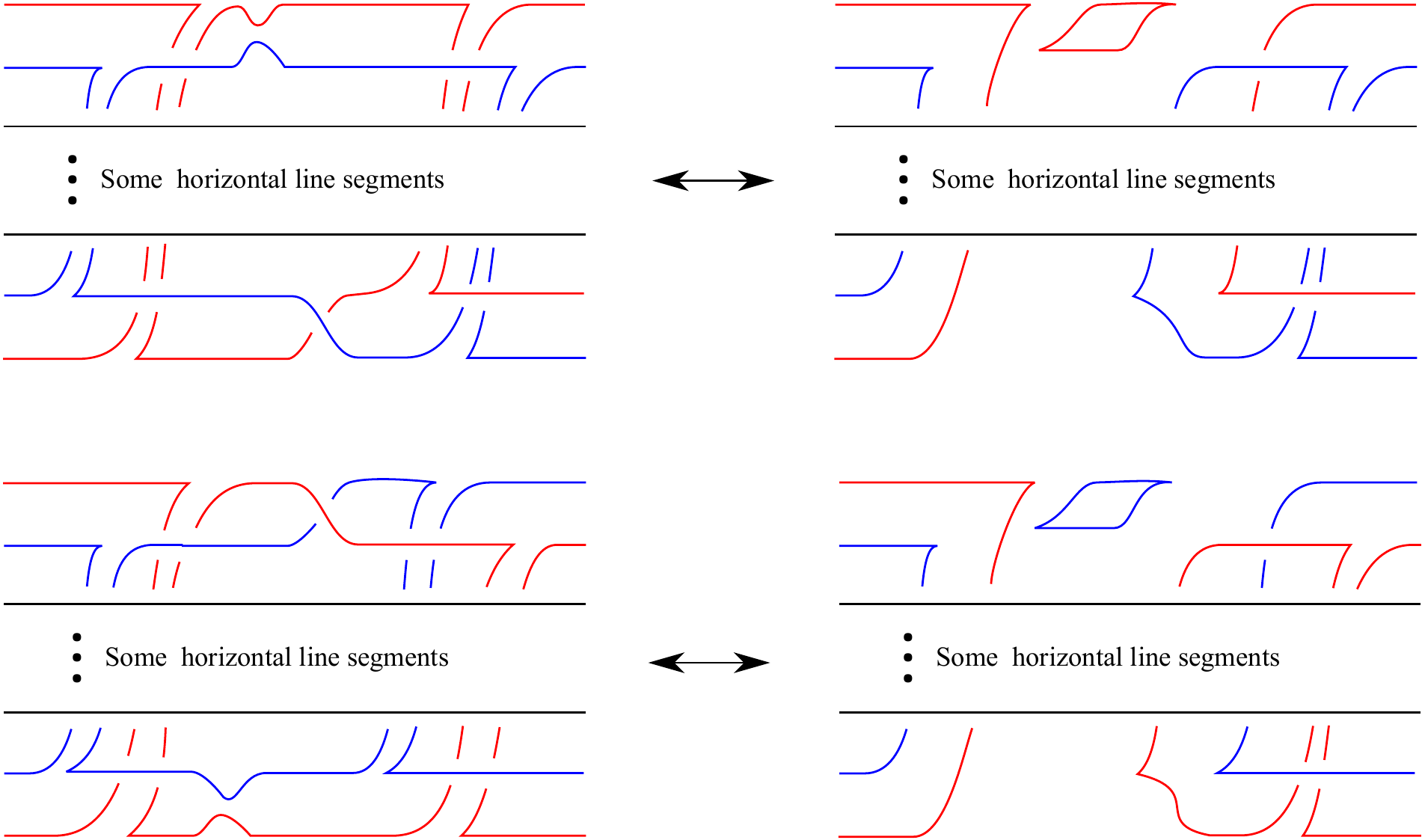}
	\end{center}
	\caption{Applying enhanced cuts to case 1 gives Legendrian isotopic front diagrams.}
	\label{rem1dd}
\end{figure}

\begin{figure}
	\begin{center}
		\includegraphics[width=6.3in]{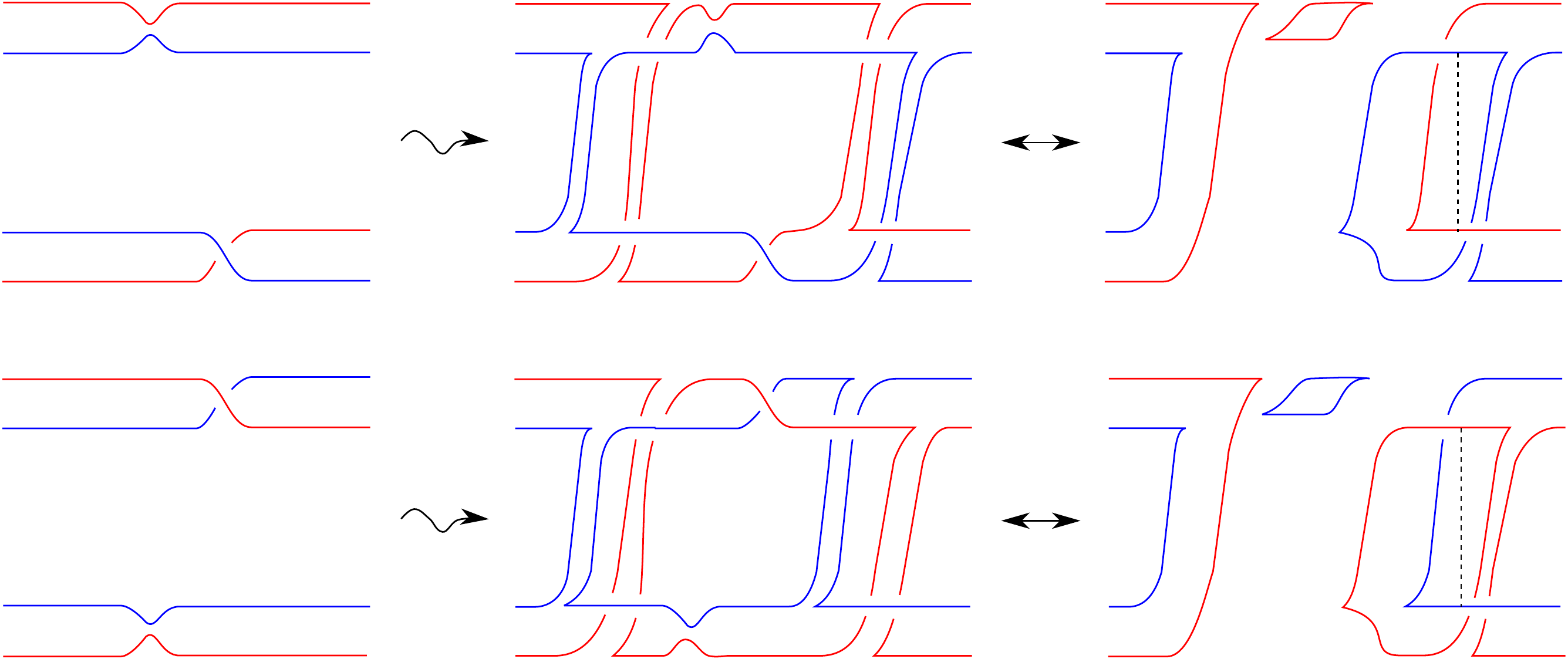}
	\end{center}
	\caption{Local pictures for red eyes and blue eyes in case 1.}
	\label{resm1dd}
	\vspace{1.5cm}
	\begin{center}
		\includegraphics[width=6.3in]{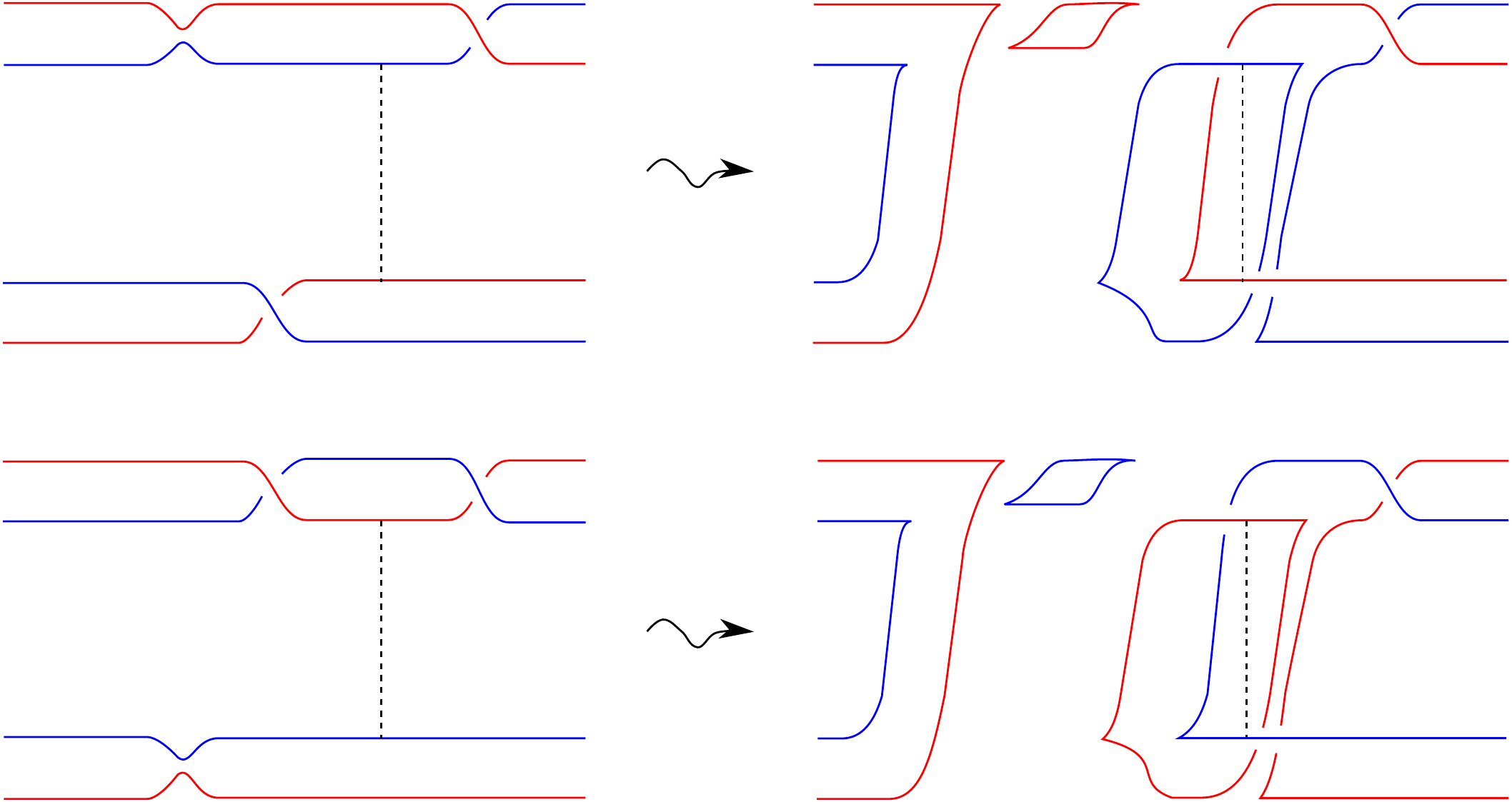}
	\end{center}
	\caption{Subcase 1 of local pictures for red eyes and blue eyes in case 1.}
	\label{resm1b1}
\end{figure}

\begin{figure}
	\begin{center}
		\includegraphics[width=6.3in]{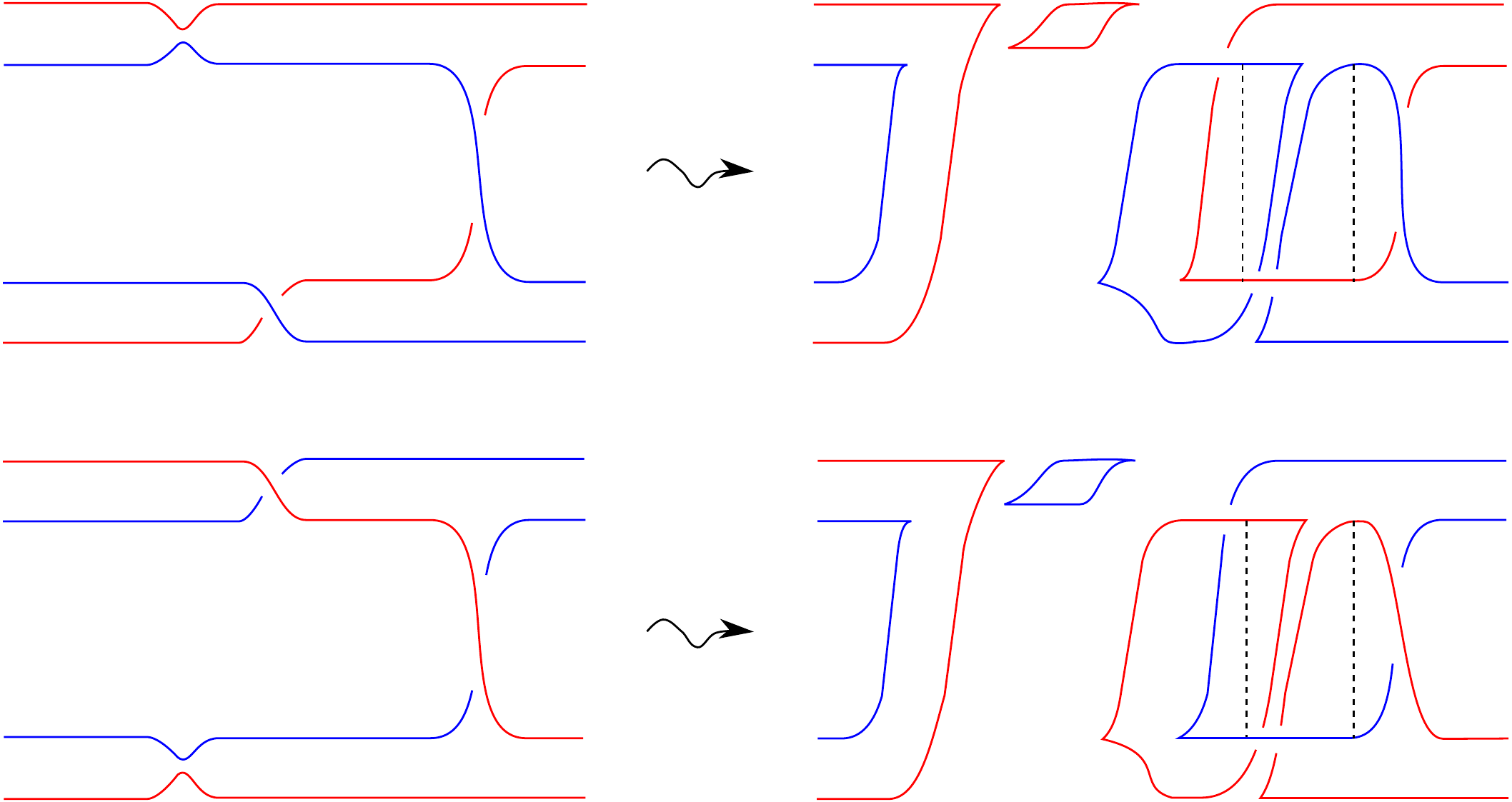}
	\end{center}
	\caption{Subcase 2 of local pictures for red eyes and blue eyes in case 1.}
	\label{resm1b2}
\end{figure}

\begin{figure}
	\begin{center}
		\vspace{.8cm}
		\includegraphics[width=6.3in]{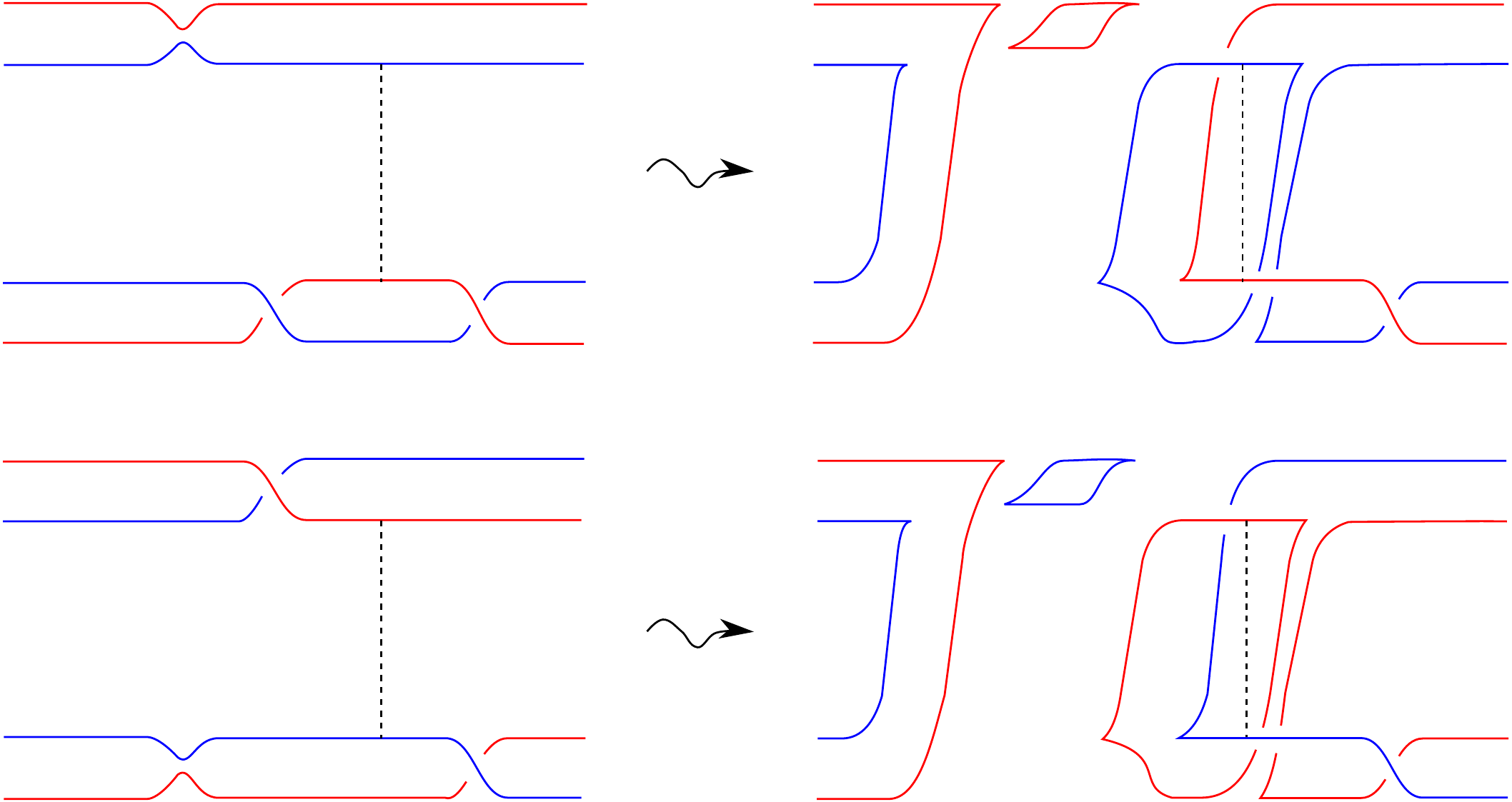}
	\end{center}
	\caption{Subcase 3 of local pictures for red eyes and blue eyes in case 1.}
	\label{resm1b3}
\end{figure}

\begin{figure}
	\begin{center}
		\includegraphics[width=6.3in]{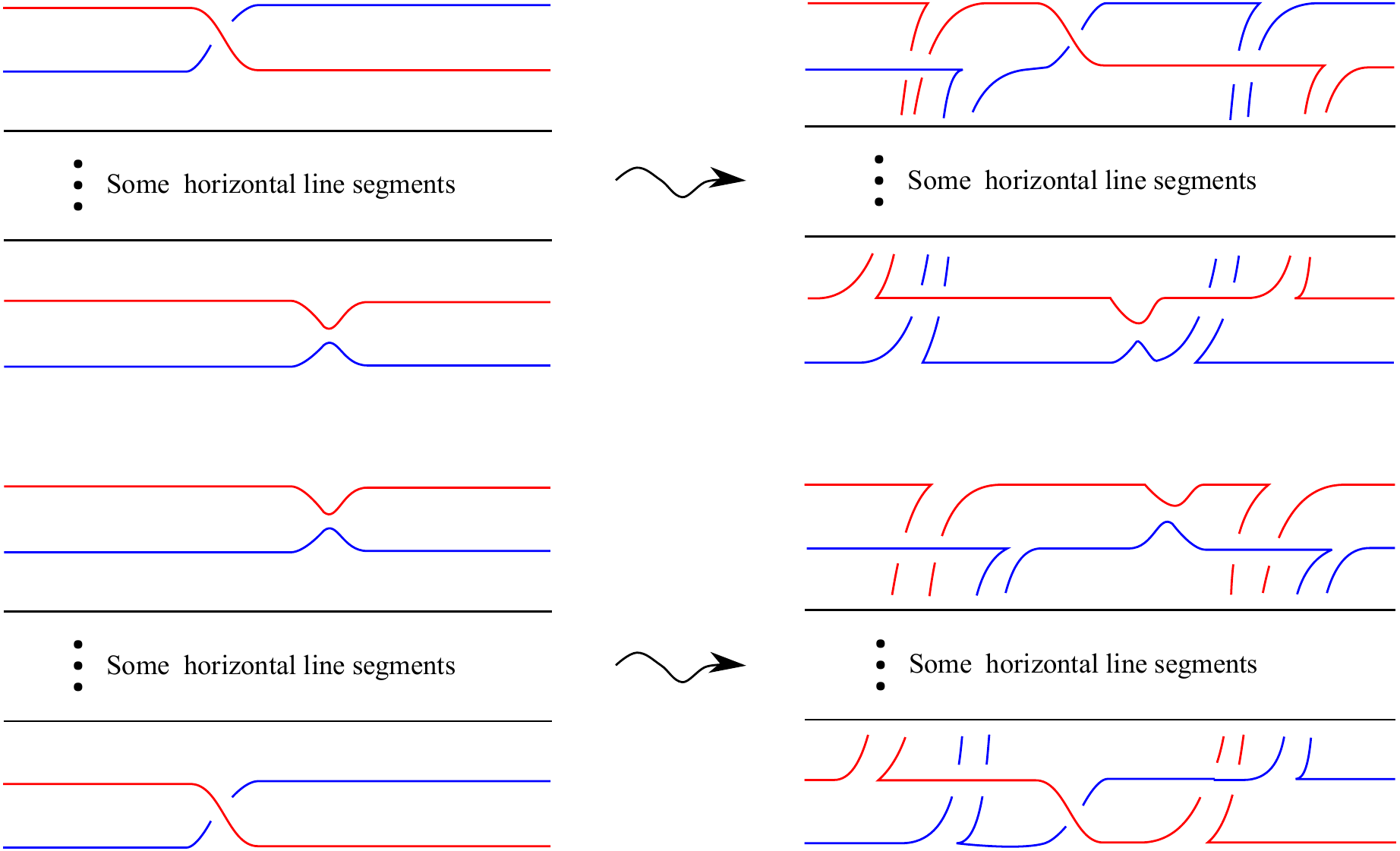}
	\end{center}
	\caption{Applying enhanced cuts to case 2.}
	\label{res2dd}
\end{figure}

\begin{figure}
	\begin{center}
		\includegraphics[width=6.3in]{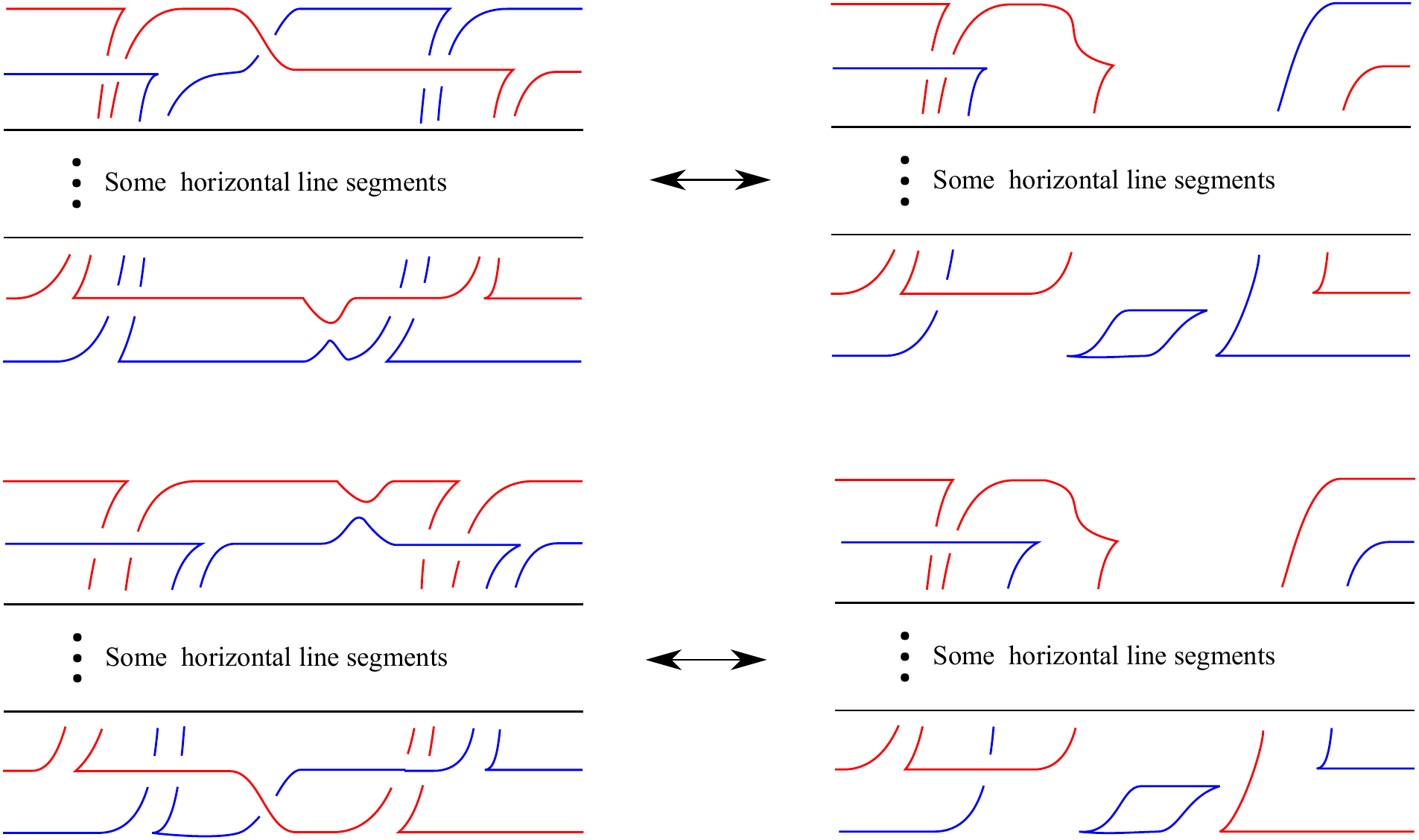}
	\end{center}
	\caption{Applying enhanced cuts to case 2 gives Legendrian isotopic front diagrams.}
	\label{rem2dd}
\end{figure}

\begin{figure}
	\begin{center}
		\vspace{.6cm}
		\includegraphics[width=6.3in]{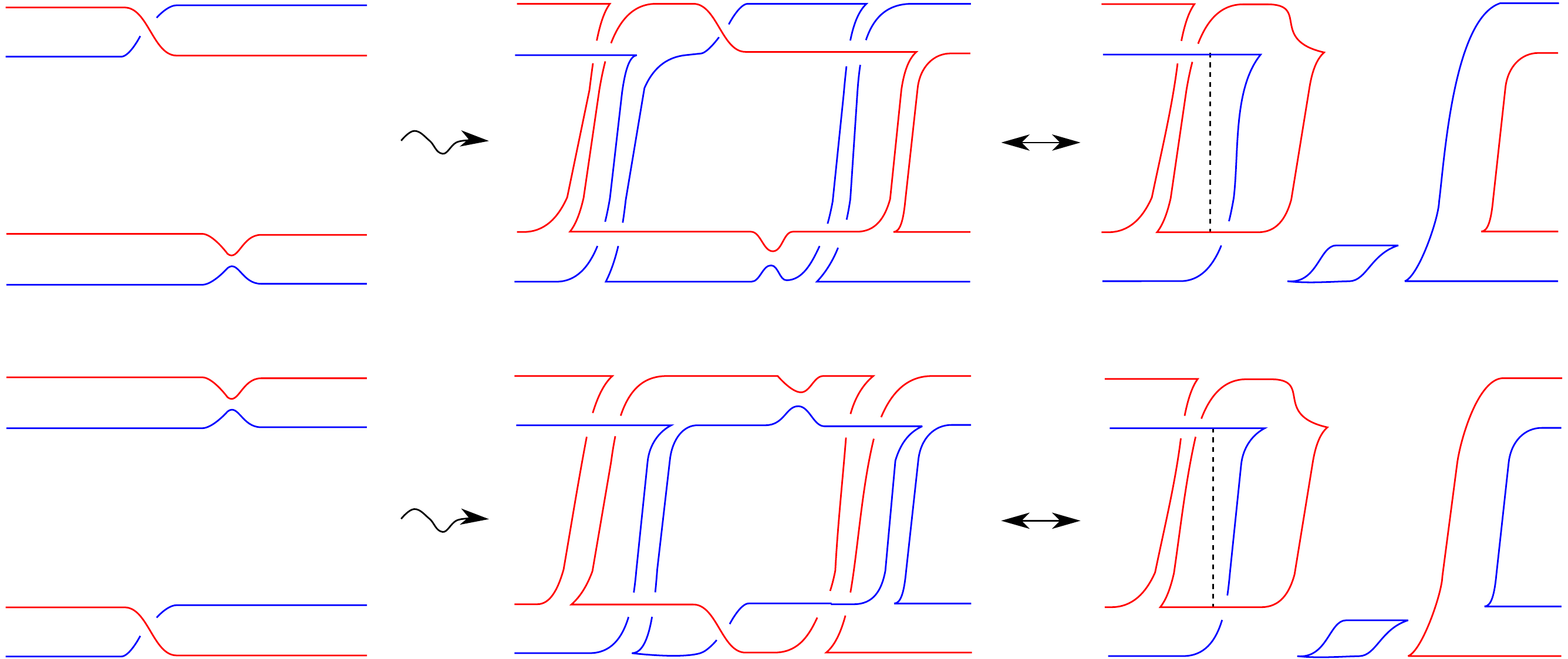}
	\end{center}
	\caption{Local pictures for red eyes and blue eyes in case 2.}
	\label{resm2dd}
	\begin{center}
		\vspace{1.5cm}
		\includegraphics[width=6.3in]{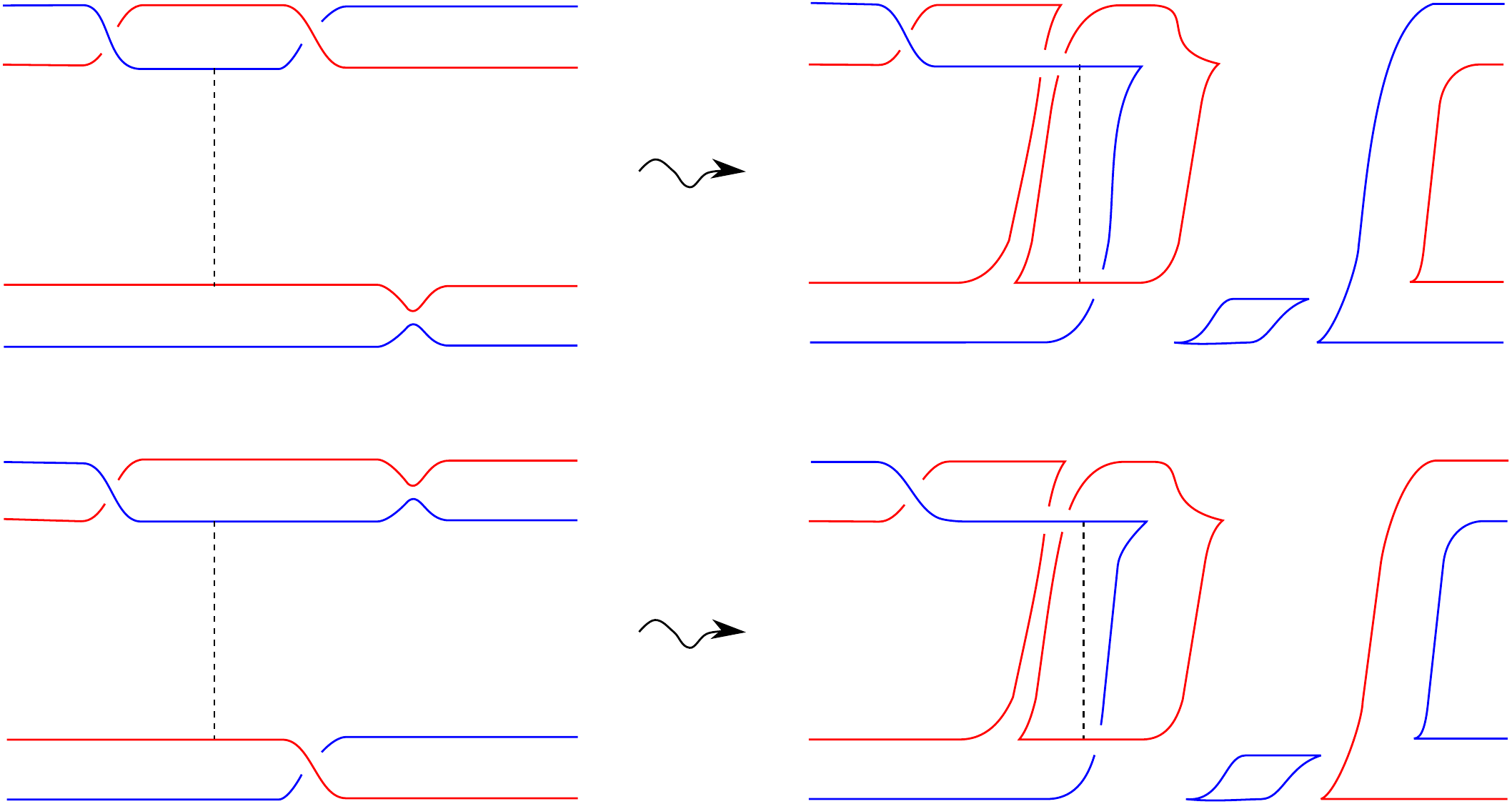}
	\end{center}
	\caption{Subcase 1 of local pictures for red eyes and blue eyes in case 2.}
	\label{resm2b1}
\end{figure}

\begin{figure}
	\begin{center}
		\includegraphics[width=6.3in]{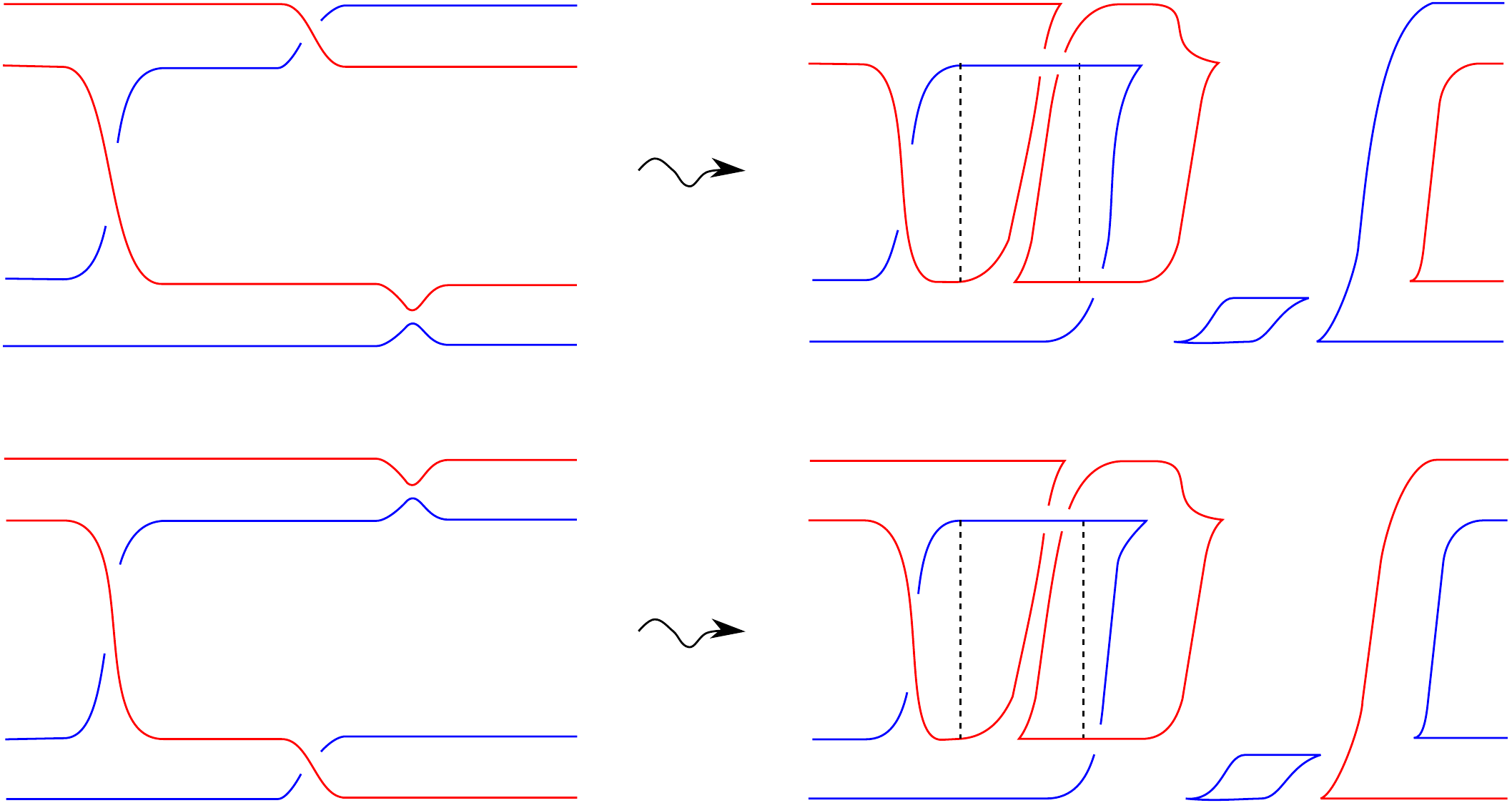}
	\end{center}
	\caption{Subcase 2 of local pictures for red eyes and blue eyes in case 2.}
	\label{resm2b2}
\end{figure}

\begin{figure}
	\begin{center}
		\vspace{.8cm}
		\includegraphics[width=6.3in]{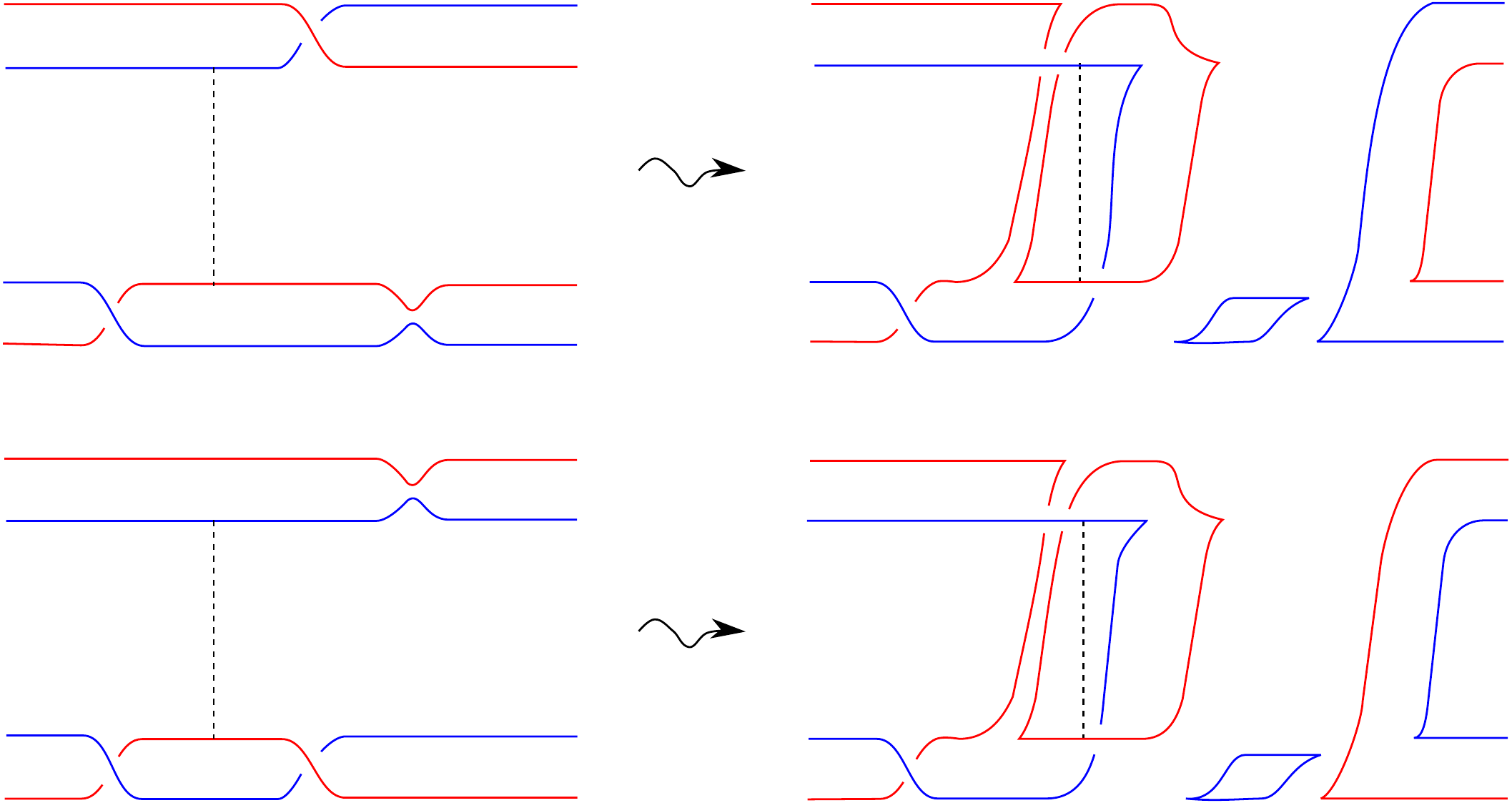}
	\end{center}
	\caption{Subcase 3 of local pictures for red eyes and blue eyes in case 2.}
	\label{resm2b3}
\end{figure}
\newpage
Next, we prove akin statement for Legendrian Reidemeister moves.

\begin{Lem}
\label{l6}
Suppose we have a normal ruling of a Legendrian link $K$. Applying a Legendrian Reidemeister move to $\pi(K)$ will not change the parity of the corresponding normal ruling (under the one-to-one correspondence of Theorem \ref{Chekanov}).
\end{Lem}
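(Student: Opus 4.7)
My plan is to mirror the structure of the proof of Proposition \ref{p1} and treat R1, R2, R3 in turn, in each case partitioning according to the switch correspondences recorded in the proof of Theorem \ref{Chekanov}. The unifying principle is that whenever the switch set transforms trivially across the move, the resolution itself changes only by a regular isotopy, R2, or R3 of the resolution, and Lemma \ref{lres} gives invariance of the clasp count directly; whenever the switch set is altered non-trivially, I will apply enhanced cuts on both sides of the move to reduce to a pair of fronts that agree after such ``easy'' moves of the resolution, at which point Lemmas \ref{lres} and \ref{lc} together force equality of parities.

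For R2, the correspondence is $\emptyset \leftrightarrow \emptyset$, so no new switches appear and the resolutions of the two sides differ exactly by an R2 move performed on the resolution itself; Lemma \ref{lres} closes this case immediately. For R1, the correspondence $\emptyset \leftrightarrow \{a\}$ introduces a new eye of teardrop shape bounded by two cusps and the switch $a$, sitting inside one of the pre-existing eyes. This new eye forms only nested-type blocks (of type B1) with every other eye, so it contributes zero new clasps, and in fact the clasp count itself, not merely its parity, is preserved.

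The heart of the argument is R3. For the correspondences $\emptyset \leftrightarrow \emptyset$, $\{a,b,c\} \leftrightarrow \{a',b',c'\}$, $\{a\} \leftrightarrow \{a'\}$, $\{b\} \leftrightarrow \{b'\}$, $\{c\} \leftrightarrow \{c'\}$, the resolution transforms by an R3 move of the resolution, possibly combined with regular isotopy of the non-switch crossings participating in the R3 triangle, so Lemma \ref{lres} again suffices. The substantive work lies in the four 2-switch cases A, B, C, D. For each of these I would follow the template of Proposition \ref{p1}: apply appropriate enhanced cuts to both sides of the R3 move so that the resulting fronts are related by regular isotopy, R2 and R3 of the resolution; conclude from Lemma \ref{lres} that the two post-cut resolutions have equal clasp counts; and then invoke Lemma \ref{lc} twice to propagate the conclusion to parity of the pre-cut resolutions.

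The main obstacle I expect is the detailed bookkeeping in the 2-switch R3 cases. Each case A--D must be split into subcases according to which blocks bound the two eyes involved on either side of the R3 region, in the spirit of Figures \ref{resm1b1}--\ref{resm1b3} and \ref{resm2b1}--\ref{resm2b3}, and one must verify in every subcase that the number of clasps contributed between the relevant pair of eyes differs by an even number across the move. This is a finite but tedious enumeration; the cleanest organization is to fix, for each of the four cases, a canonical choice of enhanced cuts modeled on those used for regular isotopy, and then to check the subcases against the catalog of blocks in Figures \ref{b1}--\ref{b3}, using the observation that nested configurations contribute no clasps and hence absorb most of the case-by-case variation.
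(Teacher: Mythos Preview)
Your treatment of R1 and R2 is fine and matches the paper's.  The problem is in R3: you have located the difficulty in the wrong place.

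You assert that the single-switch correspondences $\{a\}\leftrightarrow\{a'\}$, $\{b\}\leftrightarrow\{b'\}$, $\{c\}\leftrightarrow\{c'\}$ are disposed of by Lemma~\ref{lres} because ``the resolution transforms by an R3 move of the resolution, possibly combined with regular isotopy.''  This is not so.  With one crossing smoothed, only two crossings remain in the triangle, so there is no R3 to perform on the resolution; and the change is not a regular isotopy of the resolution either.  The paper's own example in Section~3.2 (the passage from Figure~\ref{c20a} to Figure~\ref{c20b}) is exactly a single-switch R3 in which the clasp count drops from $2$ to $0$.  Since Lemma~\ref{lres} preserves the clasp count \emph{exactly}, it cannot cover this case, and your proposal has a genuine gap here.

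Conversely, you earmark the four two-switch cases A--D as the ``substantive work'' and plan an enhanced-cut argument for them.  In the paper these cases are the easy ones: in each two-switch correspondence the resolutions on the two sides really do differ only by regular isotopy or an R3 of the resolution, so Lemma~\ref{lres} applies directly and the clasp count is unchanged.  The paper's entire case analysis (twelve cases, Figures~\ref{rc1}--\ref{rcc8c}) is devoted to the single-switch correspondence $\{a\}\leftrightarrow\{a'\}$, and it is carried out not via enhanced cuts but by enumerating the possible normality configurations of the three eyes meeting at the triangle and checking the parity of the contributed clasps directly in each configuration.  To repair your argument you would need to supply precisely this analysis for the one-switch case.
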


\begin{proof}
It is easy to see that applying R1 or R2 to $\pi(K)$ will not change the number of clasps of the corresponding normal ruling under the one-to-one correspondence in Theorem \ref{Chekanov}. For R3, under the correspondence, we have that the only case that might change the number of clasps is when there is exactly 1 switch, i.e. $\{a\} \leftrightarrow \{a'\}$ as in the first row of Figure \ref{rc}. This is because other cases of R3 have the same resolutions by regular isotopy or R3 (see Lemma \ref{lres}). We want to show that the correspondence of resolutions as in the second row of Figure \ref{rc} preserves the parity of the number of clasps. In order to count this amount, we need to specify the environment of the resolutions. First, recall that 3 strands in the resolutions must come from 3 different eyes by the definition of normal rulings. So it is enough to consider clasps for only these 3 eyes. Moreover, since there were switches at $a$ and $a'$, components of eyes must follow the normality conditions. We list all possibilities in Figure \ref{rc1} - \ref{rc3}. Because there is a presence/absence of middle block between solid eye and dotted eye, we need to know the other ends of adjacent blocks, i.e. block that has a middle block emerged or blocks that stay next to the middle block, between the same pair of eyes in order to count the number of clasps coming from this pair of eyes (the presence/absence of middle block can change the number of clasps). Similar idea applies for the pair of dash eye and solid eye.

Case 1: Notice that all adjacent blocks have nested components. So they never provide a clasp no matter what their other ends are. Thus, all blocks affected give 1 clasp in total as in Figure \ref{rcc1}.

Case 2: Since adjacent blocks between dotted eye and solid eye have nested components, they give no clasp. So, we only need to consider adjacent blocks between dash eye and solid eye, and we will not look further into adjacent blocks between dotted eye and solid eye. All possible subcases are presented in Figure \ref{rcc2a} - \ref{rcc2c}. Each case preserves parity.

Case 3: It is not hard to see that this case is obtained form interchanging the dash and the dotted eyes of case 2. So, in a sense, this case is a symmetric version of case 2.

Case 4: This case is a symmetric version of case 1.

Case 5: Since adjacent blocks between dotted eye and solid eye have nested components, they give no clasp. So, we only need to consider adjacent blocks between dash eye and solid eye, and we will not look further into adjacent blocks between dotted eye and solid eye. All possible subcases are presented in Figure \ref{rcc5a} - \ref{rcc5c}. Each case preserves parity.

Case 6: Since adjacent blocks between dash eye and solid eye have nested components, they give no clasp. So, we only need to consider adjacent blocks between dotted eye and solid eye, and we will not look further into adjacent blocks between dash eye and solid eye. All possible subcases are presented in Figure \ref{rcc6a} - \ref{rcc6c}. Each case preserves parity.

Case 7: Notice that all adjacent blocks have nested components. So they never provide a clasp no matter what their other ends are. Thus, all blocks affected give 1 clasp in total as in Figure \ref{rcc7}.

Case 8: Since adjacent blocks between dotted eye and solid eye have nested components, they give no clasp. So, we only need to consider adjacent blocks between dash eye and solid eye, and we will not look further into adjacent blocks between dotted eye and solid eye. All possible subcases are presented in Figure \ref{rcc8a} - \ref{rcc8c}. Each case preserves parity.

Case 9: This case is a symmetric version of case 8.

Case 10: This case is a symmetric version of case 7.

Case 11: This case is a symmetric version of case 6.

Case 12: This case is a symmetric version of case 5.
\end{proof}

\begin{figure}
	\begin{center}
		\vspace{1.5cm}
		\includegraphics{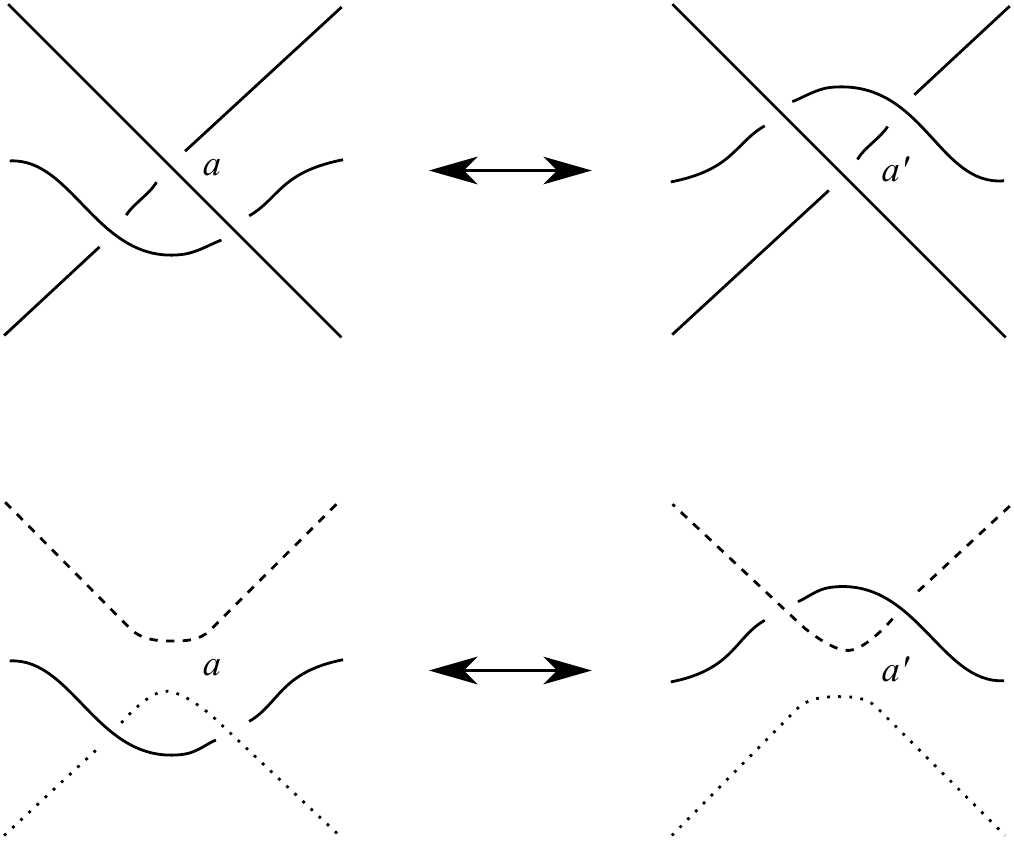}
	\end{center}
	\caption{R3 that might change the number of clasps.}
	\label{rc}
\end{figure}

\begin{figure}
	\begin{center}
		\includegraphics{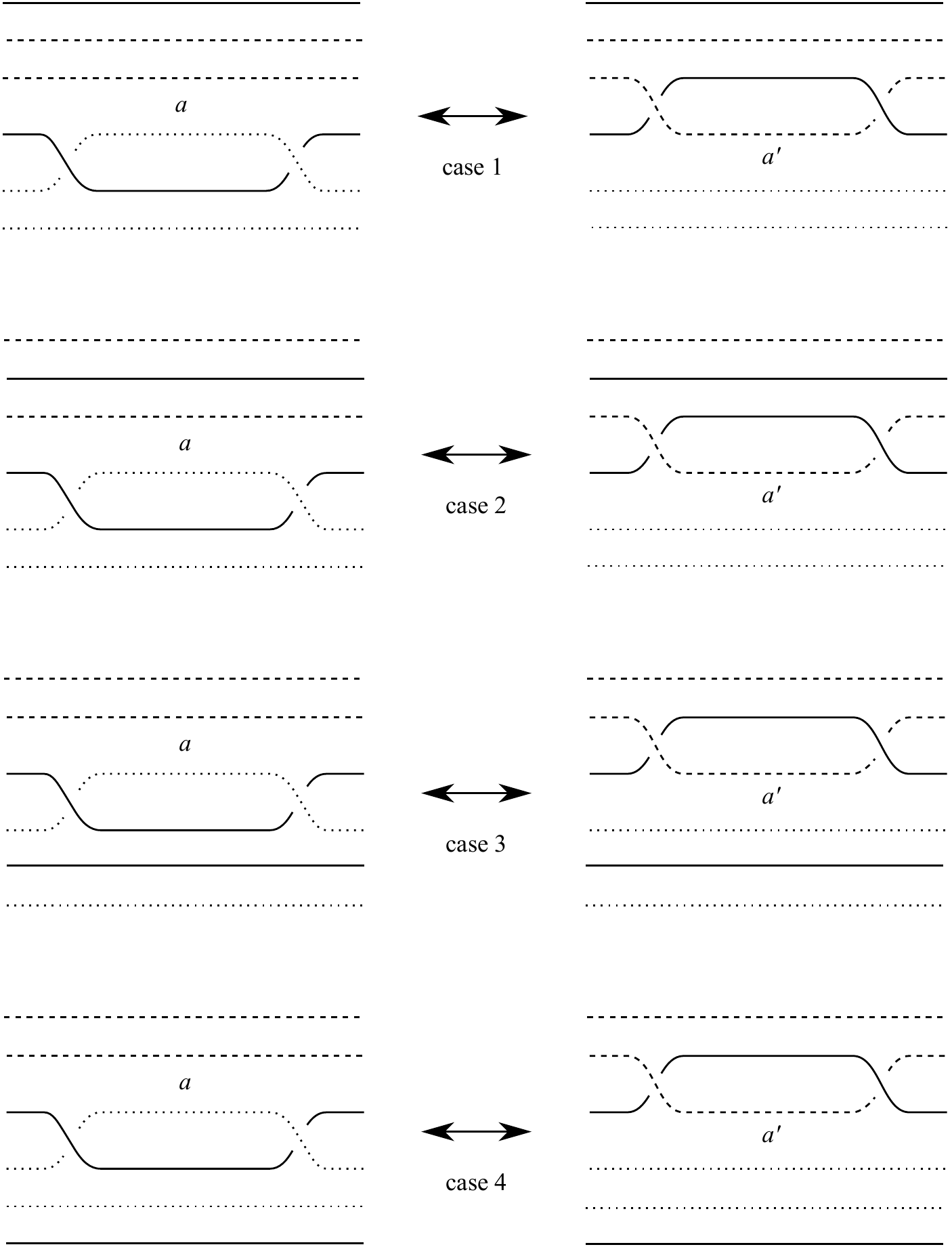}
	\end{center}
	\caption{Case 1 - 4.}
	\label{rc1}
\end{figure}

\begin{figure}
	\begin{center}
		\includegraphics{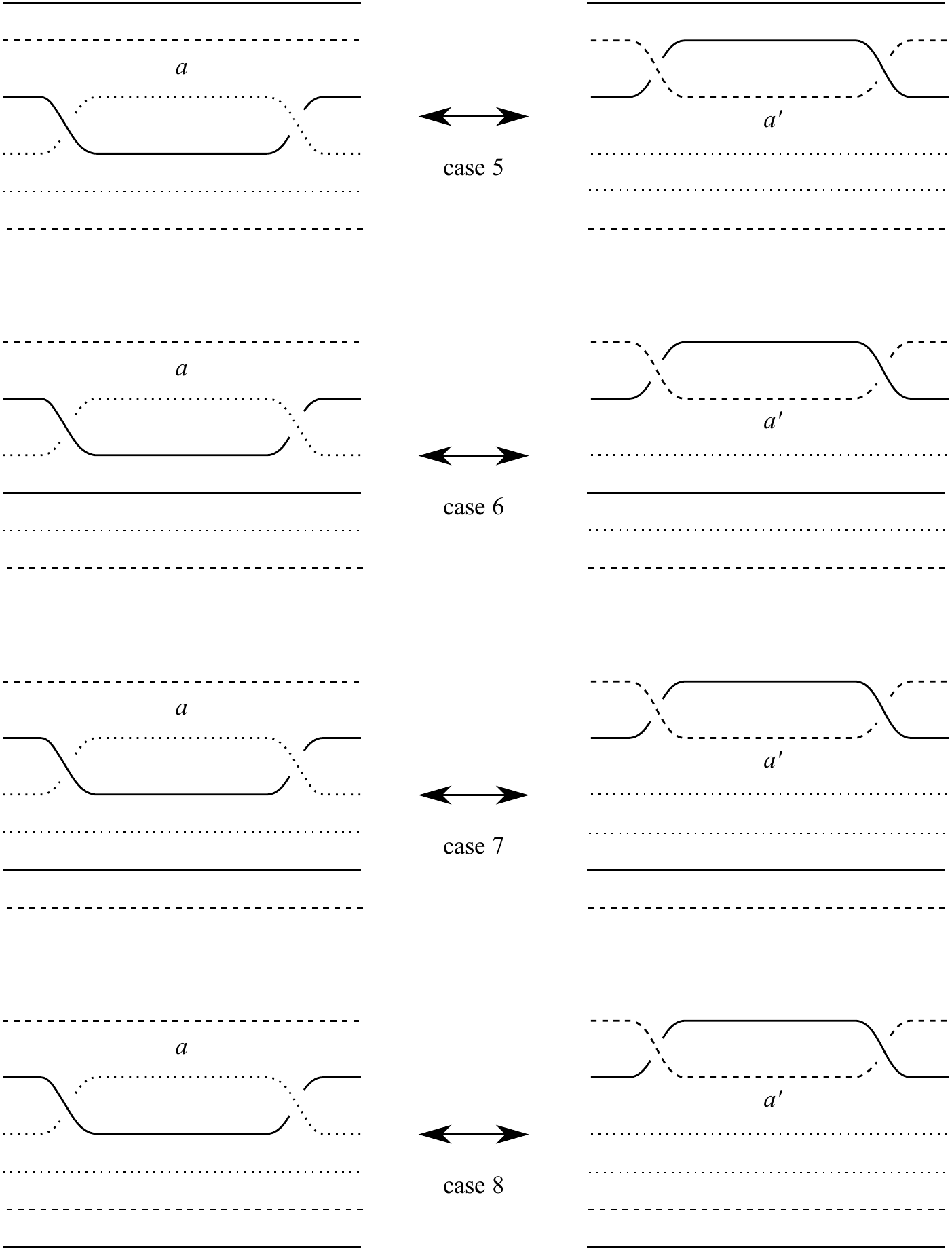}
	\end{center}
	\caption{Case 5 - 8.}
	\label{rc2}
\end{figure}

\begin{figure}
	\begin{center}
		\includegraphics{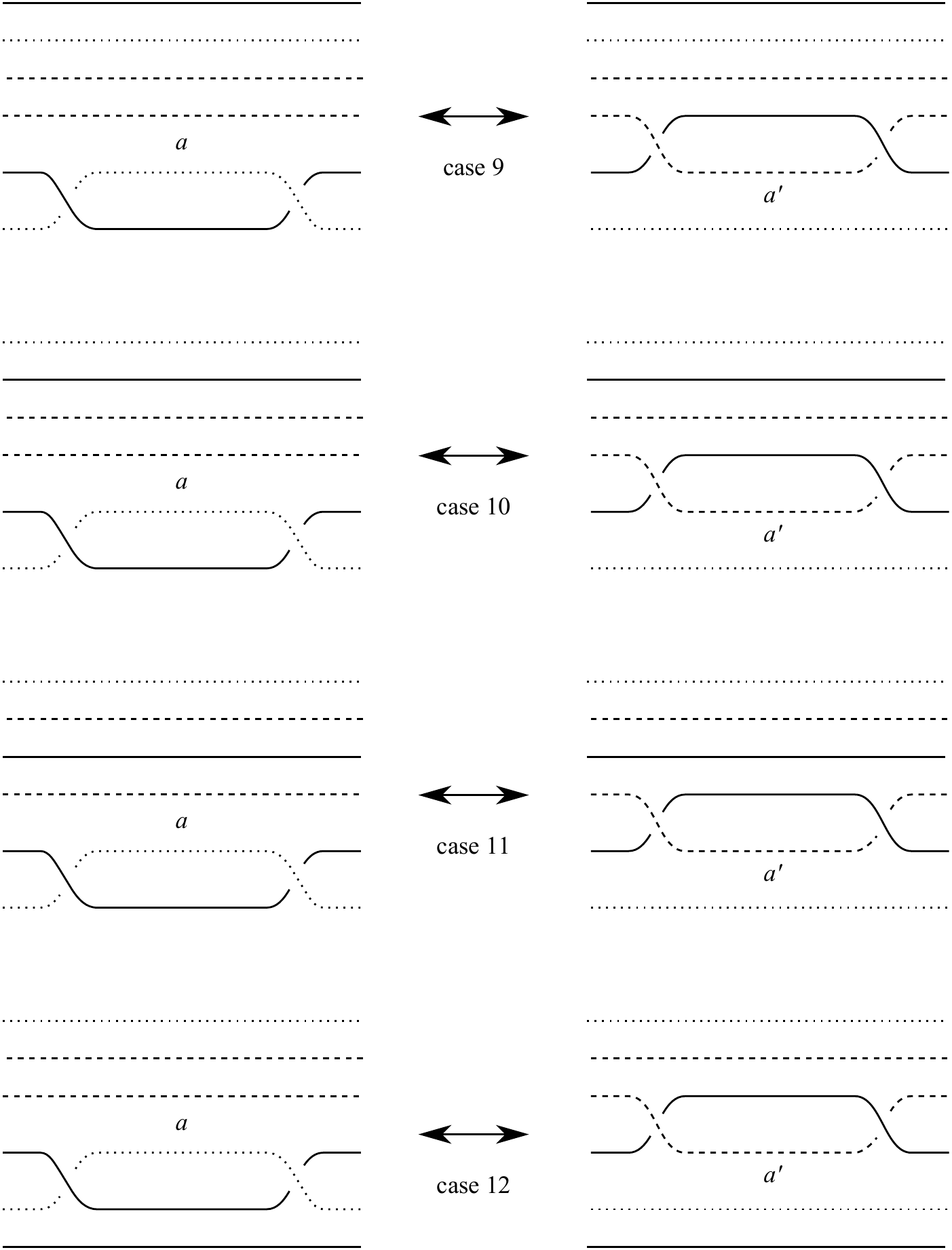}
	\end{center}
	\caption{Case 9 - 12.}
	\label{rc3}
\end{figure}

\begin{figure}
	\begin{center}
		\includegraphics{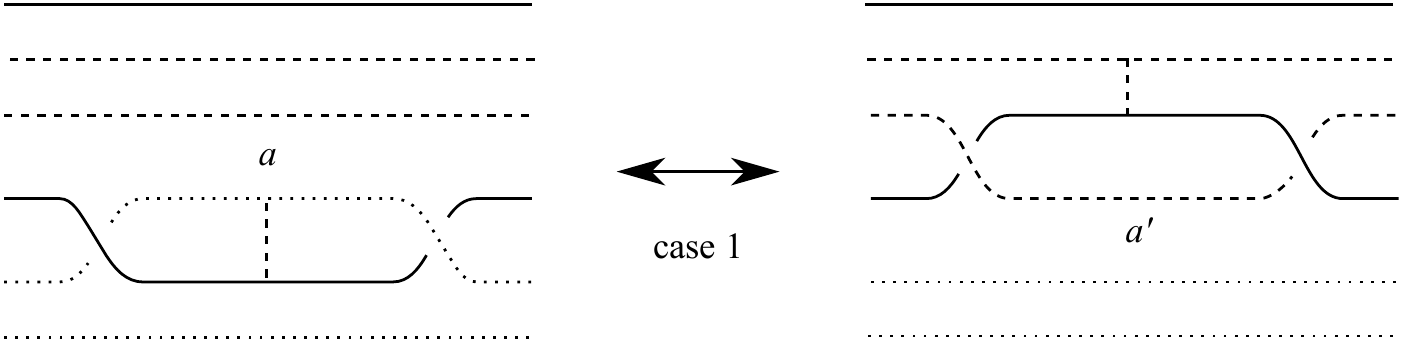}
	\end{center}
	\caption{Case 1 with its clasp exhibited.}
	\label{rcc1}
\end{figure}

\begin{figure}
	\begin{center}
		\includegraphics[width=6.3in]{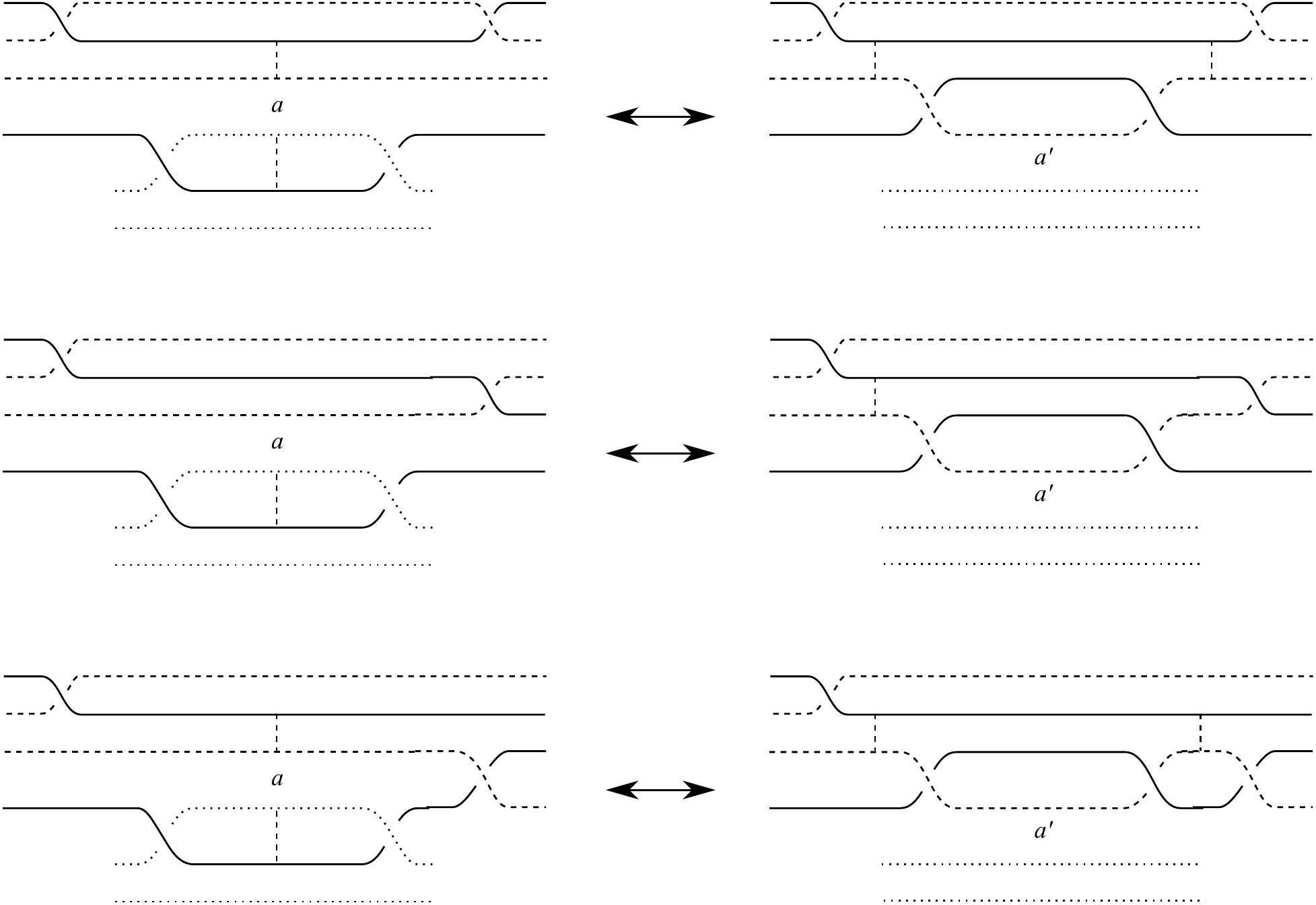}
	\end{center}
	\caption{Case 2 with clasps exhibited for each subcase.}
	\label{rcc2a}
\end{figure}

\begin{figure}
	\begin{center}
		\includegraphics[width=6.3in]{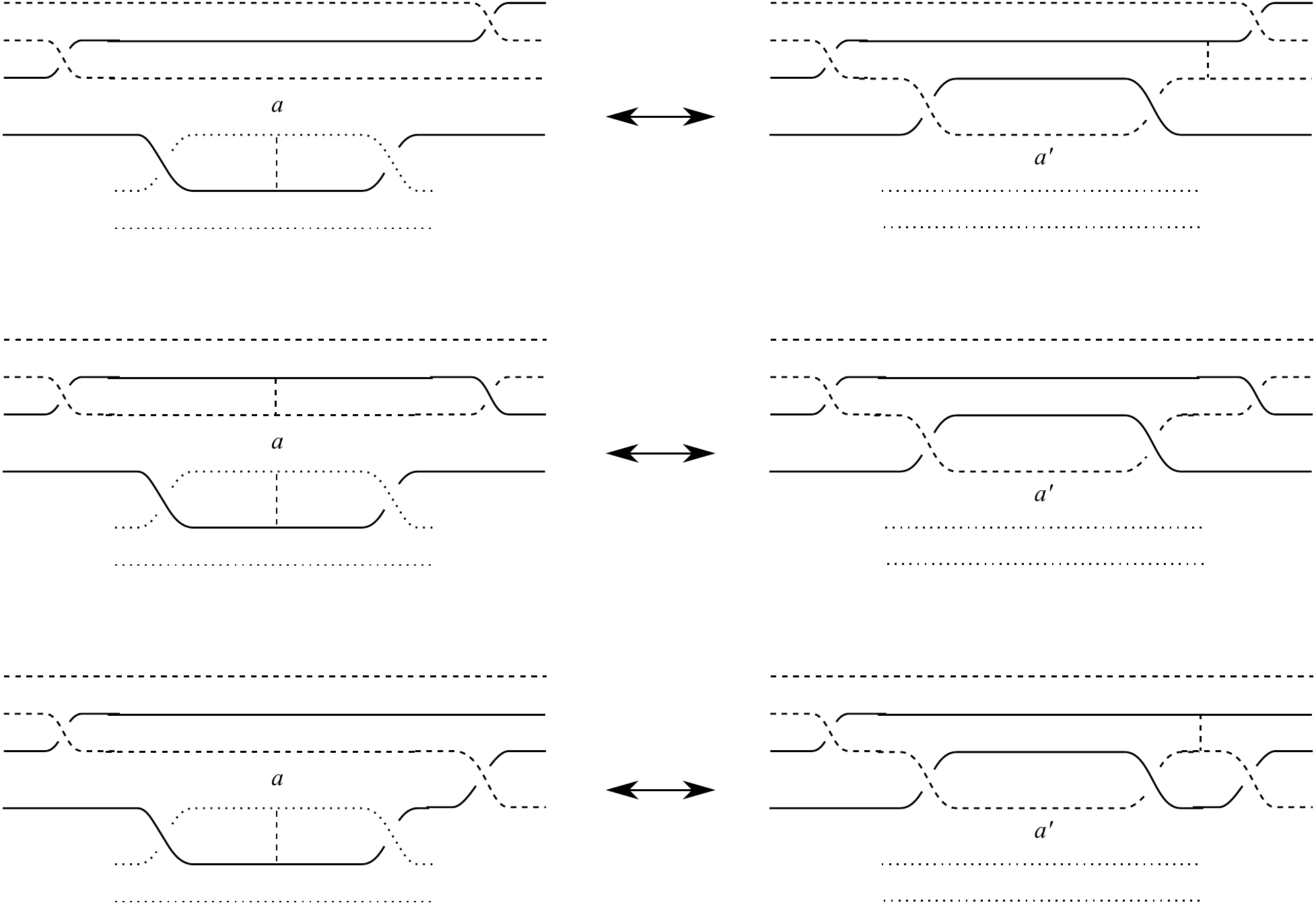}
	\end{center}
	\caption{Case 2 with clasps exhibited for each subcase (continue).}
	\label{rcc2b}
\end{figure}

\begin{figure}
	\begin{center}
		\includegraphics[width=6.3in]{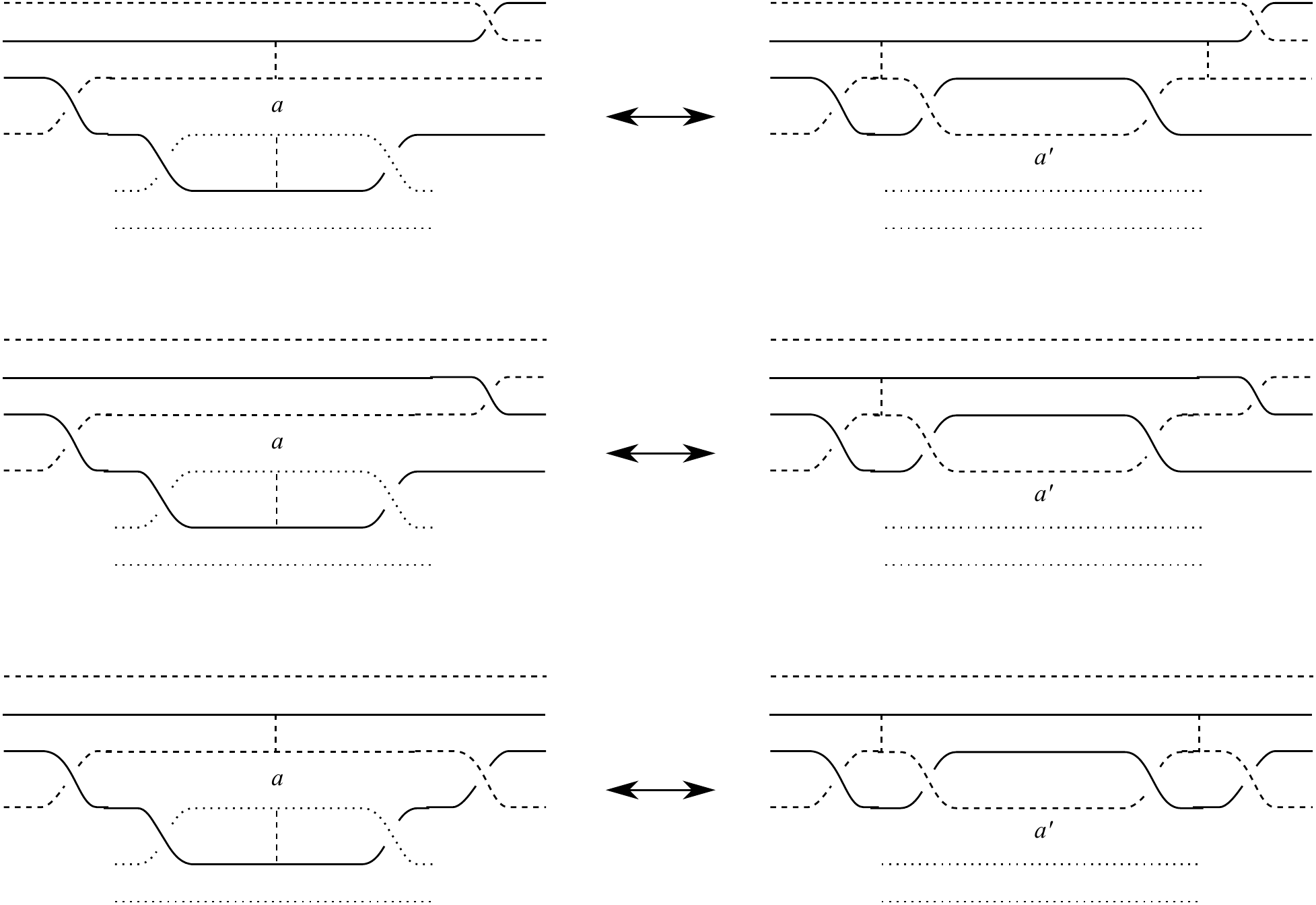}
	\end{center}
	\caption{Case 2 with clasps exhibited for each subcase (continue).}
	\label{rcc2c}
\end{figure}

\begin{figure}
	\begin{center}
		\includegraphics[width=6.3in]{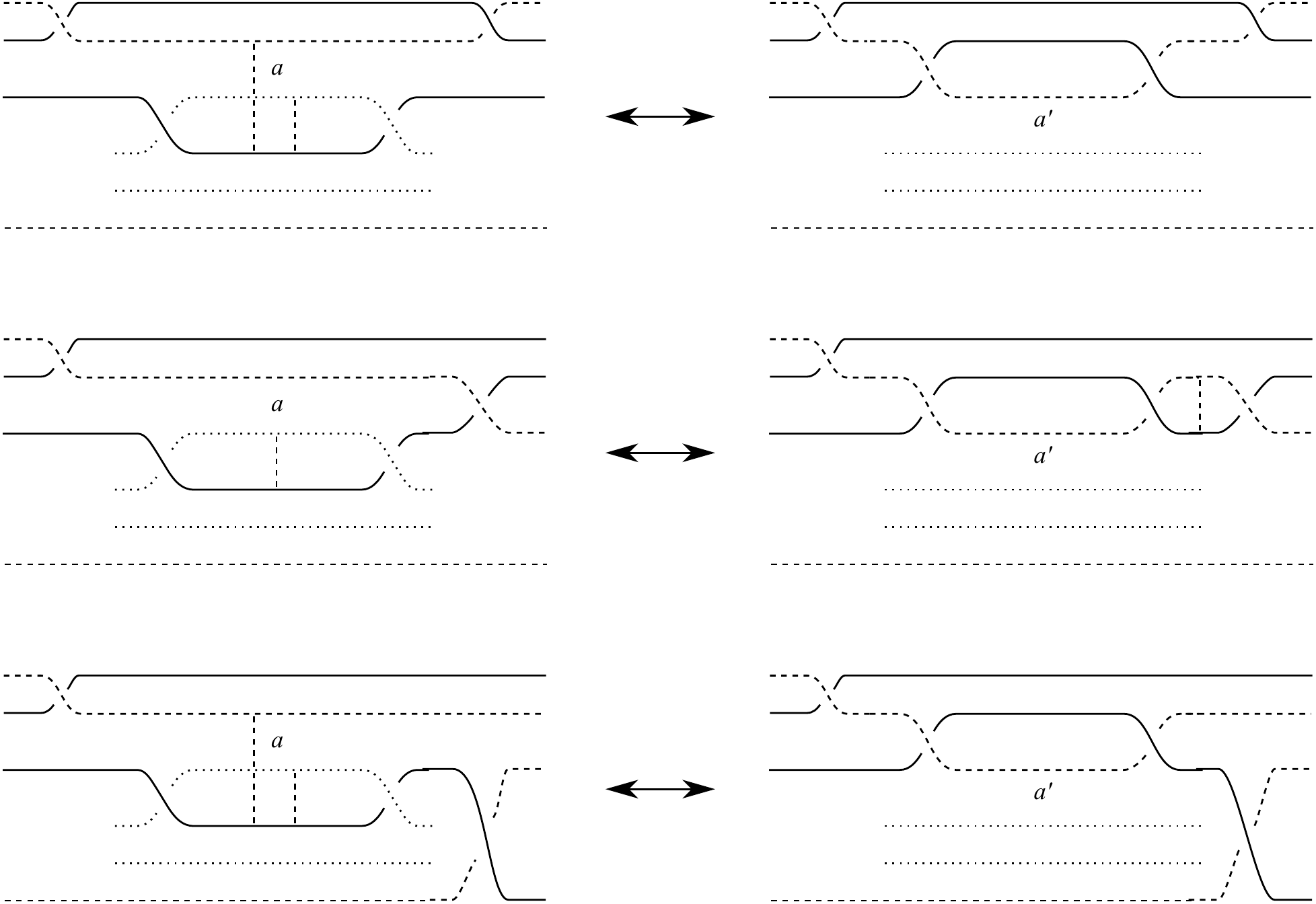}
	\end{center}
	\caption{Case 5 with clasps exhibited for each subcase.}
	\label{rcc5a}
\end{figure}

\begin{figure}
	\begin{center}
		\includegraphics[width=6.3in]{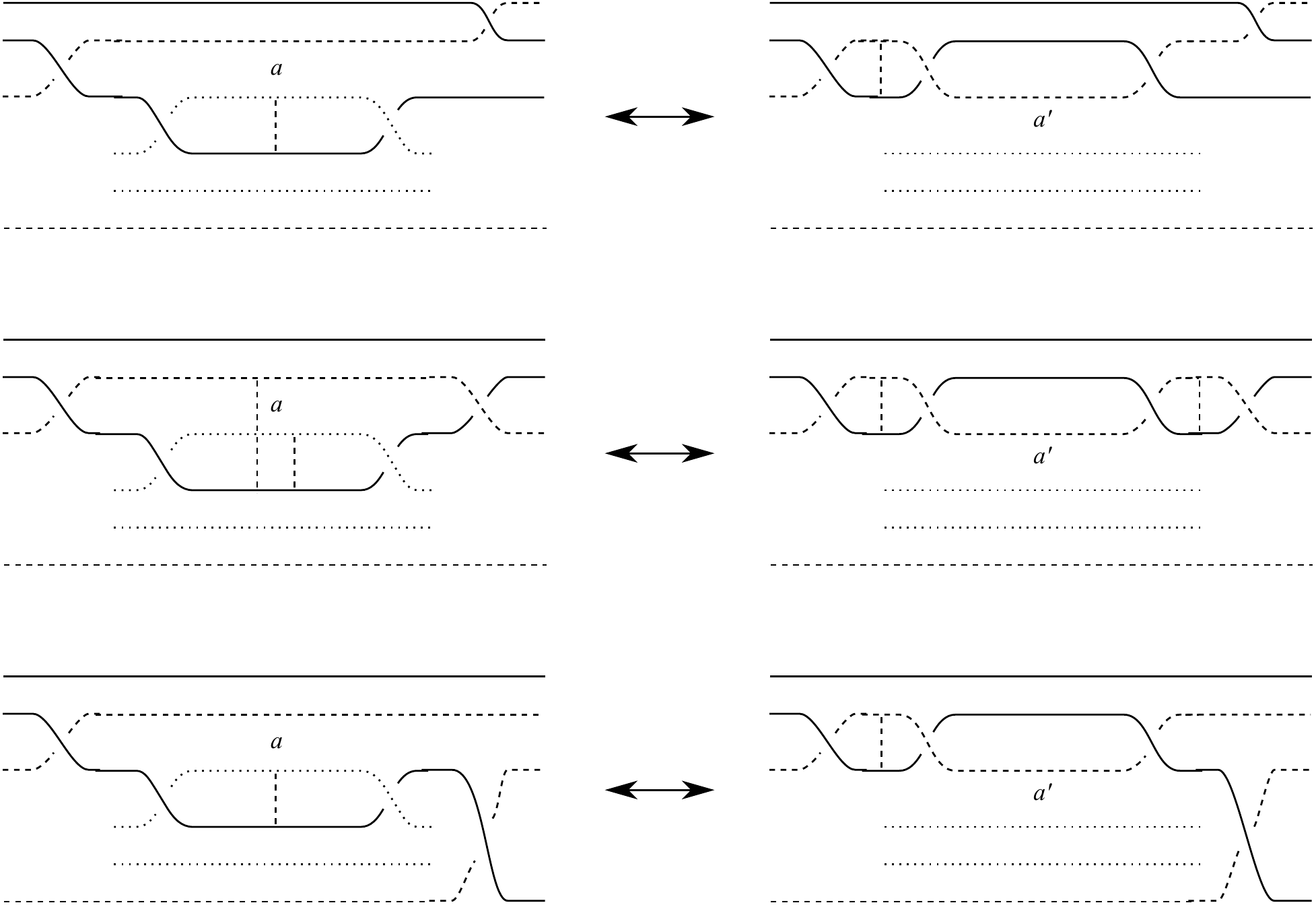}
	\end{center}
	\caption{Case 5 with clasps exhibited for each subcase (continue).}
	\label{rcc5b}
\end{figure}

\begin{figure}
	\begin{center}
		\includegraphics[width=6.3in]{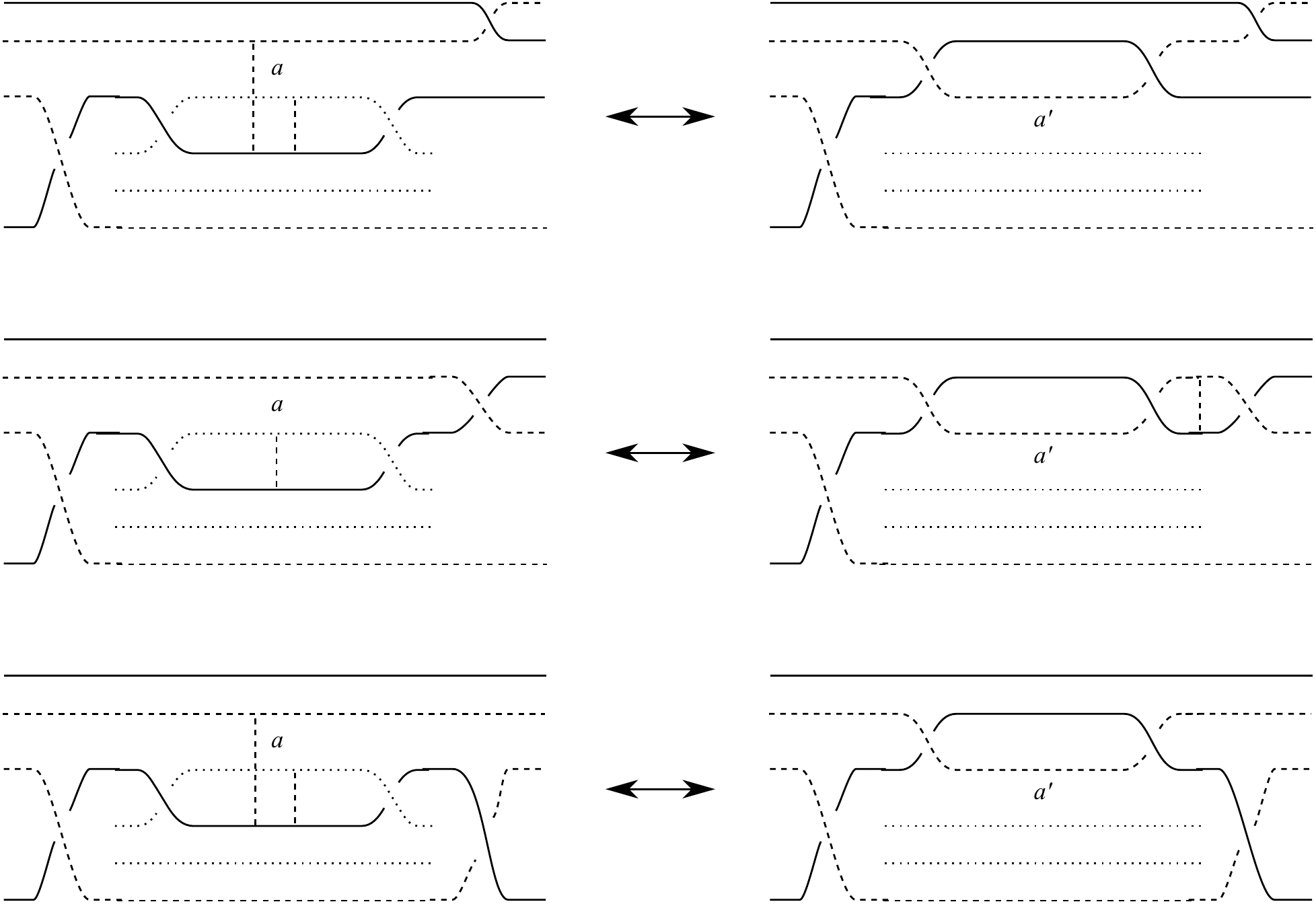}
	\end{center}
	\caption{Case 5 with clasps exhibited for each subcase (continue).}
	\label{rcc5c}
\end{figure}

\begin{figure}
	\begin{center}
		\includegraphics[width=6.3in]{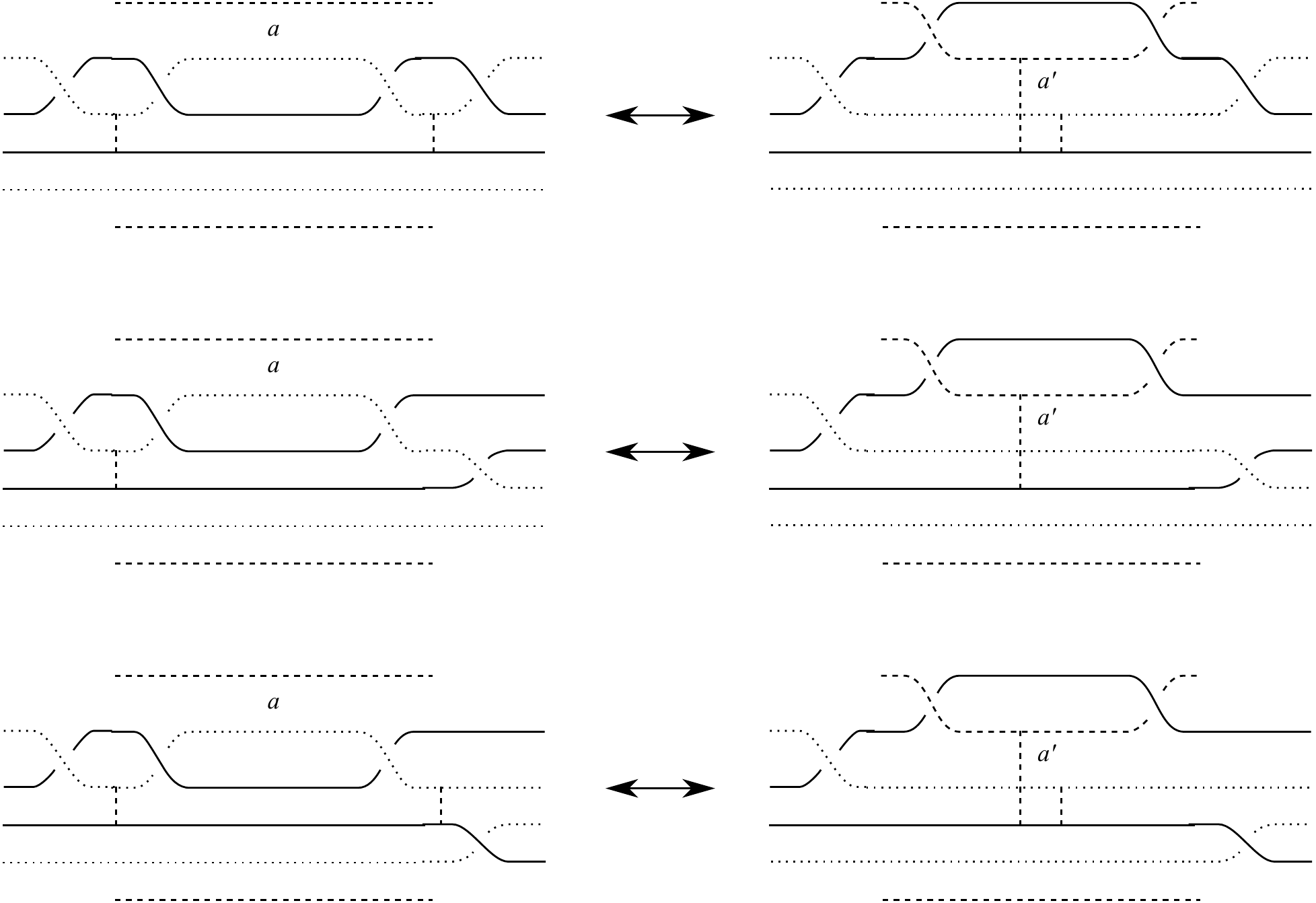}
	\end{center}
	\caption{Case 6 with clasps exhibited for each subcase.}
	\label{rcc6a}
\end{figure}

\begin{figure}
	\begin{center}
		\includegraphics[width=6.3in]{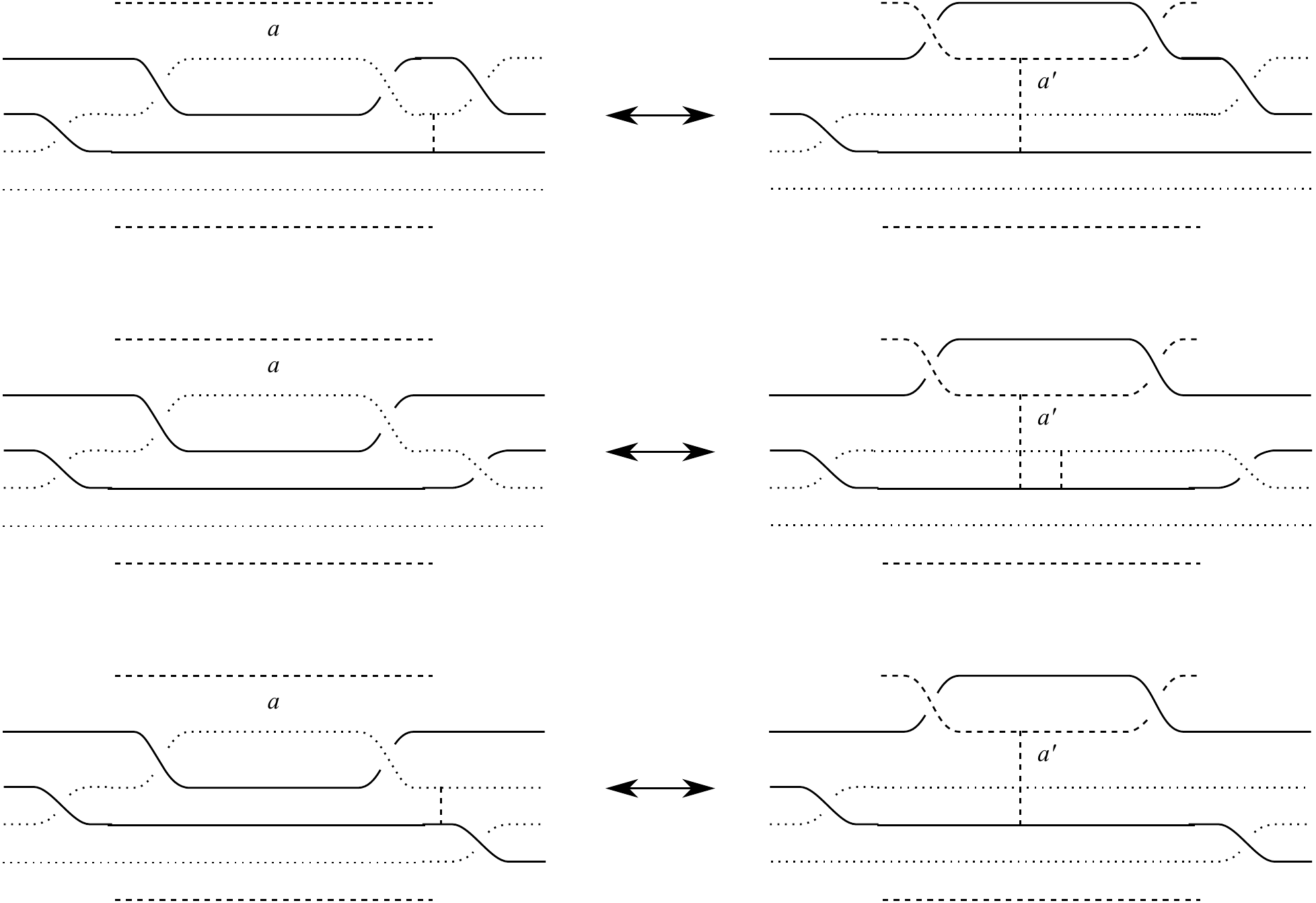}
	\end{center}
	\caption{Case 6 with clasps exhibited for each subcase (continue).}
	\label{rcc6b}
\end{figure}

\begin{figure}
	\begin{center}
		\includegraphics[width=6.3in]{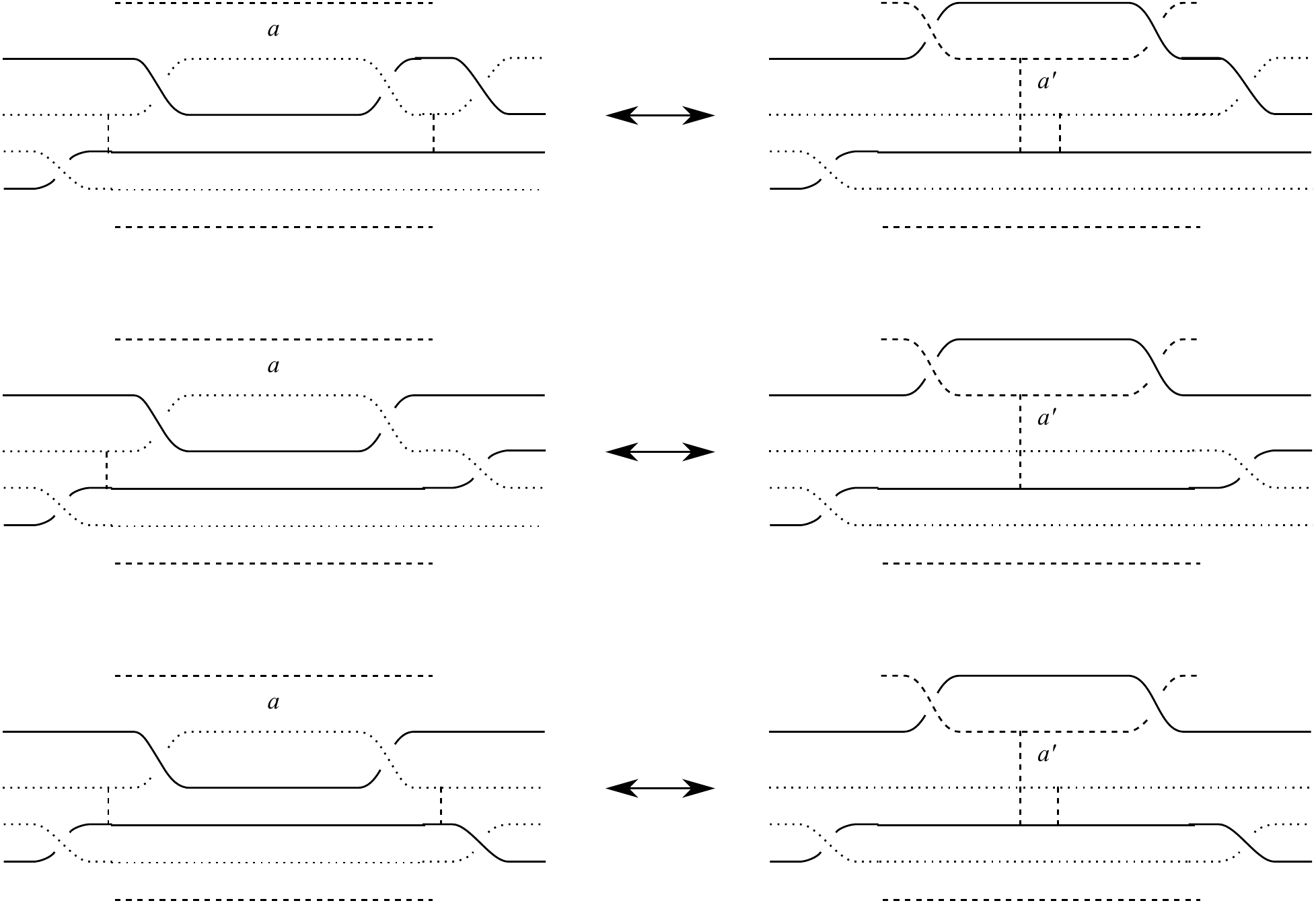}
	\end{center}
	\caption{Case 6 with clasps exhibited for each subcase (continue).}
	\label{rcc6c}
\end{figure}

\begin{figure}
	\begin{center}
		\includegraphics{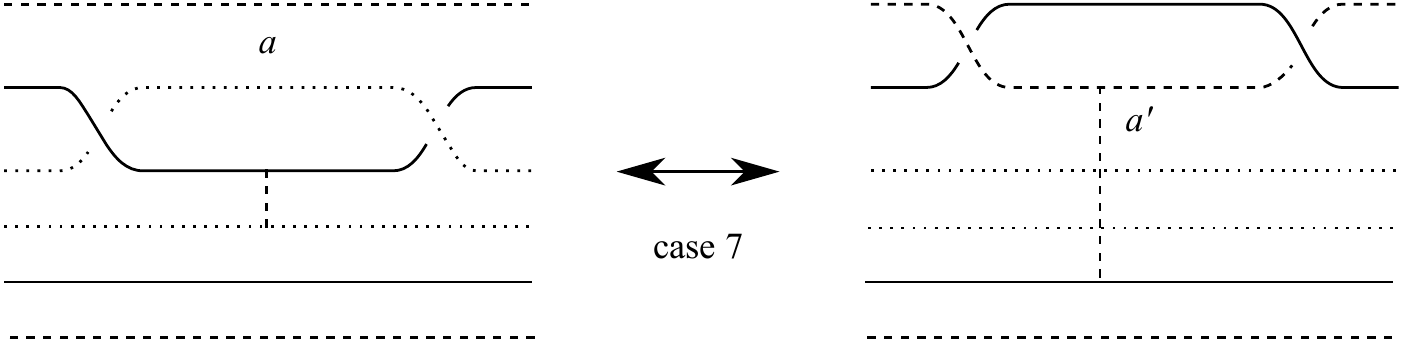}
	\end{center}
	\caption{Case 7 with its clasp exhibited.}
	\label{rcc7}
\end{figure}

\begin{figure}
	\begin{center}
		\includegraphics[width=6.3in]{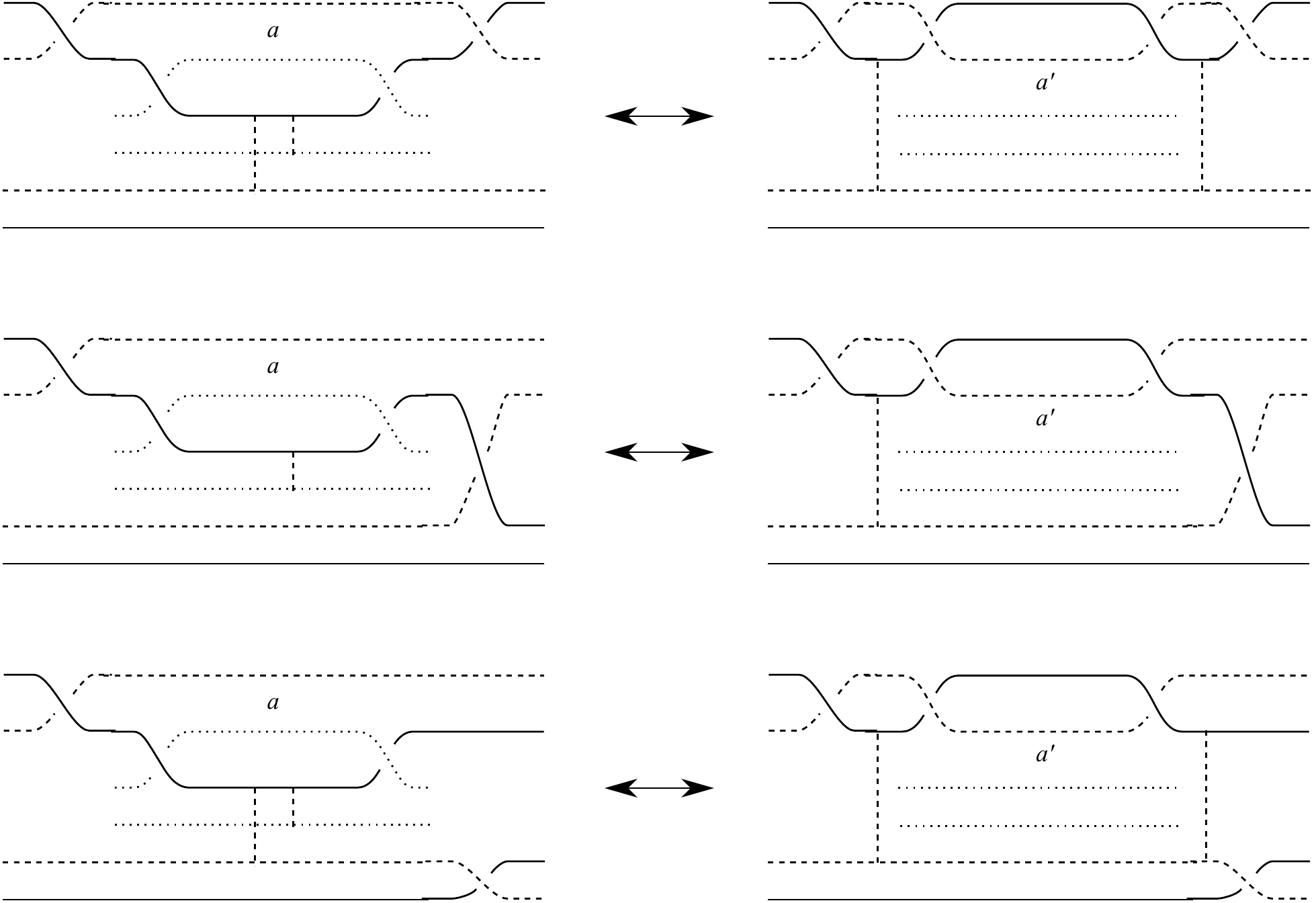}
	\end{center}
	\caption{Case 8 with clasps exhibited for each subcase.}
	\label{rcc8a}
\end{figure}

\begin{figure}
	\begin{center}
		\includegraphics[width=6.3in]{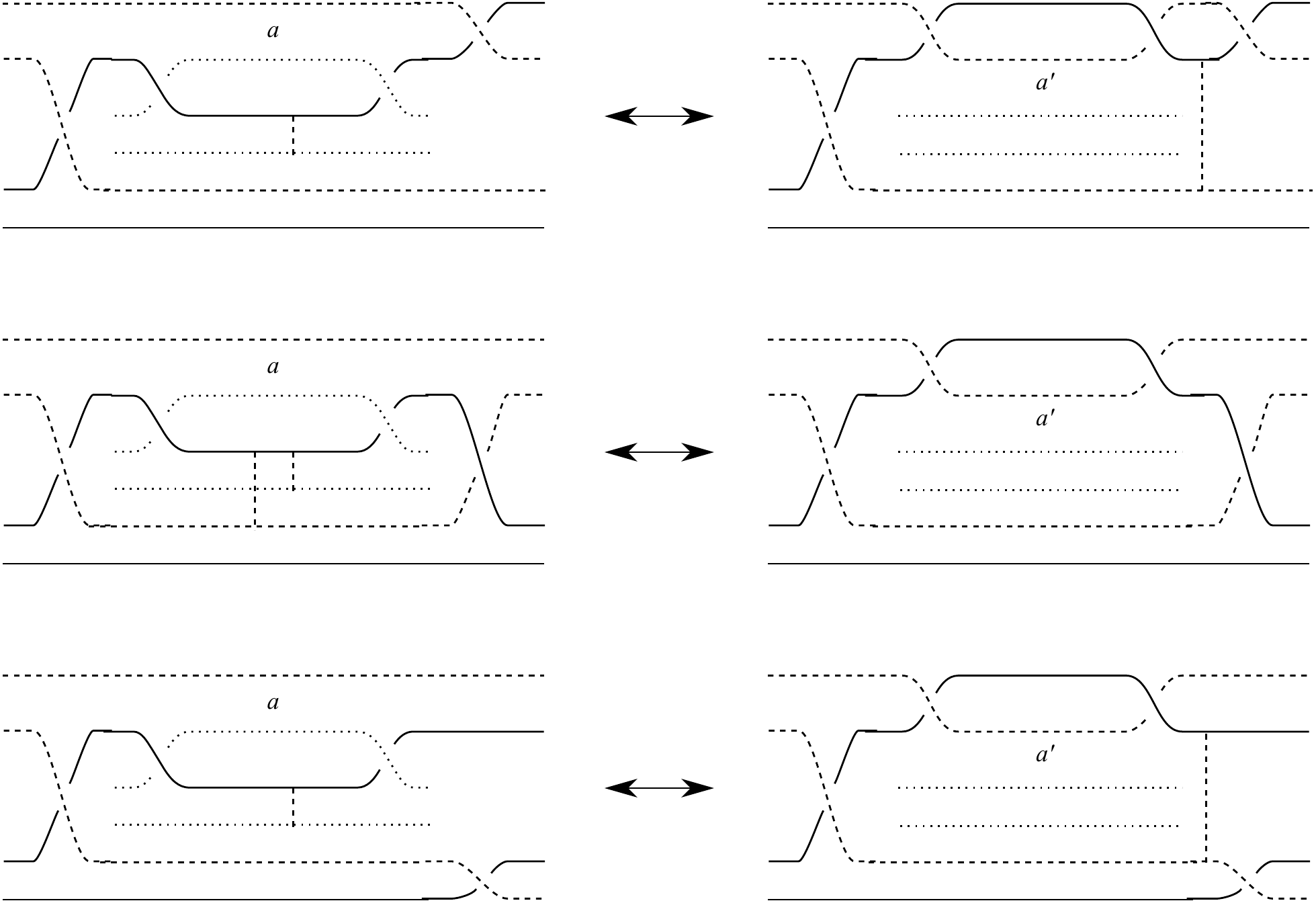}
	\end{center}
	\caption{Case 8 with clasps exhibited for each subcase (continue).}
	\label{rcc8b}
\end{figure}

\begin{figure}
	\begin{center}
		\includegraphics[width=6.3in]{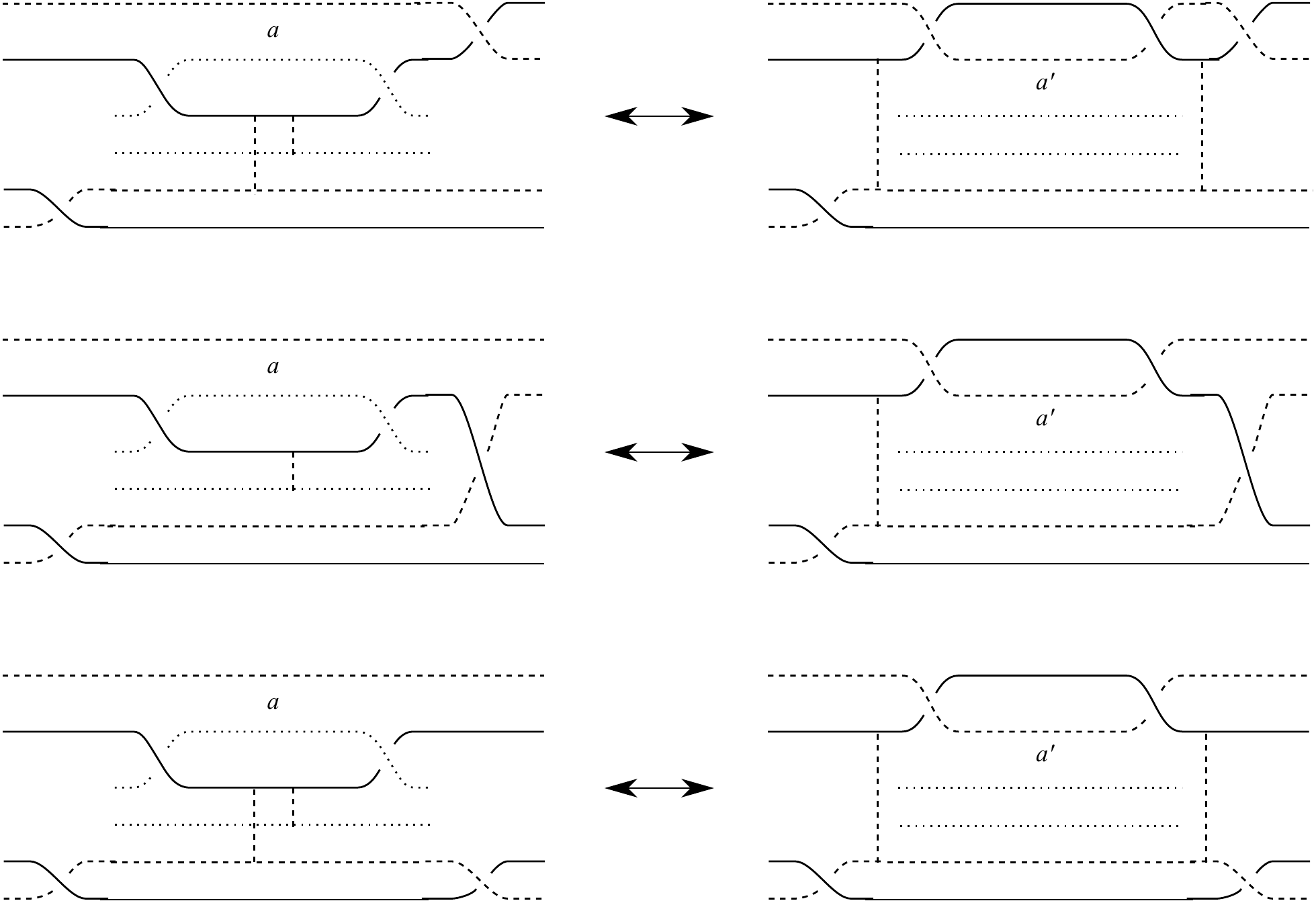}
	\end{center}
	\caption{Case 8 with clasps exhibited for each subcase (continue).}
	\label{rcc8c}
\end{figure}
\newpage
\textbf{3.5 Applications.}
As a consequence of Proposition \ref{p1} and Lemma \ref{l6}, we have our main theorem.

\begin{proof}[Proof of Theorem \ref{key}]
We prove by induction on the number of moves applied to $\emptyset$ to receive $K$. For base case, the first move must be 0-handle. This move clearly preserves the number of clasps, which is 0. Next, suppose that the statement is true for any sequence of moves with length $N$ or less. If the $(N+1)^{th}$ move is Legendrian isotopy, then the statement is true by Proposition \ref{p1} and Lemma \ref{l6}. Finally, if the $(N+1)^{th}$ move is 0-handle or 1-handle, then it preserves the number of clasps since the moves never create/destroy a block with non-nested component.
\end{proof}

In particular, we have the following corollaries.

\begin{Cor}
	\label{ckey}
Suppose a Legendrian link $K$ has a decomposable exact Lagrangian filling. Then $K$ must have at least 1 even normal ruling.
\end{Cor}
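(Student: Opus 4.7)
The plan is to derive Corollary \ref{ckey} as an immediate consequence of the two main results already established in the paper, namely Lemma \ref{l5} and Theorem \ref{key}. The logic is essentially a two-step chain: the existence of a decomposable exact Lagrangian filling produces a specific normal ruling on $K$, and this specific normal ruling is shown to be even.

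First, I would invoke Lemma \ref{l5}: since $K$ is assumed to admit a decomposable exact Lagrangian filling $L$, there is a canonical normal ruling $\rho_L$ of $K$ associated with $L$. Note that this step is what guarantees the set of normal rulings of $K$ is nonempty; in particular, we have a distinguished element $\rho_L$ to work with rather than an arbitrary normal ruling.

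Next, I would apply Theorem \ref{key} to this distinguished ruling $\rho_L$. The theorem precisely asserts that the normal ruling associated with a decomposable exact Lagrangian filling has an even number of clasps, so by the definition of parity given in Section 3.2, $\rho_L$ is an even normal ruling of $K$. Thus $K$ possesses at least one even normal ruling, which is the required conclusion.

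Since both Lemma \ref{l5} and Theorem \ref{key} have already been proved, no technical obstacle arises here; the only thing to verify is that the ruling produced by Lemma \ref{l5} is the same ruling to which Theorem \ref{key} applies, which is immediate since Theorem \ref{key} is phrased in terms of exactly that associated ruling. The corollary is therefore a direct packaging of the two results into a statement about the mere existence of an even ruling, which is the form most convenient for the obstruction applied to the $(4,-(2n+5))$-torus knots in Theorem \ref{counter}.
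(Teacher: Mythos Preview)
Your proof is correct and matches the paper's own argument, which simply states that by Theorem \ref{key} the associated normal ruling is even. Your explicit invocation of Lemma \ref{l5} just spells out what the paper leaves implicit in the phrase ``associated normal ruling.''
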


\begin{proof}
By Theorem \ref{key}, the associated normal ruling is even.
\end{proof}

Now, we may prove Theorem \ref{counter}.

\begin{proof}[Proof of Theorem \ref{counter}]
It is proved in \cite{Mep} that, for each $n \geq 0$, the $(4,-(2n+5))$-torus knot, as in Figure \ref{tgen}, has exactly 1 normal ruling, and the resolution of this normal ruling is illustrated in Figure \ref{tgenr}. Furthermore, this normal ruling has $2n+5$ clasps, as in Figure \ref{tgenrc}, so the knot does not have a decomposable exact Lagrangian filling by Corollary \ref{ckey}.
\end{proof}

\begin{figure}
	\begin{center}
		\includegraphics{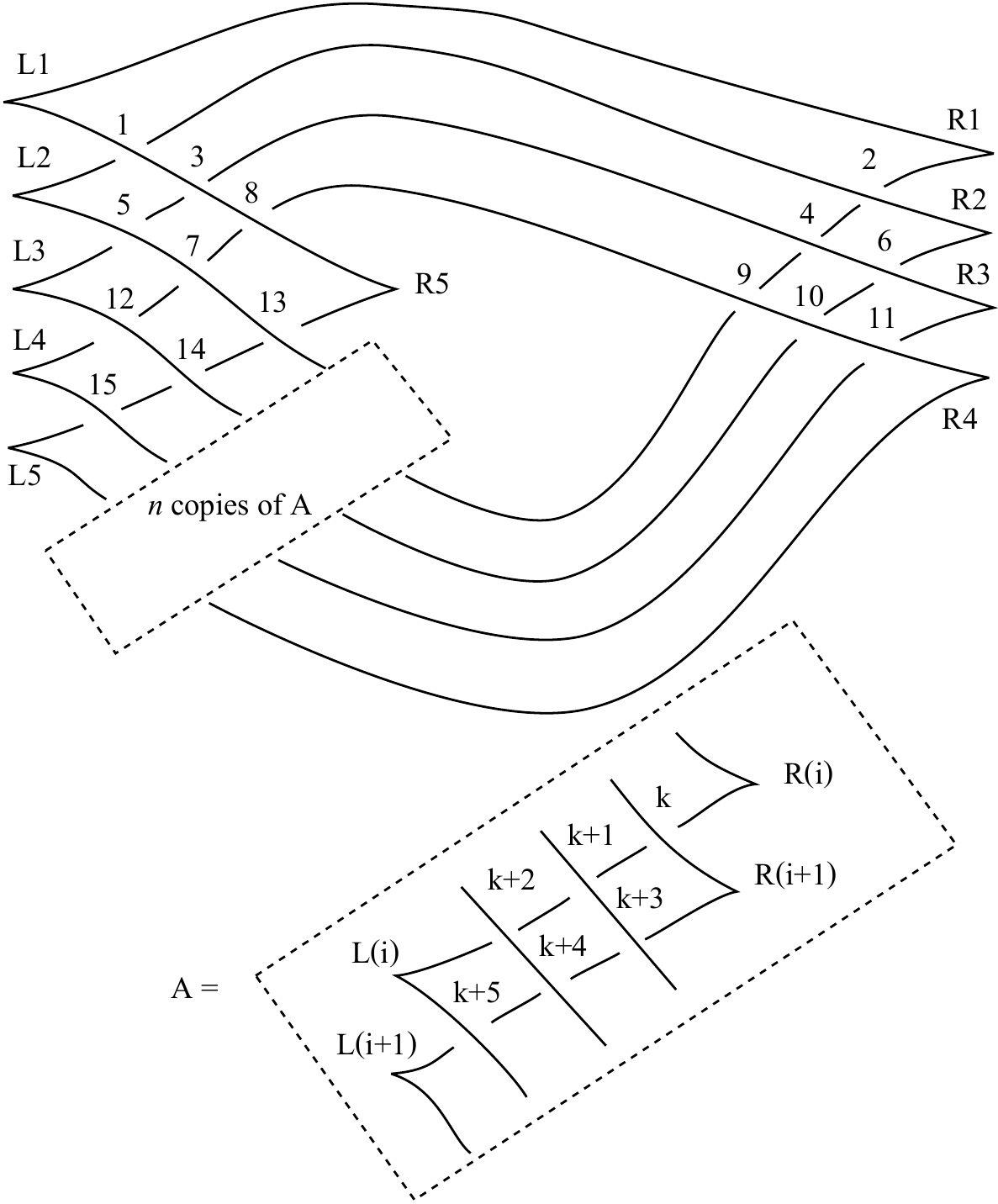}
	\end{center}
	\caption{The Legendrian $(4,-(2n+5))$-torus knot, $n \geq 0$.}
	\label{tgen}
\end{figure}

\begin{figure}
	\begin{center}
		\includegraphics{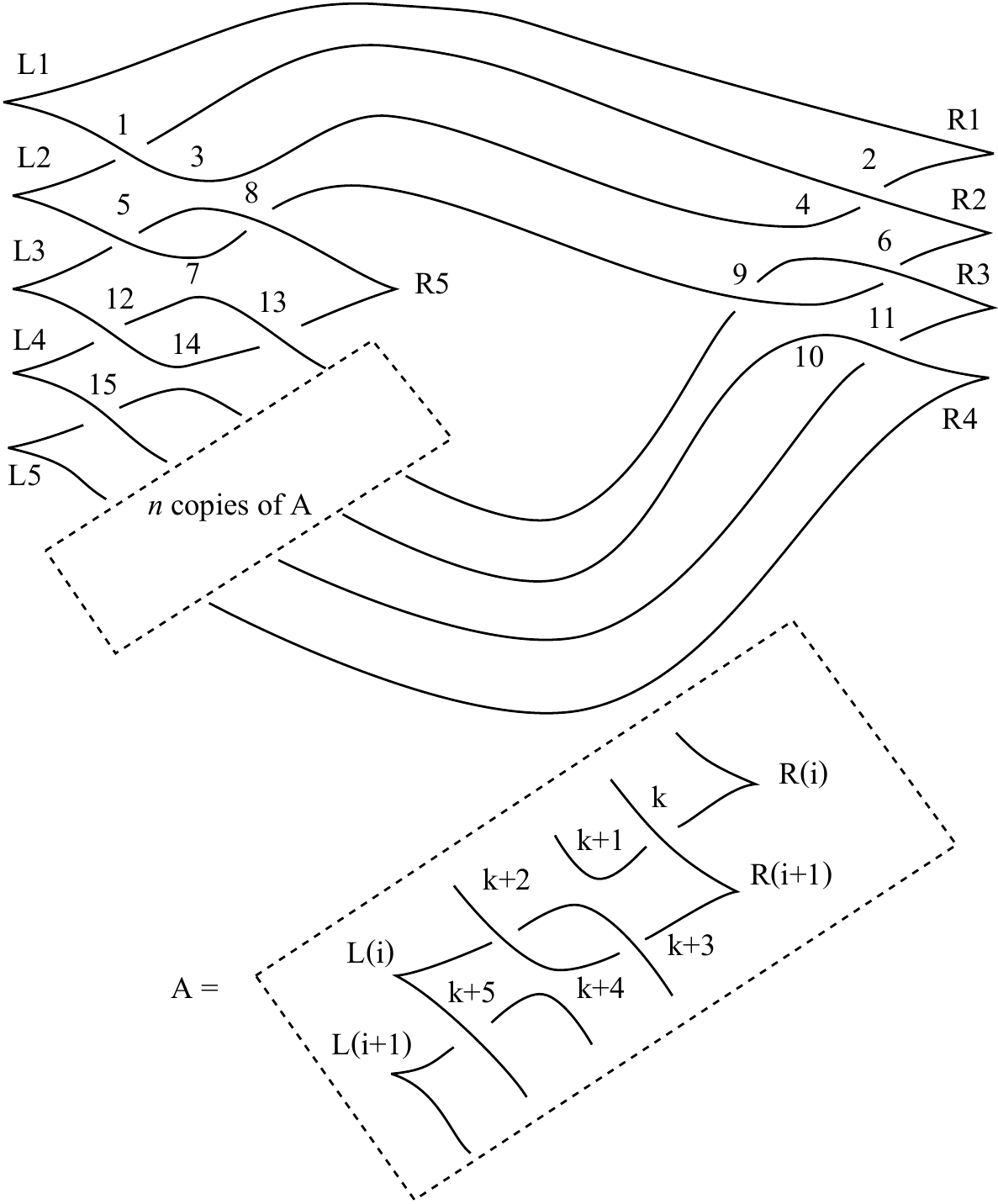}
	\end{center}
	\caption{The resolution of the only normal ruling of the Legendrian $(4,-(2n+5))$-torus knot, $n \geq 0$.}
	\label{tgenr}
\end{figure}

\begin{figure}
	\begin{center}
		\includegraphics{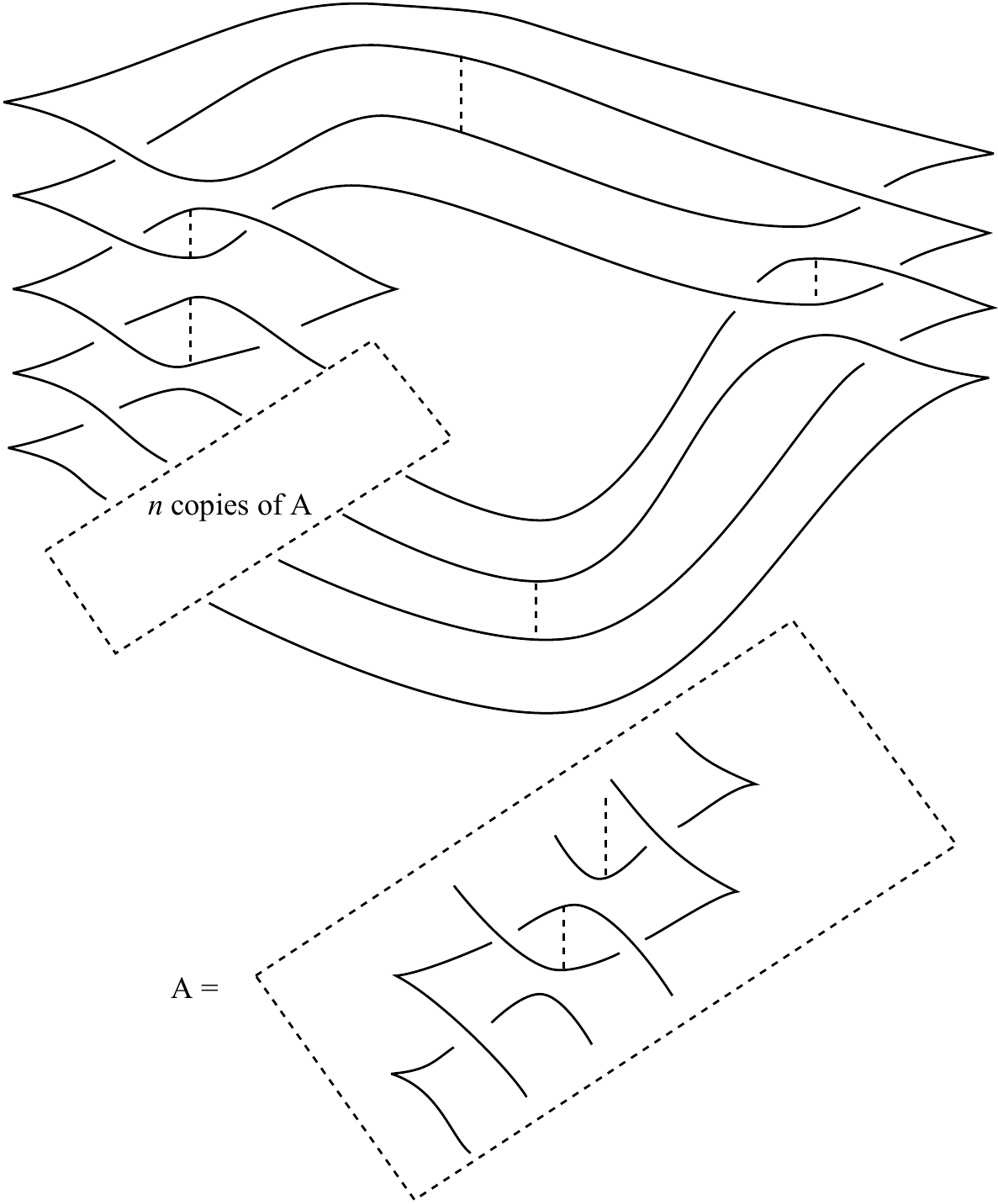}
	\end{center}
	\caption{The resolution of the only normal ruling of the Legendrian $(4,-(2n+5))$-torus knot, $n \geq 0$, and its clasps.}
	\label{tgenrc}
\end{figure}

Finally, we end this section with the following observations.

\begin{Cor}
Suppose we have two Legendrian links $K_+$ and $K_-$, both have exactly 1 normal ruling. If there is a decomposable exact Lagrangian cobordism from $K_-$ to $K_+$, then the parity of two normal rulings must be the same.
\end{Cor}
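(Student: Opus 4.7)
The plan is to mimic the inductive argument used for Theorem \ref{key}, but to start the induction from $K_-$ (equipped with its unique normal ruling) rather than from $\emptyset$. A decomposable exact Lagrangian cobordism from $K_-$ to $K_+$ is, by definition, witnessed by a finite sequence of moves from Theorem \ref{movie} taking the front diagram of $K_-$ to that of $K_+$; I will track how the normal ruling evolves along this sequence, and show that its parity is preserved at every step.

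First I would verify that the sequence of moves produces a normal ruling of $K_+$ starting from the given normal ruling $\rho_-$ of $K_-$. This is the same inductive argument used in the proof of Lemma \ref{l5}: Legendrian-isotopy steps transport normal rulings via the bijection of Theorem \ref{Chekanov}, while 0-handle and 1-handle steps preserve the set of switches and therefore send normal rulings to normal rulings. At the end of the sequence we obtain a normal ruling $\rho_+$ of $K_+$; since $K_+$ is assumed to have a unique normal ruling, $\rho_+$ must be that ruling. (Note that Lemma \ref{l5} as stated in the paper handles the case $K_- = \emptyset$, but the argument is purely local to each move and so works equally well starting from any link with a chosen normal ruling.)

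Next I would argue that the parity of the number of clasps is preserved along the sequence. For each Legendrian-isotopy step, Proposition \ref{p1} takes care of regular isotopies and Lemma \ref{l6} takes care of Reidemeister moves R1, R2, R3, so parity is invariant through these steps. For the handle steps, the proof of Theorem \ref{key} already observes that a 0-handle or a 1-handle move never creates or destroys a block with non-nested components, so the number of clasps (not merely its parity) is unchanged. Chaining these invariances together gives $\mathrm{parity}(\rho_-) = \mathrm{parity}(\rho_+)$, which is the desired conclusion.

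The only real subtlety I expect is checking that the handle-move argument from the proof of Theorem \ref{key} remains valid in this more general setting, where neither endpoint of the cobordism is empty and the intermediate diagrams may contain many eyes. Because the analysis of blocks in Section 3.1 is strictly local to the rectangle in which the handle is attached, and because the classification of blocks with non-nested components is combinatorial, the same case analysis applies unchanged to an arbitrary intermediate link; I would simply cite the relevant part of the proof of Theorem \ref{key} rather than redoing it.
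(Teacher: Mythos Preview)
Your proposal is correct and follows essentially the same approach as the paper: induct on the moves of the decomposable cobordism, invoke Proposition~\ref{p1} and Lemma~\ref{l6} for the isotopy steps and the proof of Theorem~\ref{key} for the handle steps, and then use the uniqueness of the normal ruling of $K_+$ to conclude. The paper's proof is simply a more condensed version of what you wrote.
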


\begin{proof}
By induction, we may only consider when $K_+$ is obtain from $K_-$ via a single move from Theorem \ref{movie}. By Proposition \ref{p1} and Lemma \ref{l6} and the proof of Theorem \ref{key}, the parity of the associated normal ruling of $K_+$ must be the same as the parity of the only normal ruling of $K_-$. Since $K_+$ has 1 normal ruling, it must be the associated one.
\end{proof}

\begin{Cor}
The number of odd normal rulings is invariant under Legendrian isotopy. The same is also true for even normal rulings.
\end{Cor}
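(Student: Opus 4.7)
The plan is to obtain this corollary as an almost immediate consequence of the machinery already built up in Section 3.4, namely Theorem \ref{Chekanov}, Proposition \ref{p1}, and Lemma \ref{l6}. By Theorem \ref{Chekanov}, any Legendrian isotopy between $K$ and $\widetilde{K}$ induces a bijection $\Phi$ between the set of normal rulings of $K$ and the set of normal rulings of $\widetilde{K}$. It therefore suffices to argue that $\Phi$ restricts to a bijection between the odd normal rulings of $K$ and the odd normal rulings of $\widetilde{K}$ (the even case follows symmetrically, or simply by taking complements).

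To carry this out, I would first reduce to the case of a single elementary move. As in the proof of Theorem \ref{Chekanov}, it is enough to consider regular isotopy, R1, R2, and R3; the bijection on rulings for a general Legendrian isotopy is then the composition of the bijections associated to each individual move. For a single move, Proposition \ref{p1} handles the regular isotopy case, and Lemma \ref{l6} handles each of R1, R2, R3. In every instance, the statement is that if $\rho$ corresponds to $\rho'$ under the move, then $\rho$ and $\rho'$ have the same parity.

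From here the conclusion is essentially bookkeeping: define $\Phi_{\mathrm{odd}}$ as the restriction of $\Phi$ to odd rulings. Since $\Phi$ preserves parity, $\Phi_{\mathrm{odd}}$ lands in the set of odd rulings of $\widetilde{K}$, and its inverse (which comes from the inverse sequence of moves, each of which also preserves parity) lands in the odd rulings of $K$. Hence $\Phi_{\mathrm{odd}}$ is a bijection and the two sets have the same cardinality. Exactly the same argument, replacing ``odd'' by ``even'' throughout, handles the second assertion.

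I do not anticipate a genuine obstacle here, since the technical content has already been absorbed into Proposition \ref{p1} and Lemma \ref{l6}. The only thing to be mildly careful about is that the bijection $\Phi$ from Theorem \ref{Chekanov} is defined as the composition of the per-move correspondences, so parity preservation must be invoked move by move along the chosen sequence of Reidemeister/regular-isotopy moves; once this is noted, the proof is a one-line application of the two results.
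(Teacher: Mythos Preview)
Your proposal is correct and matches the paper's own proof, which is the single line ``This is a result from Theorem \ref{Chekanov}, Proposition \ref{p1} and Lemma \ref{l6}.'' You have simply spelled out the bookkeeping that the paper leaves implicit.
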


\begin{proof}
This is a result from Theorem \ref{Chekanov}, Proposition \ref{p1} and Lemma \ref{l6}.
\end{proof}

For example, the right-handed trefoil (see Figure \ref{trefoill}) has 3 normal rulings, which are \{1\}, \{3\} and \{1,2,3\}. It is easy to check that both \{1\} and \{3\} have 1 clasp. Also, \{1,2,3\} has 0 clasp. So the knot has 2 odd normal rulings and 1 even normal ruling.

\begin{figure}
\begin{center}
\includegraphics[width=2in]{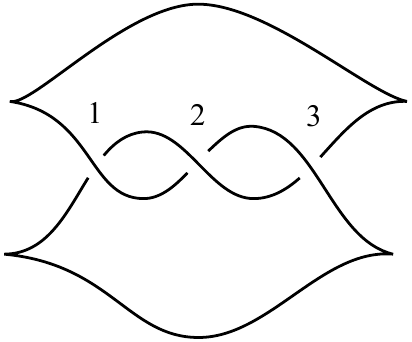}
\end{center}
\caption{The right-handed trefoil.}
\label{trefoill}
\vspace{2.5cm}
\end{figure}

\noindent 
Mathematics Department, Chiang Mai University\\
\textit{E-mail address}: atiponrat@hotmail.com

\end{document}